\def\rd{{\rm d}}
\newtheorem{theorem}{Theorem}
\newtheorem{definition}{Definition}
\newtheorem{assumption}{Assumption}
\newtheorem{proposition}{Proposition}
\newtheorem{remark}{Remark}
\newtheorem{lemma}{Lemma}
\newtheorem{corollary}{Corollary}
\newtheorem{example}{Example}
\patchcmd{\ttlh@hang}{\parindent\z@}{\parindent\z@\leavevmode}{}{}
\patchcmd{\ttlh@hang}{\noindent}{}{}{}
\begin{document}

\title{On the Posterior Distribution of a Random Process Conditioned on Empirical Frequencies of a Finite Path:\\
the i.i.d and finite Markov chain case}

\author{
Wenqing Hu\thanks{Department of Mathematics and Statistics, Missouri University of Science and Technology
(formerly University of Missouri, Rolla). Email: \texttt{huwen@mst.edu}
}
\   and   
Hong Qian\thanks{Department of Applied Mathematics, University of Washington. Email: \texttt{hqian@uw.edu}}
}

\date{}

\maketitle

\begin{abstract}
We obtain the posterior distribution of a random process conditioned on observing the empirical frequencies of a finite sample path. We find under a rather broad assumption on the ``dependence structure'' of the process, {\em c.f.} independence or Markovian, the posterior marginal distribution of the process at a given time index can be identified as certain empirical distribution computed from the observed empirical frequencies of the sample path. We
show that in both cases of discrete-valued i.i.d. sequence and finite Markov chain, a certain ``conditional symmetry" given by the observation of the empirical frequencies leads to the desired result on the posterior distribution. Results for both finite-time observations and its asymptotic infinite-time limit are connected via the idea of Gibbs conditioning. Finally, since our results demonstrate a central role of the empirical frequency in understanding the information content of data, we use the Large Deviations Principle (LDP) to construct a general notion of ``data-driven entropy", from which one can apply a formalism from the recent study of statistical thermodynamics to data.
\end{abstract}

\textit{Keywords}: Posterior Distribution, Empirical Frequency, Gibbs Conditioning, Large Deviations Principle, Statistical Thermodynamics.

\textit{2020 Mathematics Subject Classification Numbers}: 60J10, 60F10, 62C10, 62C12.

\section{Introduction}\label{Sec:Intro}

Ever since the axiomatic construction of A. N. Kolmogorov \cite{KolmogoroffFoundations}, Probability Theory always starts with a probability space $(\Omega, \mathcal{F}, \mathbf{P})$, where $\Omega$ is a sample space of elementary events, $\mathcal{F}$ is a $\sigma$-algebra on the sample space, and $\mathbf{P}$ is a probability measure given {\it a priori}.  A major concern when applying this paradigm to real-world problems, identified as the ``second-half of probability theory'' by E. T. Jaynes \cite{JaynesBook}, the pioneer of Maximum Entropy Principle, lies in that one usually does not know the probability measure in any realistic way.  As a matter of fact the $\mathbf{P}$ is not a scientific observable which should be a function on $\Omega$. This has not prevented some researchers relying on the ``frequentist" point of view and use observed empirical distribution as a substitution for the real probability. However, when one ``fits" data to a statistical model, the form of the unknown probability distribution necessarily enters as assumptions, hidden or explicitly.  This is most succinctly pointed out by P. J. Huber in \cite{HuberBook}:
\begin{quote}
{\em Statistical inferences are based only in part upon the observations. An equally important base is formed by prior assumptions about the underlying situation.  Even in the simplest cases, there are explicit or implicit assumptions about randomness and independence, about distributional models, perhaps prior distributions for some unknown parameters, and so on.
}
\end{quote}
The route of building statistical models from probability, as an integral part of data science, thus, should always start with the ``basic assumption of a probability space including a prior probability measure".  This realization led to the rise of Bayesian logic in which posterior conditional probability becomes a central object, while in many science and engineering, empirical frequency is still a legitimate surrogate for the probability. Inspired by the upcoming ``big data'' in the near future, the main concern of the present work is to secure a probability measure from empirical frequency data.  We are particularly motivated by the following question:

\begin{quote}
\noindent\textit{Given an observation of the empirical frequencies of a random process, to what extent can we recover the probability structure of the original random process via conditioning?} 
\end{quote}

Our main rationale for addressing the question lies as follows: Under a rather broad assumption of the dependence structure on the process, such as ``independence'' or ``Markovian'', we can identify the posterior marginal distribution of the process at a given time index as certain empirical distribution that is computable from the empirical frequencies of observed outcomes of the process, {\em e.g.} a finite sample path. In the simplest case for an i.i.d sequence, the distribution from the computation is the empirical frequencies of the outcomes. In general one considers ``sample path frequencies" admitted by the observed empirical frequencies of the outcomes, through an analysis of sample path trajectories. A summarizing conclusion in short: ``posterior probability distribution is given by some kind of empirical frequency based on observations". There is a quite related idea under the notion of de Finetti's {\em exchangeability} \cite{Diaconisbook}.  In particular Diaconis and Freedman \cite{diaconis-freedman} have constructed a symmetry based definition for the very notion of Markov dependence in a sequence.  Our work, however, studies the posterior conditional probability within the framework of a Markov process by exploring such a symmetry. Indeed, the results we demonstrate below can be easily extended to the exchangeble case since all we need is the symmetry provided by exchangebility.

In this work, we carry out the analysis for the two cases: (1) a sequence of i.i.d random variables taking discrete values, and (2) a sequence of random variables that follow a finite-state Markov chain. The former is included since some of the subtleties for the latter are more clearly exhibited.  In both cases, the fact that the posterior marginal distribution is given by the empirical distribution can be roughly understood as a result of ``conditional symmetry" for the joint distribution of the process under interest. To illustrate this simply:  Suppose we have observed the empirical frequencies of a sequence of $n$ i.i.d. random variables $X_1,....,X_n$ taking positive integer values. This implies that the number of $X$'s taking value $i\in \mathbb{N}$ is a known number of count $\nu_i\in \mathbb{Z}_+$, such that $\sum_{i\in \mathbb{N}}\nu_i=n$. When conditioned on the observation of only the quantities $\nu_i, i\in \mathbb{N}$, the random variable $X_\ell$ at a fixed index $1\leq \ell\leq n$ may take the value of any number in the unordered list (it may contain repeated elements) $[i_1,...,i_n]$ such that $\text{\#}\{\ell: i_\ell=i\}=\nu_i$ for any $i\in \mathbb{N}$. However, using the i.i.d. (exchangeable) property, it is intuitively true that $X_\ell$ may take each of the possible $i_\ell$'s with the same conditioned probability. This is a result of the fact that any sample trajectory $X_1,...,X_n$ of an i.i.d. sequence conditioned on the observed frequencies has the same joint distribution. Such a conditional symmetry leads to the fact that the observed empirical frequencies give the posterior marginal of $X_\ell$. 

While simple to understand, the situation is more involved when the i.i.d. sequence is replaced by a finite-state Markov chain. In this case, upon the observation of $n+1$ steps $Y_1,Y_2,...,Y_n,Y_{n+1}$ in the chain, we count the empirical frequencies of the occurrences of one-step transitions $X_1=(Y_1,Y_2),...,X_n=(Y_n,Y_{n+1})$. Given the frequencies of these transitions and fix the starting point of the Markov chain $Y$, we observe that the joint probability distribution of $X_1,...,X_n$ remains the same regardless of how we do admissible permutations of the occurrences of the one-step transitions. Here an admissible permutation means that the resulting sequence of the $X$'s will still form a trajectory of a Markov chain. Thus we obtained ``conditional symmetry" at the level of sample path trajectories. As a result, the posterior marginal distribution of $X_1$ upon observing the empirical frequencies of $X_1,...,X_n$, is given by the empirical distribution of admissible trajectories that match the observed empirical frequencies. This means that the posterior marginal distribution of $X_1$ is proportional to the number of all admissible trajectories of $X_1, ..., X_n$ that match the observed empirical frequencies. In this case, we can still understand the posterior distribution as some kind of ``empirical frequency", but it is at the level of sample path trajectories rather than the above direct observation of the $X$'s. 

Problems of similar nature, namely the calculation of posterior distributions of a random process conditioned on given observations, have been considered in previous works, see \cite{touchette_2015} and the references cited within. The key difference is that we consider finite-time sample path while \cite{touchette_2015} considered conditioning under large deviation principle (LDP), {\it e.g.} with infinite-time limit. To connect these two types of results, we investigate the infinite-time limit 
of our finite-time results.  By using combinatorial enumeration results known as Whittle's formula \cite{whittle1955, billingsly1961AnnStat}, we are able to show that for the finite Markov chain under additional ergodicity condition, in the asymptotic limit as the number of observations tends to infinity, the numbers of admissible trajectories starting from different one-step transitions are evenly distributed. This yields the fact that in the infinite-time limit, the ``conditional symmetry" at the level of sample paths for ergodic finite Markov chains is reduced to the ``conditional symmetry" at the level of the observations of one-step transitions. Therefore for an ergodic finite Markov chain, as the number of observations tends to infinity, the posterior distribution of the one-step transition tends to be given asymptotically by the empirical frequency of the observed one-step transitions, a result that aligns nicely with the i.i.d. case.  Notice that under the ergodicity assumption, the latter empirical frequency also converges to the actual transition probabilities multiplied by the stationary measure. In fact, such an asymptotic result can also be obtained from the general principle of \textit{Gibbs conditioning} (see Sec. 3.3 of \cite{DemboZeitouniLDPBook} and Sec. 7.3 of \cite{IEEEConditioning}). However, our approach is more systematic and reveals more clearly how different levels of symmetry at play in obtaining the limit of the posterior distribution, and it is expected to be generalizable to broader classes of random processes. 

Our result can be interpreted more conceptually from a measure-theoretic point of view since the conditional symmetry that we revealed here is simply a result of the product structure of the underlying probability measure defining the process. We expect that a general principle should hold beyond the cases we can rigorously demonstrate here, that a ``conditional symmetry" leads to the procedure of using empirical distributions calculated from observed frequencies to stand for posterior marginal distributions. This principle is expected to be also applicable in understanding the posterior marginal distribution of continuous-time Markov processes, for which we leave the exact formulation of the result and its proof to a future time. 

As was pointed out in \cite{EAMSNotices}, there have been two different paradigms for doing scientific research: the Keplerian paradigm, which is the data-driven approach, and the Newtonian paradigm, which is considered the first-principle-based approach. The natural question that arises with the rapid advancement of current data science is to understand how much information one can extract from large collection of data. Essentially, the largest amount of information we can extract from data is the maximum capability of doing science under the Keplerian paradigm. Our work thus can be regarded as providing an understanding to this question at least from one perspective. The underlying fact is: Given only the frequency observations of a random process, a conditional symmetry occurs to the posterior distribution, and no more additional information can be extracted from frequency observations. This illustrates the importance of empirical frequency observations when working under the Keplerian paradigm. Based on this, one can further apply the Large Deviations Principle \cite{DemboZeitouniLDPBook,FWbook} to the empirical frequencies either at the level of random variable or at the level of sample paths. Following this approach, as we shall illustrate, one can achieve a natural integration of the {\em frequentist school} and the {\em Bayesian school}, with the concept of {\em entropy} emerging at the center stage. The entropy concept here, however, is much more general and broader than that of Gibbs, which appears in statistical thermodynamics, and Shannon's which defines current information theory. It can be regarded as an abstract ``data-driven" entropy from which we can apply the formalism of thermodynamics to data sciences. For asymptotic infinitely large samples and in the context of LDP, this is the essential idea of Gibbs conditioning. 

The paper is organized as follows: In Section \ref{Sec:IID} we derive the results when the process $X_1,...,X_n$ consists of an i.i.d. sequence; in Section \ref{Sec:MarkovChain} we consider the finite Markov chain case; in Section \ref{Sec:InfiniteTimeLimitMarkovChain} we study the infinite-time limit of the results in Sec. \ref{Sec:MarkovChain}; in Section \ref{Sec:discussions} we discuss a novel understanding of the mathematical content of statistical thermodynamics, a banch of theoretical physics, in terms of the large deviations theory and Koopman-Damois exponential family of models. This gives the paradigm of Maximum Entropy Principle and a deep connection to an overlooked theory developed by L. Szilard \cite{szilard} and B. B. Mandelbrot \cite{mandelbrot_1962}.  We further propose some conjectures and provide an outlook for future research.

\section{Conditioned on sample frequency: The i.i.d case}\label{Sec:IID}

Let $X_1$, ..., $X_n$, ... be an i.i.d sequence defined on the probability space $(\Omega, \mathcal{F}, \mathbf{P})$ with common distribution as a random variable $X$ taking values in $\mathbb{N}$. Given a sequence of sample frequencies $\nu_k\in \mathbb{N}_+$ satisfying 
$$\sum_{k\in\mathbb{N}} \nu_k = n \ , $$
we consider the event
\begin{equation}\label{Eq:IIDSampleFrequencyEvent}
\mathcal{E}_{\{\nu_k\} } = \left\{ \sum_{\ell=1}^n \mathbbm{1}_k(X_\ell) = \nu_k, k\in\mathbb{N} \right\},
\end{equation}
where $\mathbbm{1}_k(X_\ell)=\left\{\begin{array}{ll}1 \ , & \text{ if } X_{\ell}=k \ ,
\\
0 \ , & \text{ otherwise .}
\end{array}\right.$ Thus $\mathcal{E}_{\{\nu_k\}}$ stands for the event that the trajectory $X_\ell, \ell=1,2,...,n$ takes on value $k$ with frequency $\nu_k$, $k\in \mathbb{N}$, respectively. The event $\mathcal{E}_{\{\nu_k\}}$ can be viewed as the ``observation" of the trajectory up to time $\ell=n$. Conditioned on this event, we claim the following

\begin{theorem}[posterior distribution for the i.i.d. case]\label{Thm:IIDPosteriorEqualsPrior}
Given $m\in \mathbb{N}$ and any $1\leq \ell \leq n$, we have
\begin{equation}\label{Thm:IIDPosteriorEqualsPrior:Eq}
\mathbf{P}\big(X_{\ell}=m \big|\mathcal{E}_{\{\nu_k\}}\big) = \frac{\nu_m}{n} \ .
\end{equation}
\end{theorem}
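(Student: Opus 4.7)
The natural approach is to exploit the symmetry provided by the i.i.d.\ (and hence exchangeable) structure of $X_1,\dots,X_n$. The key observation is that the event $\mathcal{E}_{\{\nu_k\}}$ is invariant under arbitrary permutations of the indices $1,\dots,n$, since it depends on $(X_1,\dots,X_n)$ only through the unordered multiset of values. Combining this with the fact that the joint law of $(X_1,\dots,X_n)$ coincides with that of $(X_{\sigma(1)},\dots,X_{\sigma(n)})$ for every permutation $\sigma$ of $\{1,\dots,n\}$, I would conclude that $\mathbf{P}\big(X_{\ell}=m\big|\mathcal{E}_{\{\nu_k\}}\big)$ does not depend on $\ell$; call this common value $q_m$.

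Once the $\ell$-independence is established, the theorem follows from a one-line averaging argument. On the event $\mathcal{E}_{\{\nu_k\}}$, the sum $\sum_{\ell=1}^{n}\mathbbm{1}_m(X_\ell)$ is deterministically equal to $\nu_m$, so by linearity of conditional expectation,
$$
n\,q_m \;=\; \sum_{\ell=1}^{n}\mathbf{P}\big(X_{\ell}=m\big|\mathcal{E}_{\{\nu_k\}}\big) \;=\; \mathbf{E}\left[\sum_{\ell=1}^{n}\mathbbm{1}_m(X_\ell)\,\bigg|\,\mathcal{E}_{\{\nu_k\}}\right] \;=\; \nu_m,
$$
which immediately yields $q_m=\nu_m/n$, as claimed.

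As an independent sanity check, I would redo the computation by explicit enumeration. Every sequence $(i_1,\dots,i_n)\in\mathbb{N}^n$ whose value multiplicities match $\{\nu_k\}$ has the same joint probability $\prod_k p_k^{\nu_k}$, where $p_k=\mathbf{P}(X=k)$. Hence the conditional probability $\mathbf{P}\big(X_{\ell}=m\big|\mathcal{E}_{\{\nu_k\}}\big)$ reduces to the purely combinatorial ratio of the number of admissible sequences with $i_\ell=m$ to the total number of admissible sequences, which is $\binom{n-1}{\nu_1,\dots,\nu_m-1,\dots}\big/\binom{n}{\nu_1,\dots,\nu_m,\dots}=\nu_m/n$. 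This second route has the advantage of making explicit that the conclusion does not depend on the underlying marginal law $p_k$ at all.

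I do not anticipate any genuine obstacle. The only minor points to handle are: (i) implicitly restricting to frequency vectors $\{\nu_k\}$ with $\mathbf{P}(\mathcal{E}_{\{\nu_k\}})>0$ so that the conditioning is well defined, and (ii) the degenerate case $\nu_m=0$, in which both sides of \eqref{Thm:IIDPosteriorEqualsPrior:Eq} vanish trivially. The more substantive conceptual content, worth flagging in the write-up, is that exchangeability, rather than full independence, is what actually drives the argument; this is precisely the symmetry that will need to be recast at the sample-path level in the Markov chain sections that follow.
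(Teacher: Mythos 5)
Your proposal is correct, and your primary argument is a genuinely different (and shorter) route than the paper's. The paper's main proof lifts the conditional measure $\mathbf{P}(\bullet\,|\,\mathcal{E}_{\{\nu_k\}})$ to a measure $\mathscr{P}$ on a space of sequences of ``distinguished'' elements $(k,a)$ and then carries out an explicit count (its Facts 1--5); its alternate proof in Remark \ref{Remark:AlternateProof:Prop:IIDPosteriorEqualsPrior} is a direct binomial-coefficient computation, which is essentially your second, enumeration-based argument. Your first argument bypasses all counting: permutation invariance of both the joint law and the event $\mathcal{E}_{\{\nu_k\}}$ gives $\ell$-independence of $q_m=\mathbf{P}(X_\ell=m\,|\,\mathcal{E}_{\{\nu_k\}})$, and then the identity $\sum_{\ell=1}^{n}\mathbbm{1}_m(X_\ell)=\nu_m$ on the conditioning event yields $nq_m=\nu_m$ by linearity of conditional expectation. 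What the averaging trick buys is brevity and the explicit isolation of exchangeability as the only hypothesis used; what the paper's heavier machinery buys is that it is engineered to transfer to the Markov chain case, where your shortcut breaks down in an instructive way: the identity $\sum_{\ell}\mathbf{P}(X_\ell=(i,j)\,|\,\mathcal{E}_{\{\nu_{(i,j)}\}})=\nu_{(i,j)}$ still holds there, but the summands are no longer equal across $\ell$ because only ``admissible'' permutations preserve the chain structure, so one cannot divide by $n$. That broken symmetry is exactly what the paper's Facts 4 versus 4$'$ are set up to quantify, and it is why Theorem \ref{Thm:Prop:MarkovChainPosteriorEqualsPrior} involves the path counts $\text{\#}^{(i,j)}_1$ rather than raw frequencies. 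Your closing remark that exchangeability rather than independence drives the i.i.d.\ result is accurate and is acknowledged by the authors in the introduction.
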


\begin{proof}
Given $\mathcal{E}_{\{\nu_k\}}$, for any $k\in \mathbb{N}$, we know that among each $X_\ell$ in the sequence $X_1,...,X_n$, $\ell=1,...,n$, the number $k$ occurs at a multiple of $\nu_k$ times. Since each $\nu_k\geq 0$ is a non-negative integer, and $\sum\limits_{k\in \mathbb{N}}\nu_k=n$, we see that there are only finitely many $k$'s such that $\nu_k\geq 1$, and for all the rest of $k\in \mathbb{N}$ we have $\nu_k=0$. We order the $k$'s such that $\nu_k\geq 1$ in an increasing sequence as $1\leq k_1<k_2<...<k_I<\infty$ for some finite $I\in \mathbb{N}$. Consider the set of $n$ \textit{distinguished} elements
\begin{equation}\label{Thm:IIDPosteriorEqualsPrior:Eq:DistinguishedOccurances}
\mathcal{X}^{\text{distinguished}}\equiv \bigcup_{i=1}^I\{(k_i,1),...,(k_i,\nu_{k_i})\} \ .
\end{equation} 
Let us randomly pick each element in $\mathcal{X}^{\text{distinguished}}$ once and without replacement, so that we can establish $n!$ sequences of length $n$ with each sequence $\mathscr{S}$ consists of ordered elements of the form $(k,a)$ for some $k\in \{k_1,...,k_I\}$ and some $1\leq a\leq \nu_k$. An element-wise projection $\mathfrak{Y}$ with $(k,a)\stackrel{\mathfrak{Y}}{\rightarrow} k$ applied to each of the above sequence results in an outcome of the sequence $X_1,...,X_n$ conditioned on $\mathcal{E}_{\{\nu_k\}}$.  We construct a probability measure $\mathscr{P}$ on the space of all such sequences $\mathscr{S}$, such that for each sequence we have
\begin{equation}\label{Thm:IIDPosteriorEqualsPrior:Eq:ProbabilityMeasureOnDistinguishedSequences:Symmetry}
\mathscr{P}(\mathscr{S})=C
\prod_{i=1}^I \left(\mathbf{P}(X=k_i)\right)^{\nu_{k_i}}
\end{equation} 
for an undetermined normalizing constant $C>0$. For a given outcome $(i_1,...,i_n)$ of the sequence $X_1,...,X_n$, such that when counting frequencies, the sequence $(i_1,...,i_n)$ satisfies $\mathcal{E}_{\{\nu_k\}}$, we must have 
$\{X_1=i_1,...,X_n=i_n\}\subset \mathcal{E}_{\{\nu_k\}}$. Therefore for such sequences we have $\mathbf{P}(X_1=i_1,...,X_n=i_n, \mathcal{E}_{\{\nu_k\}})=\mathbf{P}(X_1=i_1,...,X_n=i_n)=\displaystyle{\prod_{i=1}^I \left(\mathbf{P}(X=k_i)\right)^{\nu_{k_i}}}$. Thus
we have 
\begin{equation}\label{Thm:IIDPosteriorEqualsPrior:Eq:ProbabilityMeasureOnOutcomeSequenceGivenFrequencies}
\mathbf{P}(X_1=i_1,...,X_n=i_n|\mathcal{E}_{\{\nu_k\}})=D\displaystyle{\prod_{i=1}^I \left(\mathbf{P}(X=k_i)\right)^{\nu_{k_i}}} \ ,
\end{equation}
where $D>0$ is an undetermined normalizing constant for the conditional probability measure $\mathbf{P}(\bullet|\mathcal{E}_{\{\nu_k\}})$. Comparing (\ref{Thm:IIDPosteriorEqualsPrior:Eq:ProbabilityMeasureOnDistinguishedSequences:Symmetry}) and (\ref{Thm:IIDPosteriorEqualsPrior:Eq:ProbabilityMeasureOnOutcomeSequenceGivenFrequencies}), we see that for any given 
 outcome $(i_1,...,i_n)$ of the sequence $X_1,...,X_n$ satisfying $\mathcal{E}_{\{\nu_k\}}$ we must have

\begin{equation}\label{Thm:IIDPosteriorEqualsPrior:Eq:ProjectionProbabilityIdentity}
\mathbf{P}(X_1=i_1,...,X_n=i_n|\mathcal{E}_{\{\nu_k\}})=K\mathscr{P}(\mathscr{S}) \ ,
\end{equation}
for some constant $K>0$. 

Combining (\ref{Thm:IIDPosteriorEqualsPrior:Eq:ProbabilityMeasureOnDistinguishedSequences:Symmetry}), (\ref{Thm:IIDPosteriorEqualsPrior:Eq:ProbabilityMeasureOnOutcomeSequenceGivenFrequencies}) and (\ref{Thm:IIDPosteriorEqualsPrior:Eq:ProjectionProbabilityIdentity}), we see the following two facts

\begin{itemize}
\item[Fact 1:] For each of the $n!$ different sequences $\mathscr{S}$, $\mathscr{P}(\mathscr{S})$ has the same value.
\item[Fact 2:] The sequence $\mathscr{S}$ in the RHS of (\ref{Thm:IIDPosteriorEqualsPrior:Eq:ProjectionProbabilityIdentity}) can be an arbitrary length-$n$ sequence picked from the $n!$ possible choices.
\end{itemize}

For each sequence $(i_1,...,i_n)$, we collect all possible sequences $\mathscr{S}$ such that $\mathfrak{Y}(\mathscr{S})=(i_1,...,i_n)$. We claim that we have 

\begin{equation}\label{Thm:IIDPosteriorEqualsPrior:Eq:ProjectionProbabilityIdentityUnionS}
\mathbf{P}(X_1=i_1,...,X_n=i_n|\mathcal{E}_{\{\nu_k\}})=\mathscr{P}\left(\text{all possible }\mathscr{S} \text{ such that } \mathfrak{Y}(\mathscr{S})=(i_1,...,i_n)\right) \ .
\end{equation}

This is because due to the above Fact 1, we have
$$\begin{array}{ll}
& \mathscr{P}\left(\text{all possible }\mathscr{S} \text{ such that } \mathfrak{Y}(\mathscr{S})=(i_1,...,i_n)\right)
\\
= & \left(\text{Number of all possible } \mathscr{S} \text{ such that } \mathfrak{Y}(\mathscr{S})=(i_1,...,i_n)\right)\cdot \mathscr{P}(\mathscr{S}) \ .
\end{array}$$

Now we note that by simple combinatorics we have

\begin{itemize}
\item[Fact 3:] For each realization $(i_1,...,i_n)$ of $X_1,...,X_n$,
$$\left(\text{Number of all possible } \mathscr{S} \text{ such that } \mathfrak{Y}(\mathscr{S})=(i_1,...,i_n)\right)$$
is independent of the choice of $(i_1,...,i_n)$\footnote{This number is actually $\nu_{k_1}\!! \, \nu_{k_2}\!! \, ... \, \nu_{k_I}\!!$ \ .}. 
\end{itemize}

Therefore by Fact 3 we see that (\ref{Thm:IIDPosteriorEqualsPrior:Eq:ProjectionProbabilityIdentityUnionS}) is equivalent to 

$$\mathbf{P}(X_1=i_1,...,X_n=i_n|\mathcal{E}_{\{\nu_k\}})=K_1\mathscr{P}(\mathscr{S}) \ .$$
It is then easy to see that $K_1=K$ just by normalization of the probability measure $\mathscr{P}(\bullet)$ and the conditional probability measure $\mathbf{P}(\bullet|\mathcal{E}_{\{\nu_k\}})$, as well as the above Fact 2. So we proved that (\ref{Thm:IIDPosteriorEqualsPrior:Eq:ProjectionProbabilityIdentityUnionS}) is valid.

From (\ref{Thm:IIDPosteriorEqualsPrior:Eq:ProjectionProbabilityIdentityUnionS}) we know that for an element $m\in \{k_1,...,k_I\}$ we have

\begin{equation}\label{Thm:IIDPosteriorEqualsPrior:Eq:PosterialMarginalDecomposition:Step:Counting}
\mathbf{P}(X_\ell=m|\mathcal{E}_{\{\nu_k\}})
= \sum\limits_{a=1}^{\nu_m} \mathscr{P}\left(\text{all possible }\mathscr{S} \text{ whose } \ell\text{-th element is } (m,a)\right) \ .
\end{equation}

We claim that for each element $(m,a)\in \mathcal{X}^{\text{distinguished}}$ we also have

\begin{itemize}
\item[Fact 4:] For each $(m,a)\in \mathcal{X}^{\text{distinguished}}$, 
$$\left(\text{Number of all possible } \mathscr{S} \text{ whose } \ell \text{-th element is } (m,a)\right)$$
is independent of $(m,a)$ \footnote{This number is actually $(n-1)!$ \ .}. 
\item[Fact 5:] $\mathscr{P}\left(\text{all possible }\mathscr{S} \text{ whose } \ell\text{-th element is } (m,a)\right)\stackrel{\text{def}}{=}p$ is independent of $(m,a)$.
\end{itemize}

The above Fact 4 is a simple combinatorial observation, and Fact 5 is a consequence of Facts 1 and 4. Since $\mathcal{X}^{\text{distinguished}}$ contains $n$ elements, by Fact 5 we know that $np=1$, i.e., $p=\dfrac{1}{n}$. This fact combined with (\ref{Thm:IIDPosteriorEqualsPrior:Eq:PosterialMarginalDecomposition:Step:Counting}) give us (\ref{Thm:IIDPosteriorEqualsPrior:Eq}). Note that when $m\not\in \{k_1,...,k_I\}$ we have $\nu_m=0$ and (\ref{Thm:IIDPosteriorEqualsPrior:Eq}) is trivial. So we have proved the whole statement. 
\end{proof}

\begin{remark}[Alternate proof of Theorem \ref{Thm:IIDPosteriorEqualsPrior}]\rm\label{Remark:AlternateProof:Prop:IIDPosteriorEqualsPrior}
One can establish a less intuitive but more direct combinatorial proof of Theorem \ref{Thm:IIDPosteriorEqualsPrior} as follows.
\begin{proof}[Alternate Proof of Theorem \ref{Thm:IIDPosteriorEqualsPrior}]
Given $\mathcal{E}_{\{\nu_k\}}$, the occurrences of $X_{\ell}=m$ happen on an arbitrary length-$\nu_m$ sub-index sequence $1\leq \ell_1< ... <\ell_{\nu_m}\leq n$ such that $\ell=\ell_1, ...., \ell_{\nu_m}$. 

Consider the event
$$\mathcal{E}^m_{(\ell_1,...,\ell_{\nu_m})}\equiv \{X_{\ell_1}=...=X_{\ell_{\nu_m}}=m \text{ for } 1\leq \ell_1< ... <\ell_{\nu_m}\leq n \text{ and } X_\ell\neq m \text{ for } \ell\neq \ell_1,...,\ell_m\} \ .$$
Then for two distinct sequences $(\ell_1,...,\ell_{\nu_m})\neq (\widetilde{\ell}_1,...,\widetilde{\ell}_{\nu_m})$ we must have $\mathcal{E}^m_{(\ell_1,...,\ell_{\nu_m})}\cap \mathcal{E}^m_{(\widetilde{\ell}_1,...,\widetilde{\ell}_{\nu_m})}=\emptyset$.

Let us also define the event 
$$\check{\mathcal{E}}_{\{\nu_k\}}^{m,\nu_m}=\left\{\sum_{\ell=1}^{n-\nu_m} \mathbbm{1}_k(X_\ell) = \nu_k, k\in\mathbb{N}\backslash\{m\}\right\} \ .$$

Define $\mathbf{P}\left(\mathcal{E}^m_{(\ell_1,...,\ell_{\nu_m})}\right)=p$, then it is easy from the above that 
$$\mathbf{P}\left(\mathcal{E}_{\{\nu_k\}}\right)= {n\choose \nu_m}p \cdot \mathbf{P}\left(\check{\mathcal{E}}_{\{\nu_k\}}^{m,\nu_m}\right) \ ,$$

$$\mathbf{P}\left(X_{\ell}=m \ , \ \mathcal{E}_{\{\nu_k\}}\right)= {n-1\choose \nu_m-1}p \cdot \mathbf{P}\left(\check{\mathcal{E}}_{\{\nu_k\}}^{m,\nu_m}\right) \ .$$

Thus this implies that $$\mathbf{P}\left(X_\ell=m|\mathcal{E}_{\{\nu_k\}}\right)=\dfrac{{n-1\choose \nu_m-1}}{{n \choose \nu_m}}=\dfrac{\nu_m}{n} \ , $$ which is \eqref{Thm:IIDPosteriorEqualsPrior:Eq}.
\end{proof}
\end{remark}

\begin{remark}[Conditional Symmetry]\rm\label{Remark:IIDProofIllustratesConditionalSymmetry}
In the above proof, we have extended the conditional probability $\mathbf{P}(\bullet|\mathcal{E}_{\{\nu_k\}})$ on the space of all outcome sequences of $X_1,...,X_n$ that match the frequency outcome $\mathcal{E}_{\{\nu_k\}}$, to the probability $\mathscr{P}(\bullet)$ on an ``lifted" probability space of sequences $\mathscr{S}$. In this correspondence, the Facts 1,2,3 lead to (\ref{Thm:IIDPosteriorEqualsPrior:Eq:PosterialMarginalDecomposition:Step:Counting}), which states that the conditional probability under our interest is equal to an absolute probability given by $\mathscr{P}$. This further helps us to understand that the ``conditional symmetry" is represented by an ``absolute symmetry", exactly stated as the Fact 1 in the above proof.

Actually, even without lifting the conditional probability measure $\mathbf{P}(\bullet|\mathcal{E}_{\{\nu_k\}})$ to the absolute probability measure $\mathscr{P}(\bullet)$, the conditional symmetry can still be easily seen from (\ref{Thm:IIDPosteriorEqualsPrior:Eq:ProbabilityMeasureOnOutcomeSequenceGivenFrequencies}), which is a result of i.i.d property. By (\ref{Thm:IIDPosteriorEqualsPrior:Eq:ProbabilityMeasureOnOutcomeSequenceGivenFrequencies}), we know that the joint probability of $X_1,...,X_n$ conditioned on $\mathcal{E}_{\{\nu_k\}}$ will remain the same regardless of how we place the outcomes $i_1,...,i_n$. 

The reason why we would like to lift the original conditional probability to the absolute probability $\mathscr{P}$ lies behind our stated Facts 4 and 5 in the above proof. Here we would like to explore another type of symmetry: the ``combinatorial symmetry" that comes from direct counting. By Fact 4, such kind of symmetry asserts that the number of certain combinatorial configurations are the same regardless of specific constraints. This enables us to directly obtain the fact that the empirical frequency of observed outcomes is the posterior marginal distribution. We will see that such ``combinatorial symmetry" may be \emph{broken} in the finite Markov chain case.

Our strategy of proof, although a bit more complicated than a direct combinatorial proof (see Remark \ref{Remark:AlternateProof:Prop:IIDPosteriorEqualsPrior}), reveals the more general symmetric structure of the problem.
\end{remark}

\begin{remark}\rm\label{Remark:IIDPosteriorBreakIndependence}
It can be easily seen using the same argument as in the proof of Theorem 
\ref{Thm:IIDPosteriorEqualsPrior} that the joint conditional distribution 
$$\mathbf{P}(X_{\ell_1}=m_1, X_{\ell_2}=m_2|\mathcal{E}_{\{\nu_k\}})=\dfrac{\nu_{m_1}\nu_{m_2}}{n(n-1)} \ .$$
This together with Theorem \ref{Thm:IIDPosteriorEqualsPrior} imply that $X_{\ell_1}$ and $X_{\ell_2}$ are not conditionally independent. In a heuristic explanation, the conditioning puts a ``nonlinear constraint" on the joint distributions which makes them dependent.
\end{remark}

\section{Conditioned on sample frequency: The finite Markov chain case}\label{Sec:MarkovChain}

Let $Y_1, ..., Y_n, ...$ be a time-homogeneous Markov chain with finite state space $\Sigma=\{1,...,N\}$, $|\Sigma|=N$. Let the transition probability matrix of the process $\{Y_\ell\}_{\ell\geq 1}$ be given by $P=(p_{ij})_{1\leq i, j\leq N}$.
Assume the process starts from an initial probability distribution $\pi^0=(\pi^0_1,...,\pi^0_N)$, $0\leq \pi^0_i\leq 1$, $\sum\limits_{i=1}^N \pi^0_i=1$, such that $\mathbf{P}(Y_1=i)=\pi^0_i$. 

Define the ``consecutive pair" process $X_\ell=(Y_\ell, Y_{\ell+1}), \ell \geq 1$. It is easy to see that $\{X_\ell\}_{\ell\geq 1}$ is also a Markov process with transition probability matrix $$P^{(2)}=(p^{(2)}_{(i,j),(k,l)})_{1\leq i,j,k,l\leq N} \ ,$$ and the matrix elements are computed from $p_{ij}$ via the formula (see \cite[Section 3.1.13]{DemboZeitouniLDPBook})
$p^{(2)}_{(i,j),(k,l)}=\mathbbm{1}_{j=k}\cdot p_{kl}$.

Let us first look more carefully at the procedure by which we transfer a trajectory of $\{Y_\ell\}_{\ell\geq 1}$ to $\{X_\ell\}_{\ell\geq 1}$. The process $\{Y_\ell\}_{\ell=1}^{n+1}$ has induced a measure $\mu$ on $\Sigma^{n+1}$, where each length-$(n+1)$ string $i_1,...,i_n,i_{n+1}$ is measured as
\begin{equation}\label{Eq:pi}
\mu(i_1,...,i_n,i_{n+1})=\pi^0_{i_1}\prod\limits_{\ell=1}^{n} p_{i_\ell i_{\ell+1}} \ .
\end{equation}
The above measure $\mu$ induces a new measure $\mu^{(2)}$ on $(\Sigma\times\Sigma)^n$ such that for each length-$n$ string of the pairs $(i_1,i_2),(i_2,i_3),...,(i_n,i_{n+1})$ we have
\begin{equation}\label{Eq:pi2-nonzero}
\mu^{(2)}((i_1,i_2),(i_2,i_3),...,(i_n,i_{n+1}))=\mu(i_1,...,i_n,i_{n+1})=\pi^0_{i_1}\prod\limits_{\ell=1}^{n} p_{i_\ell i_{\ell+1}} \ .
\end{equation}
Moreover, for any length-$n$ string of the pairs $(i_{11},i_{12}),(i_{21},i_{22}),...,(i_{n1},i_{n2})$ such that there exist some $i_{\ell2}\neq i_{(\ell+1)1}$ for some $1\leq \ell \leq n-1$, we have 
\begin{equation}\label{Eq:pi2-zero}
\mu^{(2)}((i_{11},i_{12}),(i_{21},i_{22}),...,(i_{n1},i_{n2}))=0 \ .
\end{equation}
For future presentation, we introduce the following definition.

\begin{definition}[string of chain type]\label{Def:StringOfChainType}
We denote a length-$n$ string of the pairs 
$$((i_{11},i_{12}),(i_{21},i_{22}),...,(i_{n1},i_{n2}))$$ 
such that $i_{12}=i_{21}, ..., i_{(n-1)2}=i_{n1}$ as a ``\emph{string of chain type}". 
For such a string of chain type, we denote by $i_{11}$ its ``\emph{head}".
\end{definition}

Thus (\ref{Eq:pi2-nonzero}) and (\ref{Eq:pi2-zero}) are saying that $\mu^{(2)}$ only charges on strings of chain type.
Let the sample space for the trajectory $X_1,...,X_n$ be given by $(\Omega=(\Sigma\times\Sigma)^n, \mathcal{F}, \mathbf{P})$, then it is easy to see that $\mu^{(2)}$ is the probability measure for the trajectory $X_1,...,X_n$, i.e., we have
\begin{equation}\label{Prop:MarkovChainPosteriorEqualsPrior:Eq:pi2isP}
\mathbf{P}(X_1=(i_{11},i_{12}),X_2=(i_{21},i_{22}),...,X_n=(i_{n1},i_{n2}))=\mu^{(2)}((i_{11},i_{12}),(i_{21},i_{22}),...,(i_{n1},i_{n2})) \ .
\end{equation}

Given a sequence of sample frequencies $\nu_{(i,j)}\in \mathbb{N}_+$ satisfying 

$$\sum\limits_{i=1}^N \sum\limits_{j=1}^N \nu_{(i,j)}=n \ ,$$
we consider the event
\begin{equation}\label{Eq:MarkovChainSampleFrequencyEvent}
\mathcal{E}_{\{\nu_{(i,j)}\} } = \left\{\sum_{\ell=1}^n \mathbbm{1}_{(i,j)}(X_\ell) = \nu_{(i,j)}, 1\leq i, j\leq N \right\},
\end{equation}
where $\mathbbm{1}_{(i,j)}(X_\ell)=\left\{\begin{array}{ll}1 \ , & \text{ if } X_{\ell}=(i,j) \ ,
\\
0 \ , & \text{ otherwise .}
\end{array}\right.$ Thus $\mathcal{E}_{\{\nu_{(i,j)}\}}$ stands for the event that the trajectory $X_\ell \ (\ell=1,...,n)$ takes on value $(i,j)$ with frequency $\nu_{(i,j)}$, $1\leq i,j\leq N$, respectively. The event $\mathcal{E}_{\{\nu_{(i,j)}\}}$ can be viewed as the ``observation" of the trajectory of $X_\ell$ up to time $\ell=n$. 

Let us suppose that we have observed an outcome of the event $\mathcal{E}_{\{\nu_{(i,j)}\}}$. Then we claim that the Markov chain $Y_\ell$ for a fixed index $1\leq \ell \leq n$ \textit{cannot} be an arbitrary element chosen from the state space $\Sigma=\{1,...,N\}$. This can be seen from the following example.

\begin{example}\rm\label{Example:X1NotArbitrary}
Suppose $\{Y_\ell\}_{\ell\geq 1}$ is a stationary Markov chain with a $3$-element state space $\{1,2,3\}$ and stationary measure $\pi=(\frac{1}{3}, \frac{1}{3}, \frac{1}{3})$.
Set $n=2$ and suppose we have observed 
$$\mathcal{E}_{\{\nu_{(i,j)}\}}=\left\{\nu_{(1,2)}=\nu_{(2,3)}=1, \nu_{(i,j)}=0 \text{ for all other pairs of } (i,j)\right\} \ .$$
Then it is easy to see that $\mathbf{P}(Y_1=1|\mathcal{E}_{\{\nu_{(i,j)}\}})=1$ while $\mathbf{P}(Y_1=2|\mathcal{E}_{\{\nu_{(i,j)}\}})=\mathbf{P}(Y_1=3|\mathcal{E}_{\{\nu_{(i,j)}\}})=0$. This indicates that conditioning on the observed frequencies $\mathcal{E}_{\{\nu_{(i,j)}\}}$ may break stationarity and pick specific possible choices of $Y_\ell \ (1\leq \ell \leq n)$ based on the observed frequencies $\mathcal{E}_{\{\nu_{(i,j)}\}}$.  
\end{example}

Due to the above example, we introduce ``conditional admissible states" for $Y_\ell$ as the following

\begin{definition}[conditional admissible states]\label{Def:ConditionalAdmissibleStates}
Given the stationary Markov chain $\{Y_\ell\}_{\ell=1}^{n+1}$ with state space $\Sigma=\{1,2,...,N\}$
and the observed sample frequencies $\mathcal{E}_{\{\nu_{(i,j)}\}}$ defined in (\ref{Eq:MarkovChainSampleFrequencyEvent}),
we define the ``conditional admissible states" $\Sigma^{\checkmark}(\ell|\mathcal{E}_{\{\nu_{(i,j)}\}})$ 
for $Y_\ell \ (1\leq \ell \leq n)$ 
as the set of all possible choices of $1\leq i\leq N$ such that $\mathbf{P}(Y_{\ell}=i|\mathcal{E}_{\{\nu_{(i,j)}\}})>0$. Thus
$$\Sigma^{\checkmark}(\ell|\mathcal{E}_{\{\nu_{(i,j)}\}})=\left\{i: 1\leq i\leq N \ , \ \mathbf{P}(Y_{\ell}=i|\mathcal{E}_{\{\nu_{(i,j)}\}})>0\right\} \ , \ 1\leq \ell \leq n \ .$$
Given a state $i\in \{1,2,...,N\}$ and some $1\leq \ell\leq n$, we further denote the indicator function 
$$\mathbf{1}^{i, \checkmark}_\ell
\equiv
\mathbf{1}_{\Sigma^{\checkmark}(\ell|\mathcal{E}_{\{\nu_{(i,j)}\}})}(i) \ ,$$
which indicates that state $i$ is conditionally admissible at $Y_\ell$ given the observed frequencies $\mathcal{E}_{\{\nu_{(i,j)}\}}$. 
\end{definition}

Given an outcome of the event $\mathcal{E}_{\{\nu_{(i,j)}\}}$, the sequence $X_1,...,X_n$ must take the form of a string of chain type $X_1=(i_1, i_2), ..., X_n=(i_{n-1}, i_n)$, such that when counting frequencies, the elements in the string satisfy $\mathcal{E}_{\{\nu_{(i,j)}\}}$. We introduce the following

\begin{definition}[number of strings of chain type with given term]\label{Def:NumberStringsGivenTerm}
Given an $(i,j)$ such that $\nu_{(i,j)}\geq 1$ on the event $\mathcal{E}_{\{\nu_{(i,j)}\}}$, we define by \emph{$\text{\#}^{(i,j)}_\ell(\mathcal{E}_{\{\nu_{(i,j)}\}})$} to be the 
number of different strings of chain type $X_1=(i_1,i_2),...,X_n=(i_{n-1}, i_n)$ 
with the $\ell$-th element being $X_\ell=(i,j)$, and satisfying $\mathcal{E}_{\{\nu_{(i,j)}\}}$.
\end{definition}

It is easy to provide the following example showing that for $j_1\neq j_2$ we may have $\text{\#}^{(i,j_1)}_1(\mathcal{E}_{\{\nu_{(i,j)}\}})\neq \text{\#}^{(i,j_2)}_1(\mathcal{E}_{\{\nu_{(i,j)}\}})$. So that Definition \ref{Def:NumberStringsGivenTerm} is non-trivial.

\begin{example}\rm\label{Example:NumberStringsNotEqual}
Suppose $\{Y_\ell\}_{\ell\geq 1}$ is a Markov chain with a $3$-element state space $\{1,2,3\}$. Set $n=3$ and suppose we have observed 
$$\mathcal{E}_{\{\nu_{(i,j)}\}}=\left\{\nu_{(1,2)}=\nu_{(2,1)}=\nu_{(1,3)}=1, 
\nu_{(i,j)}=0 \text{ for all other pairs of } (i,j)\right\} \ .$$
Then it is easy to see that $\text{\#}^{(1,2)}_1(\mathcal{E}_{\{\nu_{(i,j)}\}})=1$ and $\text{\#}^{(1,3)}_1(\mathcal{E}_{\{\nu_{(i,j)}\}})=0$.
\end{example}

With the above definitions, we can compute the posterior marginal probability of $X_1$ conditioned upon observed frequencies $\mathcal{E}_{\{\nu_{(i,j)}\}}$ as the following

\begin{proposition}\label{Prop:MarkovChainPosteriorEqualsPrior}
Given $1\leq i,j\leq N$, then we have
\emph{\begin{equation}\label{Prop:MarkovChainPosteriorEqualsPrior:Eq}
\mathbf{P}\big(X_1=(i,j) \big|\mathcal{E}_{\{\nu_{(i,j)}\}}, Y_1=i\big) = 
\mathbf{1}^{i, \checkmark}_1 
\cdot \dfrac{\mathbf{1}^{j, \checkmark}_2 \cdot \text{\#}^{(i,j)}_1(\mathcal{E}_{\{\nu_{(i,j)}\}})}
{\sum_{k_2=1}^N \mathbf{1}^{k_2, \checkmark}_2 \cdot 
\text{\#}^{(i,k_2)}_1(\mathcal{E}_{\{\nu_{(i,j)}\}})}
\ .
\end{equation}}
Here we follow the convention that if the events $\{Y_1=i\}$ and $\mathcal{E}_{\{\nu_{(i,j)}\}}$ are disjoint, then $\mathbf{P}\big(X_1=(i,j) \big|\mathcal{E}_{\{\nu_{(i,j)}\}}, Y_1=i\big)=0$. 
\end{proposition}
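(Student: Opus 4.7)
The plan is to reduce the conditional probability to a ratio of trajectory counts by exploiting a conditional symmetry that is the Markov analog of Fact 1 in the proof of Theorem \ref{Thm:IIDPosteriorEqualsPrior}. First I would apply the definition of conditional probability. Because $\{X_1=(i,j)\}\subset\{Y_1=i\}$, the numerator simplifies and
$$
\mathbf{P}\bigl(X_1=(i,j)\bigm|\mathcal{E}_{\{\nu_{(i,j)}\}},Y_1=i\bigr)
=\frac{\mathbf{P}\bigl(X_1=(i,j),\,\mathcal{E}_{\{\nu_{(i,j)}\}}\bigr)}{\mathbf{P}\bigl(\mathcal{E}_{\{\nu_{(i,j)}\}},\,Y_1=i\bigr)}.
$$
By (\ref{Eq:pi2-nonzero})--(\ref{Eq:pi2-zero}) and (\ref{Prop:MarkovChainPosteriorEqualsPrior:Eq:pi2isP}), only strings of chain type carry positive mass, so the numerator is a sum of $\mu^{(2)}$-weights over all chain-type strings with first pair $(i,j)$ satisfying $\mathcal{E}_{\{\nu_{(i,j)}\}}$; the denominator is the same sum, but partitioned by the second entry $Y_2=k_2\in\{1,\dots,N\}$.

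The key step is the conditional symmetry at the path level. For any chain-type string $((i_1,i_2),(i_2,i_3),\dots,(i_n,i_{n+1}))$ matching $\mathcal{E}_{\{\nu_{(i,j)}\}}$ and with fixed head $i_1=i$, the $\mu^{(2)}$-mass regroups as
$$
\pi^0_{i}\prod_{\ell=1}^{n}p_{i_\ell i_{\ell+1}}
=\pi^0_{i}\prod_{k,l=1}^{N}p_{kl}^{\,\nu_{(k,l)}}\;=:\;W_i,
$$
which depends only on the observed frequencies and the head, not on the ordering of the transitions. Consequently every admissible string with head $i$ carries the common weight $W_i$, so the numerator equals $\text{\#}^{(i,j)}_1(\mathcal{E}_{\{\nu_{(i,j)}\}})\cdot W_i$ and the denominator equals $\sum_{k_2=1}^{N}\text{\#}^{(i,k_2)}_1(\mathcal{E}_{\{\nu_{(i,j)}\}})\cdot W_i$. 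Provided the conditioning event has positive probability (so $W_i>0$), the factors $W_i$ cancel and we recover the stated ratio.

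It then remains to reconcile the indicators. If $\mathbf{1}^{i,\checkmark}_1=0$, no admissible chain-type string has head $i$, so $\{Y_1=i\}$ and $\mathcal{E}_{\{\nu_{(i,j)}\}}$ are disjoint and the stated convention gives $0$. If $\mathbf{1}^{i,\checkmark}_1=1$ but $\mathbf{1}^{j,\checkmark}_2=0$, then no admissible string with head $i$ has second entry $j$, which forces $\text{\#}^{(i,j)}_1(\mathcal{E}_{\{\nu_{(i,j)}\}})=0$ while the denominator is strictly positive; both sides again agree. Thus the indicators are logically redundant but make the admissibility structure transparent. The main obstacle, and the point worth emphasizing rather than computing, is the identification of the conditional symmetry $W_i$: it is precisely the multiplicative form of the Markov measure together with the fixed head and frequencies that makes every admissible path equiprobable. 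As flagged in Remark \ref{Remark:IIDProofIllustratesConditionalSymmetry}, the combinatorial symmetry (Fact 3 of the i.i.d.\ proof) is broken here, which is exactly why the posterior must be expressed through the path counts $\text{\#}^{(i,k_2)}_1$ rather than directly through the raw frequencies $\nu_{(i,k_2)}$.
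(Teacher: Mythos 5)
Your proof is correct, but it is not the route the paper takes: it is essentially the ``direct'' argument that the paper itself only sketches in Remark \ref{Remark:MarkovChainProofIllustratesConditionalSymmetry}. You compute $\mathbf{P}(X_1=(i,j)\mid\mathcal{E}_{\{\nu_{(i,j)}\}},Y_1=i)$ as a ratio of sums of $\mu^{(2)}$-masses, observe that every chain-type string with head $i$ matching the frequencies carries the identical weight $W_i=\pi^0_i\prod_{k,l}p_{kl}^{\nu_{(k,l)}}$ (the multiplicative Markov measure regrouped by transition counts), and cancel $W_i$ to leave the ratio of path counts $\text{\#}^{(i,j)}_1/\sum_{k_2}\text{\#}^{(i,k_2)}_1$; your handling of the indicators (that $\mathbf{1}^{j,\checkmark}_2\cdot\text{\#}^{(i,j)}_1=\text{\#}^{(i,j)}_1$ always, and that $\mathbf{1}^{i,\checkmark}_1=0$ collapses everything to the stated convention) is also sound, modulo the same positive-probability caveat the paper itself leaves implicit. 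The paper instead lifts the conditional measure to a probability $\mathscr{P}$ on labeled sequences $\mathscr{S}$ built from $\mathcal{X}^{\text{distinguished}}$ and works through Facts 1$'$--5$'$, counting labeled sequences whose first element is $((i,j),a)$ and normalizing. What your approach buys is brevity and self-containment: the conditional symmetry is read off directly from the product form of the measure, exactly parallel to the alternate proof of the i.i.d.\ case in Remark \ref{Remark:AlternateProof:Prop:IIDPosteriorEqualsPrior}. What the paper's longer route buys is structural information that is reused later: Fact 4$'$ makes explicit \emph{how} the combinatorial symmetry of the i.i.d.\ Fact 4 is broken (the count acquires the factor $\mathbf{1}^{j,\checkmark}_2\,\text{\#}^{(i,j)}_1(\mathcal{E}_{\{\nu_{(i,j)}\}})/\nu_{(i,j)}$), and it is precisely this quantity whose asymptotic equidistribution is established in Lemma \ref{Lm:AsymptoticConditionalSymmetry} and exploited in Lemma \ref{Lm:AsymptoticMarkovChainPosteriorEqualsPrior} to recover the frequency formula in the $n\to\infty$ limit; your last sentence shows you see this point, even though your proof does not need it.
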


\begin{proof}
The proof follows the same scheme proposed in the i.i.d case (Theorem \ref{Thm:IIDPosteriorEqualsPrior}), but with delicate and interesting differences. Given the two events $\mathcal{E}_{\{\nu_{(i,j)}\}}$ and $\{Y_1=i\}$ and suppose these two events are not disjoint, then for any pair $(i,j)\in \Sigma\times \Sigma$, we know that among each $X_\ell$ in the sequence $X_1,...,X_n$, $\ell=1,2,...,n$, the pair $(i,j)$ occurs at a multiple of $\nu_{(i,j)}$ times. Since each $\nu_{(i,j)}$ is a non-negative integer, and $\sum\limits_{i=1}^N \sum\limits_{j=1}^N \nu_{(i,j)}=n$, we see that there are only finitely many pairs of $(i,j)$'s such that $\nu_{(i,j)}\geq 1$, and for all the rest of $(i,j)\in \Sigma\times \Sigma$ we have $\nu_{(i,j)}=0$. For those pairs of $(i,j)$'s such that $\nu_{(i,j)}\geq 1$, we order them in an alphabetical order as $(i_1,j_1)\prec(i_2,j_2)\prec...\prec(i_I,j_I)$, where $(i_1, j_1)\prec (i_2, j_2)$ if and only if $i_1<i_2$ or $i_1=i_2, j_1<j_2$, and $I\in \mathbb{N}$ is finite. Consider the set of $n$ \textit{distinguished} elements

\begin{equation}\label{Prop:MarkovChainPosteriorEqualsPrior:Eq:DistinguishedOccurances}
\mathcal{X}^{\text{distinguished}}\equiv \bigcup_{\iota=1}^I\{((i_\iota, j_\iota),1),...,((i_\iota, j_\iota),\nu_{(i_\iota, j_\iota)})\} \ .
\end{equation} 
Consider all possible length-$n$ ordered sequences $\mathscr{S}$ consisting of distinguished elements of the form $((i,j),a)\in \mathcal{X}^{\text{distinguished}}$ for some $(i,j)\in \{(i_1,j_1),...,(i_I, j_I)\}$ and some $1\leq a\leq \nu_{(i,j)}$, such that the element-wise projection $\mathfrak{Y}$ with $((i,j),a)\stackrel{\mathfrak{Y}}{\rightarrow} (i,j)$ applied to each of the above sequence $\mathscr{S}$ \emph{results in a string of chain type $\mathfrak{Y}(\mathscr{S})$ with head $i$}. 
Such a string of chain type is an outcome of the sequence $X_1,...,X_n$ conditioned on $\mathcal{E}_{\{\nu_{(i,j)}\}}$ and $\{Y_1=i\}$. We construct a probability measure $\mathscr{P}$ on the space of all such sequences $\mathscr{S}$, such that for each sequence we have

\begin{equation}\label{Prop:MarkovChainPosteriorEqualsPrior:Eq:ProbabilityMeasureOnDistinguishedSequences:Symmetry}
\mathscr{P}(\mathscr{S})=C(i) \prod_{\iota =1}^I p_{(i_\iota, j_\iota)}^{\nu_{(i_\iota, j_\iota)}} 
\end{equation}
for an undetermined normalizing constant $C(i)>0$, that may depend on $i$. For a given outcome $(i_1, i_2)$, $(i_2, i_3)$, ... , $(i_{n-1}, i_n)$ of the sequence $X_1,...,X_n$ such that $i_1=i$ and when counting frequencies, the sequence $(i_1, i_2), (i_2, i_3), ... , (i_{n-1}, i_n)$ satisfies $\mathcal{E}_{\{\nu_{(i,j)}\}}$, we must have $\{X_1=(i, i_2), ..., X_n=(i_{n-1}, i_n)\}\subset \mathcal{E}_{\{\nu_{(i,j)}\}}\cap \{Y_1=i\}$. Therefore for such sequences we must have $\mathbf{P}(X_1=(i, i_2), ..., X_n=(i_{n-1}, i_n), \mathcal{E}_{\{\nu_{(i,j)}\}}, Y_1=i)=\mathbf{P}(X_1=(i, i_2), ..., X_n=(i_{n-1}, i_n))=\pi^0_i \displaystyle{\prod_{\iota =1}^I p_{(i_\iota, j_\iota)}^{\nu_{(i_\iota, j_\iota)}}}$. Since $\mathbf{P}(Y_1=i)=\pi^0_i$ is the initial probability distribution, we further have 

\begin{equation}\label{Prop:MarkovChainPosteriorEqualsPrior:Eq:ProbabilityMeasureOnOutcomeSequenceGivenFrequencies}
\begin{array}{ll}
& \mathbf{P}(X_1=(i, i_2), ..., X_n=(i_{n-1}, i_n)| \mathcal{E}_{\{\nu_{(i,j)}\}}, Y_1=i)
\\
= & \dfrac{1}{\mathbf{P}(\mathcal{E}_{\{\nu_{(i,j)}\}}, Y_1=i)}
\mathbf{P}(X_1=(i, i_2), ..., X_n=(i_{n-1}, i_n), \mathcal{E}_{\{\nu_{(i,j)}\}}, Y_1=i)
\\
= & \dfrac{1}{\mathbf{P}(\mathcal{E}_{\{\nu_{(i,j)}\}}|Y_1=i)\mathbf{P}(Y_1=i)}
\mathbf{P}(X_1=(i, i_2), ..., X_n=(i_{n-1}, i_n), \mathcal{E}_{\{\nu_{(i,j)}\}}, Y_1=i)
\\
= & \dfrac{1}{\mathbf{P}(\mathcal{E}_{\{\nu_{(i,j)}\}}|Y_1=i)}\displaystyle{\prod_{\iota =1}^I p_{(i_\iota, j_\iota)}^{\nu_{(i_\iota, j_\iota)}}}
\\
\equiv & D(i)\displaystyle{\prod_{\iota =1}^I p_{(i_\iota, j_\iota)}^{\nu_{(i_\iota, j_\iota)}}} \ ,
\end{array}
\end{equation}
where the constant $D(i)\equiv \dfrac{1}{\mathbf{P}(\mathcal{E}_{\{\nu_{(i,j)}\}}|Y_1=i)}>0$ 
is an undetermined normalizing constant for the conditional measure $\mathbf{P}(\bullet|\mathcal{E}_{\{\nu_{(i,j)}\}}, Y_1=i)$. 
Comparing (\ref{Prop:MarkovChainPosteriorEqualsPrior:Eq:ProbabilityMeasureOnDistinguishedSequences:Symmetry}) and 
(\ref{Prop:MarkovChainPosteriorEqualsPrior:Eq:ProbabilityMeasureOnOutcomeSequenceGivenFrequencies}), we see that for any given outcome $X_1=(i, i_2), ..., X_n=(i_{n-1}, i_n)$ of the sequence $X_1,...,X_n$ satisfying $\mathcal{E}_{\{\nu_{(i,j)}\}}$ and $\{Y_1=i\}$ we must have 

\begin{equation}\label{Prop:MarkovChainPosteriorEqualsPrior:Eq:ProjectionProbabilityIdentity}
\mathbf{P}(X_1=(i, i_2), ..., X_n=(i_{n-1}, i_n)|\mathcal{E}_{\{\nu_{(i,j)}\}}, Y_1=i)=K(i)\mathscr{P}(\mathscr{S}) \ ,
\end{equation}
for some constant $K(i)>0$. 

Combining (\ref{Prop:MarkovChainPosteriorEqualsPrior:Eq:ProbabilityMeasureOnDistinguishedSequences:Symmetry}), (\ref{Prop:MarkovChainPosteriorEqualsPrior:Eq:ProbabilityMeasureOnOutcomeSequenceGivenFrequencies}) and (\ref{Prop:MarkovChainPosteriorEqualsPrior:Eq:ProjectionProbabilityIdentity}), we see the following two facts

\begin{itemize}
\item[Fact 1':] For each of the different sequences $\mathscr{S}$ such that $\mathfrak{Y}(\mathscr{S})$ is a string of chain type with head $i$, $\mathscr{P}(\mathscr{S})$ has the same value.
\item[Fact 2':] The sequence $\mathscr{S}$ in the RHS of (\ref{Prop:MarkovChainPosteriorEqualsPrior:Eq:ProjectionProbabilityIdentity}) can be an arbitrary length-$n$ sequence picked from all the possible choices of $\mathscr{S}$ such that $\mathfrak{Y}(\mathscr{S})$ is a string of chain type with head $i$.
\end{itemize}

For each string of chain type $((i, i_2) , ..., (i_{n-1}, i_n))$ with head $i$, we collect all possible sequences $\mathscr{S}$ such that $\mathfrak{Y}(\mathscr{S})=((i, i_2) , ..., (i_{n-1}, i_n))$. We claim that we have 

\begin{equation}\label{Prop:MarkovChainPosteriorEqualsPrior:Eq:ProjectionProbabilityIdentityUnionS}
\begin{array}{ll}
& \mathbf{P}(X_1=(i, i_2), ..., X_n=(i_{n-1}, i_n)|\mathcal{E}_{\{\nu_{(i,j)}\}}, Y_1=i)
\\
=& \mathscr{P}\left(\text{all possible }\mathscr{S} \text{ such that } \mathfrak{Y}(\mathscr{S})=((i, i_2) , ..., (i_{n-1}, i_n))\right) \ .
\end{array}
\end{equation}

This is because due to the above Fact 1', we have
$$\begin{array}{ll}
& \mathscr{P}\left(\text{all possible }\mathscr{S} \text{ such that } \mathfrak{Y}(\mathscr{S})=((i, i_2) , ..., (i_{n-1}, i_n))\right)
\\
= & \left(\text{Number of all possible } \mathscr{S} \text{ such that } \mathfrak{Y}(\mathscr{S})=((i, i_2) , ..., (i_{n-1}, i_n))\right)\cdot \mathscr{P}(\mathscr{S}) \ .
\end{array}$$

Now we note that by simple combinatorics we have

\begin{itemize}
\item[Fact 3':] For each realization $(i, i_2) , ..., (i_{n-1}, i_n)$ of $X_1,...,X_n$,
$$\left(\text{Number of all possible } \mathscr{S} \text{ such that } \mathfrak{Y}(\mathscr{S})=((i, i_2) , ..., (i_{n-1}, i_n))\right)$$
is independent of the choice of $(i, i_2) , ..., (i_{n-1}, i_n)$\footnote{This number is actually 
$\nu_{(i_1, j_1)}! \, \nu_{(i_2, j_2)}! \, ... \, \nu_{(i_I, j_I)}!$ \ .}. 
\end{itemize}

Therefore by Fact 3' we see that (\ref{Prop:MarkovChainPosteriorEqualsPrior:Eq:ProjectionProbabilityIdentityUnionS}) is equivalent to 

$$\mathbf{P}(X_1=(i, i_2),...,X_n=(i_{n-1},i_n)|\mathcal{E}_{\{\nu_{(i,j)}\}}, Y_1=i)=K_1(i)\mathscr{P}(\mathscr{S}) \ .$$
It is then easy to see that $K_1(i)=K(i)$ just by normalization of the probability measure $\mathscr{P}(\bullet)$ and the conditional probability measure $\mathbf{P}(\bullet|\mathcal{E}_{\{\nu_{(i,j)}\}}, Y_1=i)$, as well as the above Fact 2'. So we proved that (\ref{Prop:MarkovChainPosteriorEqualsPrior:Eq:ProjectionProbabilityIdentityUnionS}) is valid.

From (\ref{Prop:MarkovChainPosteriorEqualsPrior:Eq:ProjectionProbabilityIdentityUnionS}) we know that for an element 
$(i,j)\in \{(i_1, j_1),...,(i_I, j_I)\}$ with the first component $i$ being fixed, we have

\begin{equation}\label{Prop:MarkovChainPosteriorEqualsPrior:Eq:PosterialMarginalDecomposition:Step:Counting}
\mathbf{P}(X_1=(i,j)|\mathcal{E}_{\{\nu_{(i,j)}\}}, Y_1=i)
= \mathbf{1}^{j, \checkmark}_2
\cdot \sum\limits_{a=1}^{\nu_{(i,j)}} \mathscr{P}\left(\text{all possible }\mathscr{S} \text{ whose 1-st element is } ((i,j),a)\right) \ .
\end{equation}

We claim that for each element $((i,j),a)\in \mathcal{X}^{\text{distinguished}}$ with $i$ being fixed we also have

\begin{itemize}
\item[Fact 4':] For each element $((i,j),a)\in \mathcal{X}^{\text{distinguished}}$ with $i$ being fixed, 
$$\left(\text{Number of all possible } \mathscr{S} \text{ whose 1-st element is } ((i,j),a)\right)$$
is independent of $a$, and is actually equal to 
$\dfrac{\nu_{(i_1, j_1)}! \, ... \, \nu_{(i_I, j_I)}!}{\nu_{(i,j)}}\cdot \mathbf{1}^{j,\checkmark}_2\cdot\text{\#}_1^{(i,j)}(\mathcal{E}_{\{\nu_{(i,j)}\}})$. 
\item[Fact 5':] $\mathscr{P}\left(\text{all possible }\mathscr{S} \text{ whose 1-st element is } ((i,j),a)\right)=\dfrac{\nu_{(i_1, j_1)}! \, ... \, \nu_{(i_I, j_I)}!}{\nu_{(i,j)}}\cdot \mathbf{1}^{j,\checkmark}_2\cdot \text{\#}_1^{(i,j)}(\mathcal{E}_{\{\nu_{(i,j)}\}})\cdot p$, where $p$ is independent of $((i,j),a)$.
\end{itemize}

The above Fact 4' is a simple combinatorial observation, and Fact 5' is a consequence of Facts 1' and 4'. Since we have
$$\sum\limits_{((i,j),a)\in \mathcal{X}^{\text{distinguished}}, i \text{ fixed}}
\mathscr{P}\left(\text{all possible }\mathscr{S} \text{ whose 1-st element is } ((i,j),a)\right)=1 \ ,$$
by Facts 4' and 5' we see that 
$$1=\sum\limits_{k_2=1}^N \mathbf{1}^{k_2,\checkmark}_2\cdot
\nu_{(i,k_2)}\cdot \dfrac{\nu_{(i_1, j_1)}! \, ... \, \nu_{(i_I, j_I)}!}{\nu_{(i,k_2)}}\cdot \text{\#}_1^{(i,k_2)}(\mathcal{E}_{\{\nu_{(i,k_2)}\}})\cdot p \ ,$$
so 
$$p=\dfrac{1}{\sum_{k_2=1}^N \mathbf{1}^{k_2,\checkmark}_2\cdot \nu_{(i_1, j_1)}! \, ... \, \nu_{(i_I, j_I)}! \cdot \text{\#}_1^{(i,k_2)}(\mathcal{E}_{\{\nu_{(i,k_2)}\}})} \ .$$
By the above equation, (\ref{Prop:MarkovChainPosteriorEqualsPrior:Eq:PosterialMarginalDecomposition:Step:Counting}) and Fact 5' we see that (\ref{Prop:MarkovChainPosteriorEqualsPrior:Eq}) holds when $\mathcal{E}_{\{\nu_{(i,j)}\}}$ is not disjoint with $\{Y_1=i\}$.

Finally we see that if $\mathcal{E}_{\{\nu_{(i,j)}\}}$ is disjoint with $\{Y_1=i\}$, then $\mathbf{1}^{i,\checkmark}_1=0$ and (\ref{Prop:MarkovChainPosteriorEqualsPrior:Eq}) is automatically true. So we have proved the whole statement.
\end{proof}

\begin{remark}[Conditional Symmetry]\rm\label{Remark:MarkovChainProofIllustratesConditionalSymmetry}
The above argument is parallel to the proof of Theorem \ref{Thm:IIDPosteriorEqualsPrior}. Here, as is the same in Remark \ref{Remark:IIDProofIllustratesConditionalSymmetry}, we used the idea of extending the probability space and our obtained identity (\ref{Prop:MarkovChainPosteriorEqualsPrior:Eq:ProjectionProbabilityIdentityUnionS}) is again relating the conditional probability to an absolute probability. Fact 1' is parallel to Fact 1, and is a manifestation of the conditional symmetry in the Markov chain case.

However, in the Markov chain case, we do not have the combinatorial symmetry in exactly the same way as Fact 4 proposed in the proof of Theorem \ref{Thm:IIDPosteriorEqualsPrior}. Rather, this is replaced by our new Fact 4', which leads to Fact 5'. This is the main reason why we cannot simply use the frequency formula 
\begin{equation}\label{Remark:Eq:FrenquencyFormula}
\mathbf{P}\big(X_1=(i,j) \big|\mathcal{E}_{\{\nu_{(i,j)}\}}, Y_1=i\big) = 
\dfrac{\nu_{(i,j)}}
{\sum_{k_2=1}^N \nu_{(i, k_2)}}
\end{equation}
to replace (\ref{Prop:MarkovChainPosteriorEqualsPrior:Eq}). However, we claim that asymptotically it is still true to use (\ref{Remark:Eq:FrenquencyFormula}) to replace our obtained formula (\ref{Prop:MarkovChainPosteriorEqualsPrior:Eq}). This issue will be further investigated in the next Section.

Again, in a same fashion as Remark \ref{Remark:AlternateProof:Prop:IIDPosteriorEqualsPrior}, our result for the Markov chain case can also be proved directly using the fact that the joint distribution of $X_1,...,X_n$ remains the same conditioned on $\mathcal{E}_{\nu_{(i,j)}}$ and $Y_1=i$ regardless of how we place the outcomes $(i, i_1), ..., (i_{n-1}, i_n)$. In our proof, the main purpose of introducing an extended probability space is to show that the combinatorial symmetry will be broken into Fact 5', so that we cannot simply use the observed frequencies to calculate the posterior marginal distribution.
\end{remark}

\begin{proposition}\label{Prop:MarkovChainPosteriorEqualsPrior:Y_1}
Given $1\leq i\leq N$, then we have
\begin{equation}\label{Prop:MarkovChainPosteriorEqualsPrior:Y_1:Eq}
\mathbf{P}(Y_1=i|\mathcal{E}_{\{\nu_{(i,j)}\}})=\dfrac{\mathbf{1}^{i, \checkmark}_1 \pi^0_i}{\sum_{k_1=1}^N \mathbf{1}^{k_1, \checkmark}_1 \pi^0_{k_1}} \ .
\end{equation}
\end{proposition}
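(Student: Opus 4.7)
The plan is to apply Bayes' formula directly. Since $\mathbf{P}(Y_1 = i) = \pi^0_i$ by hypothesis,
$$\mathbf{P}(Y_1 = i | \mathcal{E}_{\{\nu_{(i,j)}\}}) = \frac{\pi^0_i \, \mathbf{P}(\mathcal{E}_{\{\nu_{(i,j)}\}} | Y_1 = i)}{\sum_{k=1}^N \pi^0_k \, \mathbf{P}(\mathcal{E}_{\{\nu_{(i,j)}\}} | Y_1 = k)},$$
so the task collapses to evaluating the likelihood $\mathbf{P}(\mathcal{E}_{\{\nu_{(i,j)}\}}|Y_1=i)$ as a function of the head $i$.

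For that, I would recycle the computation already inside the proof of Proposition \ref{Prop:MarkovChainPosteriorEqualsPrior}. Any admissible chain-type string with head $i$ matching $\mathcal{E}_{\{\nu_{(i,j)}\}}$ has joint probability $\pi^0_i \prod_{\iota=1}^I p_{(i_\iota, j_\iota)}^{\nu_{(i_\iota, j_\iota)}}$, and the product here depends only on the observed frequencies, not on the head. Consequently $\mathbf{P}(\mathcal{E}_{\{\nu_{(i,j)}\}}, Y_1 = i)$ equals $\pi^0_i$ times this head-independent product times the number $N(i)$ of admissible chain-type strings with head $i$. Because $N(i) = 0$ precisely when $i \notin \Sigma^{\checkmark}(1|\mathcal{E}_{\{\nu_{(i,j)}\}})$, the admissibility indicator $\mathbf{1}^{i,\checkmark}_1$ emerges as a natural factor, and the common transition-probability product cancels between the numerator and denominator of the Bayes ratio.

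The step that remains, and which I expect to be the main obstacle, is to show that $N(i)$ takes a common value across every admissible head $i$, a combinatorial symmetry at the level of admissible trajectories that plays the role of ``Fact 3'' in the i.i.d.\ case. This is delicate because Remark \ref{Remark:MarkovChainProofIllustratesConditionalSymmetry} has already warned that naive combinatorial symmetry can be broken in the Markov setting (replaced by the more intricate Fact 4'). I would expect the head-invariance of $N(i)$ to be established either by an explicit bijection between the sets of admissible chain-type strings with different admissible heads, or by invoking Whittle's enumeration formula (cited in Section \ref{Sec:InfiniteTimeLimitMarkovChain}), which gives a closed-form expression for such counts that is invariant under the choice of admissible head for a fixed frequency profile. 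Once this combinatorial identity is secured, substituting back into the Bayes expression yields (\ref{Prop:MarkovChainPosteriorEqualsPrior:Y_1:Eq}) directly.
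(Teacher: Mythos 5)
Your Bayes decomposition is exactly the right reduction, and the factorization $\mathbf{P}(\mathcal{E}_{\{\nu_{(i,j)}\}}, Y_1=i)=\pi^0_i\, N(i)\prod_{\iota}p_{(i_\iota,j_\iota)}^{\nu_{(i_\iota,j_\iota)}}$, with $N(i)$ the number of admissible chain-type strings with head $i$, is precisely what underlies the paper's (much terser) argument. You are also right that everything hinges on whether $N(i)$ is constant over admissible heads. The paper does not prove this; instead it asserts that whenever $\{Y_1=i\}$ meets $\mathcal{E}_{\{\nu_{(i,j)}\}}$ one has $\mathcal{E}_{\{\nu_{(i,j)}\}}\subseteq\{Y_1=i\}$, i.e.\ that the head is uniquely determined by the frequency profile. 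By the flow-balance identity (\ref{Eq:StringOfChainTypeFrequencyBalance}) this uniqueness does hold whenever some balance $\sum_j\nu_{(i,j)}-\sum_j\nu_{(j,i)}$ is nonzero, and in that situation the Proposition is trivially true, since exactly one indicator $\mathbf{1}^{k,\checkmark}_1$ equals one.

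The genuine difficulty is the remaining case, where all balances vanish and the admissible strings are closed Eulerian circuits: there several heads can be admissible, and the head-invariance of $N(i)$ that you defer is false, so neither a bijection nor Whittle's formula will deliver it. Concretely, take $N=2$, $n=3$, $\nu_{(1,1)}=\nu_{(1,2)}=\nu_{(2,1)}=1$ and $\nu_{(2,2)}=0$. The trajectories $(Y_1,\dots,Y_4)$ realizing this frequency event are $(1,1,2,1)$, $(1,2,1,1)$ and $(2,1,1,2)$, so $N(1)=2$ while $N(2)=1$, and a direct computation gives $\mathbf{P}(Y_1=1|\mathcal{E}_{\{\nu_{(i,j)}\}})=2\pi^0_1/(2\pi^0_1+\pi^0_2)$, whereas (\ref{Prop:MarkovChainPosteriorEqualsPrior:Y_1:Eq}) predicts $\pi^0_1/(\pi^0_1+\pi^0_2)$. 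In general, in the circuit case $N(i)$ scales with the out-degree $\sum_j\nu_{(i,j)}$ (as in the BEST theorem), so the posterior weights $\pi^0_i$ by $N(i)$ rather than by the indicator alone. In short: your reduction is sound and more careful than the paper's one-line justification, but the step you flag as the main obstacle is not a provable lemma --- it fails exactly when the head is not forced by $\mathcal{E}_{\{\nu_{(i,j)}\}}$, and there the statement itself requires either a correction or an added hypothesis excluding perfectly balanced frequency profiles.
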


\begin{proof}
We first consider the case when the events $\{Y_1=i\}$ and $\mathcal{E}_{\{\nu_{(i,j)}\}}$ are not disjoint. In this case, there must be a pair $(i,j)$ with the first component $i$ being fixed, such that $\nu_{(i,j)}\geq 1$. Thus $\mathcal{E}_{\{\nu_{(i,j)}\}}\subseteq \{Y_1=i\}$ and we have
$$\mathbf{P}(Y_1=i|\mathcal{E}_{\{\nu_{(i,j)}\}})
= \dfrac{1}{\mathbf{P}(\mathcal{E}_{\{\nu_{(i,j)}\}})}\mathbf{P}(Y_1=i, \mathcal{E}_{\{\nu_{(i,j)}\}})
=  C \pi^0_i \ ,
$$
where $C>0$ is a normalizing constant. 

It is easy to see that $\mathbf{1}^{i, \checkmark}_1=0$ if and only if the events $\{Y_1=i\}$ and $\mathcal{E}_{\{\nu_{(i,j)}\}}$ are disjoint. Thus in general we have
$$\mathbf{P}(Y_1=i|\mathcal{E}_{\{\nu_{(i,j)}\}})
=  C \mathbf{1}^{i, \checkmark}_1 \pi^0_i \ .
$$
By normalization of the conditional probability $\mathbf{P}(\bullet|\mathcal{E}_{\{\nu_{(i,j)}\}})$ we know that the normalization constant $C=\dfrac{1}{\sum_{k_1=1}^N \mathbf{1}^{k_1, \checkmark}_1 \pi^0_{k_1}}$, so we proved the statememt of the Proposition.
\end{proof}

Combining Propositions \ref{Prop:MarkovChainPosteriorEqualsPrior}, \ref{Prop:MarkovChainPosteriorEqualsPrior:Y_1} we easily have

\begin{theorem}[posterior distribution for the finite Markov chain case]\label{Thm:Prop:MarkovChainPosteriorEqualsPrior}
Given $1\leq i,j\leq N$, then we have
\emph{\begin{equation}\label{Thm:Prop:MarkovChainPosteriorEqualsPrior:Eq}
\mathbf{P}\big(X_1=(i,j) \big|\mathcal{E}_{\{\nu_{(i,j)}\}}\big) = 
\dfrac{\mathbf{1}^{i, \checkmark}_1 \pi^0_i}{\sum_{k_1=1}^N \mathbf{1}^{k_1, \checkmark}_1 \pi^0_{k_1}}
\cdot \dfrac{\mathbf{1}^{j, \checkmark}_2 \cdot \text{\#}^{(i,j)}_1(\mathcal{E}_{\{\nu_{(i,j)}\}})}
{\sum_{k_2=1}^N \mathbf{1}^{k_2, \checkmark}_2 \cdot 
\text{\#}^{(i,k_2)}_1(\mathcal{E}_{\{\nu_{(i,j)}\}})}
\ .
\end{equation}}
\end{theorem}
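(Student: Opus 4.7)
The plan is to derive this formula as a one-line consequence of the two preceding Propositions by splitting the conditioning on $\mathcal{E}_{\{\nu_{(i,j)}\}}$ according to the value of $Y_1$. The crucial observation is that the event $\{X_1=(i,j)\}$ coincides with the event $\{Y_1=i,\ Y_2=j\}$, since $X_1=(Y_1,Y_2)$ by definition of the consecutive-pair process. In particular $\{X_1=(i,j)\}\subseteq\{Y_1=i\}$, so when we write the standard decomposition
\begin{equation*}
\mathbf{P}\big(X_1=(i,j)\,\big|\,\mathcal{E}_{\{\nu_{(i,j)}\}}\big)
=\sum_{k_1=1}^{N}\mathbf{P}\big(X_1=(i,j)\,\big|\,\mathcal{E}_{\{\nu_{(i,j)}\}},Y_1=k_1\big)\,\mathbf{P}\big(Y_1=k_1\,\big|\,\mathcal{E}_{\{\nu_{(i,j)}\}}\big),
\end{equation*}
only the single term $k_1=i$ survives, because for $k_1\neq i$ the events $\{X_1=(i,j)\}$ and $\{Y_1=k_1\}$ are disjoint (with the convention, already used in Proposition \ref{Prop:MarkovChainPosteriorEqualsPrior}, that the conditional probability is set to zero on disjoint conditioning events).

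Next, I would substitute Proposition \ref{Prop:MarkovChainPosteriorEqualsPrior} for the factor $\mathbf{P}(X_1=(i,j)\mid\mathcal{E}_{\{\nu_{(i,j)}\}},Y_1=i)$ and Proposition \ref{Prop:MarkovChainPosteriorEqualsPrior:Y_1} for the factor $\mathbf{P}(Y_1=i\mid\mathcal{E}_{\{\nu_{(i,j)}\}})$. Since $\mathbf{1}^{i,\checkmark}_1$ is a $0/1$-valued indicator, the product $\mathbf{1}^{i,\checkmark}_1\cdot\mathbf{1}^{i,\checkmark}_1=\mathbf{1}^{i,\checkmark}_1$, and collecting terms immediately yields
\begin{equation*}
\mathbf{P}\big(X_1=(i,j)\,\big|\,\mathcal{E}_{\{\nu_{(i,j)}\}}\big)
=\frac{\mathbf{1}^{i,\checkmark}_1\pi^0_i}{\sum_{k_1=1}^N \mathbf{1}^{k_1,\checkmark}_1\pi^0_{k_1}}\cdot\frac{\mathbf{1}^{j,\checkmark}_2\cdot\text{\#}^{(i,j)}_1(\mathcal{E}_{\{\nu_{(i,j)}\}})}{\sum_{k_2=1}^N \mathbf{1}^{k_2,\checkmark}_2\cdot\text{\#}^{(i,k_2)}_1(\mathcal{E}_{\{\nu_{(i,j)}\}})},
\end{equation*}
which is exactly \eqref{Thm:Prop:MarkovChainPosteriorEqualsPrior:Eq}.

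The only minor subtlety, and what might be called the ``obstacle,'' is bookkeeping of the boundary cases: one must check that when $\mathbf{1}^{i,\checkmark}_1=0$ (that is, when $\{Y_1=i\}$ is disjoint from $\mathcal{E}_{\{\nu_{(i,j)}\}}$) both sides vanish consistently, and that the convention adopted after Proposition \ref{Prop:MarkovChainPosteriorEqualsPrior} makes the reduction $\sum_{k_1}(\cdots)=(\text{term at }k_1=i)$ rigorously valid rather than merely formally so. Once this is checked, no further combinatorial or probabilistic work is needed: the theorem is a clean application of the tower property together with the identification $X_1=(Y_1,Y_2)$.
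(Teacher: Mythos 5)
Your proposal is correct and is essentially the paper's own proof: the paper likewise combines Propositions \ref{Prop:MarkovChainPosteriorEqualsPrior} and \ref{Prop:MarkovChainPosteriorEqualsPrior:Y_1} via the identity $\mathbf{P}(X_1=(i,j)\mid\mathcal{E}_{\{\nu_{(i,j)}\}})=\mathbf{P}(Y_1=i\mid\mathcal{E}_{\{\nu_{(i,j)}\}})\cdot\mathbf{P}(X_1=(i,j)\mid\mathcal{E}_{\{\nu_{(i,j)}\}},Y_1=i)$, using that $X_1=(i,j)$ implies $Y_1=i$. Your extra care with the sum over $k_1$ and the degenerate case $\mathbf{1}^{i,\checkmark}_1=0$ is a harmless elaboration of the same argument.
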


\begin{proof}
This is an easy consequence of the simple conditional probability formula
\begin{equation}\label{Thm:Prop:MarkovChainPosteriorEqualsPrior:Eq:ConditionalProbabilityFormula}
\mathbf{P}\big(X_1=(i,j) \big|\mathcal{E}_{\{\nu_{(i,j)}\}}\big) = 
\mathbf{P}(Y_1=i|\mathcal{E}_{\{\nu_{(i,j)}\}})
\cdot \mathbf{P}\big(X_1=(i,j) \big|\mathcal{E}_{\{\nu_{(i,j)}\}}, Y_1=i\big) \ ,
\end{equation}
as well as the fact that $X_1=(i,j)$ implies $Y_1=i$.
\end{proof}

For the rest of this section, we would like to focus more on the combinatorial calculation of the number of strings of chain type $\text{\#}^{(i,j)}_1(\mathcal{E}_{\{\nu_{(i,j)}\}})$ that we have introduced in Definition \ref{Def:NumberStringsGivenTerm}. Actually, this number is calculated based on the so-called Whittle's formula (see \cite{whittle1955}, also \cite[Theorem 2.1]{billingsly1961AnnStat}). For the reader's convenience, we shall first describe Whittle's result below. Our formulation of this result is based on \cite[Theorem 2.1]{billingsly1961AnnStat}, but the mathematical terms and symbols we use will follow those in our present paper.

Given a frequency event $\mathcal{E}_{\{\nu_{(i,j)}\}}$ and a string of chain type $X_1=(i_1, i_2),...,X_n=(i_n, i_{n+1})$ that satisfies $\mathcal{E}_{\{\nu_{(i,j)}\}}$, it is easy to observe that 

\begin{equation}\label{Eq:StringOfChainTypeFrequencyBalance}
\sum\limits_{j=1}^{N} \nu_{(i,j)} - \sum\limits_{j=1}^{N} \nu_{(j,i)}  =\mathbf{1}_{\{i=i_1\}}-\mathbf{1}_{\{i=i_{n+1}\}} \ .
\end{equation}

It is easy to see that once $\mathcal{E}_{\{\nu_{(i,j)}\}}$ is given, then $i_1$ is fixed if we fix $i_{n+1}$, and $i_{n+1}$ is fixed if we fix $i_1$. However, if we do not fix either $i_1$ or $i_{n+1}$, then we may have different choices of both of them. As an example, the strings of chain type $(1,2), (2,1)$ and $(2,1), (1,2)$ correspond to the same frequency event $\mathcal{E}_{\{\nu_{(i,j)}\}}$, but the choices of $i_1$ and $i_{n+1}$ can be different.

Recall we have assumed that the state space of the chain $\{Y_{\ell}\}_{\ell\geq 1}$ is a finite set $\Sigma=\{1,2,...,N\}$. Let us fix some $u, v\in \{1,2,...,N\}$ and consider all possible strings of chain type $X_1=(i_1, i_2),...,X_n=(i_n, i_{n+1})$ that satisfy the given frequency event $\mathcal{E}_{\{\nu_{(i,j)}\}}$, such that $i_1=u, i_{n+1}=v$. The total number of such strings of chain type is denoted by $N_{uv}^{(n)}(\mathcal{E}_{\{\nu_{(i,j)}\}})$. We shall first form a matrix $F^*$ of size $N\times N$ with elements $F^*=(\nu_{ij}^*)_{1\leq i, j\leq N}$, where
\begin{equation}\label{Eq:WhittleFormulaF*} 
\nu_{ij}^*=\left\{
\begin{array}{lll}
\mathbf{1}_{\{i=j\}}-\dfrac{\nu_{(i,j)}}{\sum\limits_{j=1}^{N} \nu_{(i,j)}} \ , & \text{ if } & \sum\limits_{j=1}^{N} \nu_{(i,j)}>0 \ ,  
\\
\mathbf{1}_{\{i=j\}} \ , & \text{ if } & \sum\limits_{j=1}^{N} \nu_{(i,j)}=0 \ .
\end{array}\right.
\end{equation}

The following result is due to Whittle in 1955 (see \cite{whittle1955}) and its proof is also presented in \cite[Theorem 2.1]{billingsly1961AnnStat}.

\begin{theorem}[Whittle's formula, 1955]\label{Thm:WhittleFormula}
We have
\begin{equation}\label{Thm:WhittleFormula:Eq}
N_{uv}^{(n)}(\mathcal{E}_{\{\nu_{(i,j)}\}})=\dfrac{\prod\limits_{i=1}^{N} \left(\sum\limits_{j=1}^{N} \nu_{(i,j)}\right)!}
{\prod\limits_{i=1}^{N}\prod\limits_{j=1}^{N} \nu_{(i,j)}!}F_{vu}^* \ ,
\end{equation}
where $F_{vu}^*$ is the $(v,u)$-th cofactor of the matrix $F^*$ and $0!=1$.
\end{theorem}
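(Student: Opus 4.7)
My plan is to prove Whittle's formula by reducing the count to Eulerian trails in an auxiliary multi-digraph, and then invoking the BEST theorem (de Bruijn, van Aardenne-Ehrenfest, Smith, Tutte) together with the Matrix-Tree theorem. I would first build a directed multigraph $G$ on vertex set $\Sigma$ carrying $\nu_{(i,j)}$ distinguished parallel copies of the edge $i \to j$. Chain-type strings matching $\mathcal{E}_{\{\nu_{(i,j)}\}}$ with head $u$ and tail $v$ correspond, after forgetting the labels on parallel edges, exactly to Eulerian trails of $G$ from $u$ to $v$; distinguishing the labels inflates the count by $\prod_{i,j}\nu_{(i,j)}!$, so the number of labelled Eulerian trails equals $N_{uv}^{(n)}(\mathcal{E}_{\{\nu_{(i,j)}\}}) \cdot \prod_{i,j}\nu_{(i,j)}!$.

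Next I would adjoin a phantom edge $v \to u$ (trivial when $u = v$). By the balance relation (\ref{Eq:StringOfChainTypeFrequencyBalance}) the resulting graph $\widetilde{G}$ is balanced at every vertex, so labelled Eulerian trails of $G$ from $u$ to $v$ are in bijection with labelled Eulerian circuits of $\widetilde{G}$ rooted at the phantom edge. The BEST theorem then gives
\begin{equation*}
\bigl|\bigl\{\text{Eulerian trails of } G \text{ from } u \text{ to } v\bigr\}\bigr| \;=\; \tau_v(\widetilde{G}) \cdot n_v! \cdot \prod_{i\neq v}(n_i-1)!,
\end{equation*}
where $\tau_v(\widetilde{G})$ is the number of spanning in-arborescences of $\widetilde{G}$ rooted at $v$, and the factor $n_v! = (n_v+1-1)!$ reflects the phantom edge boosting $v$'s out-degree by one.

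I would then evaluate $\tau_v(\widetilde{G})$ via the Matrix-Tree theorem. Since the phantom edge only perturbs row $v$ of the directed Laplacian, which is deleted when taking the $(v,v)$-cofactor, one has $\tau_v(\widetilde{G}) = \det(L^{(v,v)})$ where $L = D\,F^*$ for the \emph{original} graph, with $D = \mathrm{diag}(n_1,\ldots,n_N)$. Row-scaling gives $\tau_v(\widetilde{G}) = \bigl(\prod_{i \neq v} n_i\bigr) \cdot F^*_{vv}$, and the row-stochasticity of $P$ (so $F^*\mathbf{1} = 0$) forces the columns of $\mathrm{adj}(F^*)$ to be proportional to $\mathbf{1}$, hence $F^*_{vv} = F^*_{vu}$. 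Collecting factors produces $F^*_{vu} \cdot \prod_i n_i!$, and dividing by the labelling inflation $\prod_{i,j}\nu_{(i,j)}!$ recovers (\ref{Thm:WhittleFormula:Eq}).

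The principal obstacle is the bookkeeping in the last two steps: tracking that the phantom-edge perturbation survives exactly at position $v$ in the degree-factorial product while being annihilated inside the Matrix-Tree minor, and separately handling vertices with $n_i = 0$, where the second case of (\ref{Eq:WhittleFormulaF*}) kicks in and the corresponding vertex is isolated in $G$, so that both sides of Whittle's formula degenerate compatibly. Everything else is a direct translation between classical combinatorial and algebraic identities, and the result can be spot-checked on the two-state birth-death chain as a sanity test.
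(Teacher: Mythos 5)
Your proposal is correct in outline, but note that the paper does not actually prove this theorem: it imports Whittle's formula from \cite{whittle1955} and \cite[Theorem 2.1]{billingsly1961AnnStat}, where the argument is an induction on $n$ that peels off the first transition and relates the count for $\{\nu_{(i,j)}\}$ to the count for the reduced frequency array (essentially the recursion the paper itself exploits, in the other direction, in Corollary \ref{Corollary:NumberOfChainsStringType}). Your route via Eulerian trails, the BEST theorem, and the Matrix--Tree theorem is the standard modern combinatorial proof and is genuinely different; it buys a non-inductive, structural explanation of where the cofactor comes from (it is an arborescence count) and of why the formula vanishes exactly when the transition multigraph is disconnected or unbalanced, at the price of invoking two nontrivial classical theorems rather than elementary manipulation of multinomial counts. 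The accounting you sketch does close up: the labelled-trail count is $F^*_{vu}\prod_i n_i!$ with $n_i=\sum_j\nu_{(i,j)}$, and dividing by $\prod_{i,j}\nu_{(i,j)}!$ gives (\ref{Thm:WhittleFormula:Eq}). Three small corrections/cautions. First, the identity $F^*\mathbf{1}=0$ comes from row-stochasticity of the \emph{empirical} transition matrix $\nu_{(i,j)}/n_i$, not of $P$; and the inference ``columns of $\mathrm{adj}(F^*)$ proportional to $\mathbf{1}$'' needs the case $\mathrm{rank}(F^*)<N-1$ (where $\mathrm{adj}(F^*)=0$ and the cofactors are still all equal) --- the paper's Lemma \ref{Lm:WhittleFormulaF*CofactorSame} proves the same equality of cofactors unconditionally by column operations, and you could simply cite it. Second, rows with $n_i=0$ must be isolated vertices for your Laplacian factorization $L=DF^*$ to make sense; the second branch of (\ref{Eq:WhittleFormulaF*}) places a unit row there precisely so that cofactor expansion deletes the vertex, which is the compatibility you flag and which does work out. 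Third, you should state explicitly that when the balance condition (\ref{Eq:StringOfChainTypeFrequencyBalance}) fails or the support is disconnected, both sides of the formula are zero (no phantom edge is needed, since no Eulerian trail and no spanning arborescence exists), so the theorem is only substantive under those hypotheses.
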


Very simply, the Whittle's formula provides us with an easy calculation of the quantity $\text{\#}^{(i,j)}_1(\mathcal{E}_{\{\nu_{(i,j)}\}})$ that we have introduced in Definition \ref{Def:NumberStringsGivenTerm}. To this end, for given $i,j\in \{1,2,...,N\}$ we define the $N\times N$ matrix $\widetilde{F}^*=(\widetilde{\nu}^*_{\widetilde{i}\,\widetilde{j}})_{1\leq \widetilde{i},\widetilde{j}\leq N}$, where

\begin{equation}\label{Eq:WhittleFormulaF*Tilde} 
\widetilde{\nu}_{\widetilde{i} \, \widetilde{j}}^*=\left\{
\begin{array}{lll}
\mathbf{1}_{\{i=j\}}-\dfrac{\nu_{(i,j)}-1}{\sum\limits_{k=1}^{N} \nu_{(i,k)}-1} \ , & \text{ if } & \widetilde{i}=i , \widetilde{j}=j \text{ and} \sum\limits_{k=1}^{N} \nu_{(i,k)}>1 \ , \ \nu_{(i,j)}\geq 1 \ ,
\\
\mathbf{1}_{\{i=\widetilde{j}\}}-\dfrac{\nu_{(i,\widetilde{j})}}{\sum\limits_{k=1}^{N} \nu_{(i,k)}-1} \ , & \text{ if } & \widetilde{i}=i, \widetilde{j}\neq j \text{ and} \sum\limits_{k=1}^{N} \nu_{(i,k)}>1 \ , \ \nu_{(i,j)}\geq 1 \ ,
\\
\mathbf{1}_{\{i=\widetilde{j}\}} \ , & \text{ if } & \widetilde{i}=i \text{ and} \sum\limits_{k=1}^{N} \nu_{(i,k)}=1 \ , \ \nu_{(i,j)}= 1 \ ,
\\
\mathbf{1}_{\{\widetilde{i}=\widetilde{j}\}}-\dfrac{\nu_{(\widetilde{i},\widetilde{j})}}{\sum\limits_{k=1}^{N} \nu_{(\widetilde{i},k)}} \ , & \text{ if } & \widetilde{i}\neq i \text{ and} \sum\limits_{k=1}^{N} \nu_{(\widetilde{i},k)}>0 \ ,
\\
\mathbf{1}_{\{\widetilde{i}=\widetilde{j}\}} \ , & \text{ if } & \widetilde{i}\neq i \text{ and} \sum\limits_{k=1}^{N} \nu_{(\widetilde{i},k)}=0 \ .
\end{array}\right.
\end{equation}

Then we have
\begin{corollary}[The exact calculation of the number of strings of chain type with given term]\label{Corollary:NumberOfChainsStringType} Given the frequency event $\mathcal{E}_{\{\nu_{(i,j)}\}}$ that satisfies \emph{(\ref{Eq:StringOfChainTypeFrequencyBalance})} with fixed $i_1=i$ and $i_{n+1}=v$, and suppose that $\nu_{(i,j)}\geq 1$ for fixed $j$. Then the quantity \emph{$\text{\#}^{(i,j)}_1(\mathcal{E}_{\{\nu_{(i,j)}\}})$} that we have introduced in \emph{Definition \ref{Def:NumberStringsGivenTerm}} can be calculated as
\begin{equation}\label{Corollary:NumberOfChainsStringType:Eq}
\emph{\text{\#}}^{(i,j)}_1(\mathcal{E}_{\{\nu_{(i,j)}\}})=
\left\{
\begin{array}{ll}
\dfrac{\nu_{(i,j)}}{\sum\limits_{j=1}^N \nu_{(i,j)}}\cdot N_{iv}^{(n)}(\mathcal{E}_{\{\nu_{(i,j)}\}})\cdot 
\dfrac{\widetilde{F}_{vj}^*}{F_{vi}^*}  \ , & \emph{\text{ if }} N_{iv}^{(n)}(\mathcal{E}_{\{\nu_{(i,j)}\}}) > 0 \ ;
\\
0 \ , & \emph{\text{ if }} N_{iv}^{(n)}(\mathcal{E}_{\{\nu_{(i,j)}\}}) = 0 \ ,
\end{array}
\right. 
\end{equation}
where $N_{iv}^{(n)}(\mathcal{E}_{\{\nu_{(i,j)}\}})$ is the quantity defined in the Whittle's formula \emph{(\ref{Thm:WhittleFormula:Eq})} with $u=i, v=v$; 
$F^*_{vi}$ is the $(v,i)$-th cofactor of the matrix $F^*=(\nu_{ij})_{1\leq i,j\leq N}$ with elements defined in \emph{(\ref{Eq:WhittleFormulaF*})}, and $\widetilde{F}^*_{vj}$ is the $(v,j)$-th cofactor of the matrix $\widetilde{F}^*=(\widetilde{\nu}_{(\widetilde{i} \, \widetilde{j})})_{1\leq \widetilde{i} \, \widetilde{j} \leq N}$ with elements defined in \emph{(\ref{Eq:WhittleFormulaF*Tilde})}. 
\end{corollary}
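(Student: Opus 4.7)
My plan is to reduce the computation of $\text{\#}^{(i,j)}_1(\mathcal{E}_{\{\nu_{(i,j)}\}})$ to a direct application of Whittle's formula to a modified frequency event. Specifically, fixing $X_1=(i,j)$ amounts to peeling off the first step of the chain: the remainder $X_2,\ldots,X_n$ is a length-$(n-1)$ string of chain type with head $j$ and final endpoint $v$, satisfying the frequency event $\mathcal{E}'$ obtained from $\mathcal{E}_{\{\nu_{(i,j)}\}}$ by replacing $\nu_{(i,j)}$ with $\nu_{(i,j)}-1$ and leaving every other frequency unchanged. The balance equation (\ref{Eq:StringOfChainTypeFrequencyBalance}) holds for $\mathcal{E}'$ with starting point $j$ and endpoint $v$, so Whittle's theorem applies. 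Hence
$$\text{\#}^{(i,j)}_1(\mathcal{E}_{\{\nu_{(i,j)}\}}) = N^{(n-1)}_{jv}(\mathcal{E}'),$$
and Theorem \ref{Thm:WhittleFormula} evaluates this count in terms of the $(v,j)$-cofactor of the Whittle matrix associated with $\mathcal{E}'$.

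The second step is to verify that the matrix $\widetilde F^*$ defined in (\ref{Eq:WhittleFormulaF*Tilde}) is precisely the Whittle matrix for the reduced frequency event $\mathcal{E}'$. This is an entry-by-entry check: for rows $\widetilde i \ne i$, the frequencies are untouched and the formula collapses to (\ref{Eq:WhittleFormulaF*}); for row $\widetilde i = i$, I replace $\nu_{(i,j)}$ by $\nu_{(i,j)}-1$ in the numerator of the $(i,j)$-entry and $\sum_k\nu_{(i,k)}$ by $\sum_k\nu_{(i,k)}-1$ in the denominator throughout the row. The subcase $\sum_k\nu_{(i,k)}=1$ (which forces $\nu_{(i,j)}=1$) kills row $i$ of the reduced problem, giving the identity row in (\ref{Eq:WhittleFormulaF*Tilde}). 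Once this identification is settled, Whittle's formula for $\mathcal{E}'$ gives
$$N^{(n-1)}_{jv}(\mathcal{E}') = \frac{\prod_{i'}\!\bigl(\sum_{k}\nu'_{(i',k)}\bigr)!}{\prod_{i',k}\nu'_{(i',k)}!}\cdot \widetilde F^*_{vj}.$$

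The third step is the combinatorial simplification. Taking the ratio of the multinomial prefactor for $\mathcal{E}'$ to the one appearing in Whittle's formula for the original event $\mathcal{E}_{\{\nu_{(i,j)}\}}$, all factors outside row $i$ cancel and the row-$i$ contribution telescopes to
$$\frac{(\sum_{k}\nu_{(i,k)}-1)!}{(\sum_{k}\nu_{(i,k)})!}\cdot\frac{\nu_{(i,j)}!}{(\nu_{(i,j)}-1)!} = \frac{\nu_{(i,j)}}{\sum_{k}\nu_{(i,k)}}.$$
Using the Whittle identity $N^{(n)}_{iv}(\mathcal{E}_{\{\nu_{(i,j)}\}}) = \frac{\prod_{i'}(\sum_k\nu_{(i',k)})!}{\prod_{i',k}\nu_{(i',k)}!}\cdot F^*_{vi}$ to solve for the shared prefactor and substituting yields exactly (\ref{Corollary:NumberOfChainsStringType:Eq}) in the nondegenerate case.

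For the degenerate case $N^{(n)}_{iv}(\mathcal{E}_{\{\nu_{(i,j)}\}})=0$, any length-$n$ string of chain type satisfying the original frequencies with head $i$ and endpoint $v$ is nonexistent, and in particular no such string can start with $X_1=(i,j)$; hence $\text{\#}^{(i,j)}_1(\mathcal{E}_{\{\nu_{(i,j)}\}})=0$, as asserted. The main obstacle I anticipate is the bookkeeping in the second step: the five cases in (\ref{Eq:WhittleFormulaF*Tilde}) must be matched faithfully with the Whittle matrix of $\mathcal{E}'$, including the boundary cases where the reduction empties row $i$ or where $F^*_{vi}=0$ (in which the ratio in (\ref{Corollary:NumberOfChainsStringType:Eq}) has to be interpreted through the degenerate alternative above). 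Once the identification of $\widetilde F^*$ is nailed down, the remainder of the argument is an algebraic comparison of two multinomial expressions.
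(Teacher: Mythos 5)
Your proposal is correct and follows essentially the same route as the paper's own proof: peel off the first transition to reduce to a length-$(n-1)$ counting problem with decremented frequency $\nu_{(i,j)}-1$, identify $\widetilde F^*$ as the Whittle matrix of the reduced event, apply Theorem \ref{Thm:WhittleFormula} to both events, and cancel the multinomial prefactors to obtain the ratio $\nu_{(i,j)}/\sum_k\nu_{(i,k)}$. The degenerate case is also handled in the same spirit, so there is nothing substantive to add.
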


\begin{proof}
Given $\mathcal{E}_{\{\nu_{(i,j)}\}}$ and some fixed $i, j\in \{1,2,...,N\}$, let us first suppose that at least one string of chain type $X_1=(i, j), X_2=(j, i_3), ..., X_n=(i_n, i_{n+1})$ that satisfies $\mathcal{E}_{\{\nu_{(i,j)}\}}$ exists, which implies that $\text{\#}^{(i,j)}_1(\mathcal{E}_{\{\nu_{(i,j)}\}})\geq 1$. It is easy to observe that the sequence $X_2=(j, i_3), ..., X_n=(i_n, i_{n+1})$ forms a string of chain type that satisfies the frequency event $\mathcal{E}_{\{\widetilde{\nu}_{(\widetilde{i},\widetilde{j})}\}}$ with 
\begin{equation}\label{Corollary:NumberOfChainsStringType:Eq:ModifiedFrequencies}
\widetilde{\nu}_{(\widetilde{i},\widetilde{j})}=
\left\{\begin{array}{ll}
\nu_{(\widetilde{i}, \widetilde{j})} \ , & \text{ if } (\widetilde{i}, \widetilde{j}) \neq (i,j) \ ,
\\
\nu_{(\widetilde{i}, \widetilde{j})}-1 \ , & \text{ if } (\widetilde{i}, \widetilde{j}) = (i,j) \ .
\end{array}\right.
\end{equation}

We note that, when $\mathcal{E}_{\{\nu_{(i,j)}\}}$ and the starting state $i$ are fixed, then the final state $i_{n+1}=v\in \{1,2,...,N\}$ will be fixed, and thus $N_{iv}^{(n)}(\mathcal{E}_{\nu_{(i,j)}})\geq 1$. Let us also note that $\mathcal{E}_{\{\widetilde{\nu}_{(\widetilde{i}, \widetilde{j})}\}}$ must satisfy (\ref{Eq:StringOfChainTypeFrequencyBalance}) in the form of 
$$\sum\limits_{\widetilde{j}=1}^{N} \widetilde{\nu}_{(\widetilde{i},\widetilde{j})} - \sum\limits_{\widetilde{j}=1}^{N} \widetilde{\nu}_{(\widetilde{j},\widetilde{i})}  =\mathbf{1}_{\{\widetilde{i}=j\}}-\mathbf{1}_{\{\widetilde{i}=v\}} \ .
$$
This means that given $\mathcal{E}_{\{\widetilde{\nu}_{(\widetilde{i}, \widetilde{j})}\}}$ and the starting state $j$, the final state must be $v$. Thus it is easy to see that
\begin{equation}\label{Corollary:NumberOfChainsStringType:Eq:NumberOfAsmissiblePathsDecomposition}  
\#_1^{(i,j)}(\mathcal{E}_{\nu_{(i,j)}})=N_{jv}^{(n-1)}(\mathcal{E}_{\{\widetilde{\nu}_{(\widetilde{i},\widetilde{j})}\}}) \ .
\end{equation}

According to Theorem \ref{Thm:WhittleFormula}, the RHS of (\ref{Corollary:NumberOfChainsStringType:Eq:NumberOfAsmissiblePathsDecomposition}) is equal to 
\begin{equation}\label{Corollary:NumberOfChainsStringType:Eq:ModifiedNumberAdmissiblePaths} 
N_{jv}^{(n-1)}(\mathcal{E}_{\{\widetilde{\nu}_{(\widetilde{i},\widetilde{j})}\}})=
\dfrac{\prod\limits_{\widetilde{i}=1}^{N} \left(\sum\limits_{\widetilde{j}=1}^{N} \widetilde{\nu}_{(\widetilde{i},\widetilde{j})}\right)!}
{\prod\limits_{\widetilde{i}=1}^{N}\prod\limits_{\widetilde{j}=1}^{N} \widetilde{\nu}_{(\widetilde{i},\widetilde{j})}!} \widetilde{F}_{vj}^* \ ,
\end{equation}
where $\widetilde{F}_{vj}^*$ is the $(v,j)$-th cofactor of the matrix $\widetilde{F}^*=(\widetilde{\nu}^*_{\widetilde{i} \, \widetilde{j}})_{1\leq \widetilde{i}, \widetilde{j}\leq N}$, with

\begin{equation}\label{Corollary:NumberOfChainsStringType:Eq:WhittleFormulaF*Tilde} 
\widetilde{\nu}_{\widetilde{i} \, \widetilde{j}}^*=\left\{
\begin{array}{lll}
\mathbf{1}_{\{\widetilde{i}=\widetilde{j}\}}-\dfrac{\widetilde{\nu}_{(\widetilde{i},\widetilde{j})}}{\sum\limits_{\widetilde{j}=1}^{N} \widetilde{\nu}_{(\widetilde{i}, \widetilde{j})}} \ , & \text{ if } & \sum\limits_{\widetilde{j}=1}^{N} \widetilde{\nu}_{(\widetilde{i},\widetilde{j})}>0 \ ,  
\\
\mathbf{1}_{\{\widetilde{i}=\widetilde{j}\}} \ , & \text{ if } & \sum\limits_{\widetilde{j}=1}^{N} \widetilde{\nu}_{(\widetilde{i},\widetilde{j})}=0 \ .
\end{array}\right.
\end{equation}
It is then easy to see that (\ref{Corollary:NumberOfChainsStringType:Eq:WhittleFormulaF*Tilde}) is the same as (\ref{Eq:WhittleFormulaF*Tilde}) due to (\ref{Corollary:NumberOfChainsStringType:Eq:ModifiedFrequencies}).

By (\ref{Corollary:NumberOfChainsStringType:Eq:ModifiedFrequencies}), we also know that
\begin{equation}\label{Corollary:NumberOfChainsStringType:Eq:ModifiedFrequenciesSum1}
\sum\limits_{\widetilde{j}=1}^N \widetilde{\nu}_{(\widetilde{i}, \widetilde{j})}
=
\left\{\begin{array}{ll}
\sum\limits_{j=1}^N \nu_{(i,j)} \ , & \text{ if } \widetilde{i}\neq i \ ,
\\
\sum\limits_{j=1}^N \nu_{(i,j)}-1 \ , & \text{ if } \widetilde{i}=i \ .
\end{array}\right.
\end{equation}
Therefore
\begin{equation}\label{Corollary:NumberOfChainsStringType:Eq:ModifiedFrequenciesSum2}
\prod\limits_{\widetilde{i}=1}^{N}
\left(\sum\limits_{\widetilde{j}=1}^N 
\widetilde{\nu}_{(\widetilde{i}, \widetilde{j})}\right)!
=
\dfrac{\prod\limits_{i=1}^{N}
\left(\sum\limits_{j=1}^N 
\nu_{(i, j)}\right)!}
{\sum\limits_{j=1}^N 
\nu_{(i, j)}} \ .
\end{equation}
Moreover, by (\ref{Corollary:NumberOfChainsStringType:Eq:ModifiedFrequencies}) we know that 
\begin{equation}\label{Corollary:NumberOfChainsStringType:Eq:ModifiedFrequenciesSum3}
\prod\limits_{\widetilde{i}=1}^{N}\prod\limits_{\widetilde{j}=1}^{N} \widetilde{\nu}_{(\widetilde{i},\widetilde{j})}!
=
\dfrac{\prod\limits_{i=1}^{N} \prod\limits_{j=1}^{N} \nu_{(i,j)}!}{\nu_{(i,j)}} \ .
\end{equation}

Putting (\ref{Corollary:NumberOfChainsStringType:Eq:ModifiedFrequenciesSum2}), (\ref{Corollary:NumberOfChainsStringType:Eq:ModifiedFrequenciesSum3}) into (\ref{Corollary:NumberOfChainsStringType:Eq:ModifiedNumberAdmissiblePaths}) and making use of (\ref{Thm:WhittleFormula:Eq}) we know that 
\begin{equation}\label{Corollary:NumberOfChainsStringType:Eq:ModifiedNumberAdmissiblePathsFinalForm}  
N_{jv}^{(n-1)}(\mathcal{E}_{\{\widetilde{\nu}_{(\widetilde{i},\widetilde{j})}\}})=\dfrac{\nu_{(i,j)}}
{\sum\limits_{j=1}^N \nu_{(i,j)}}\cdot 
N_{iv}^{(n)}(\mathcal{E}_{\{\nu_{(i,j)}\}})\cdot 
\dfrac{\widetilde{F}_{vj}^*}{F_{vi}^*} \ .
\end{equation} 
where $F^*_{vi}$ is the $(v,i)$-th cofactor of the matrix $F^*=(\nu_{ij})_{1\leq i,j\leq N}$ with elements defined in (\ref{Eq:WhittleFormulaF*}); $\widetilde{F}^*_{vj}$ is the $(v,j)$-th cofactor of the matrix $\widetilde{F}^*=(\widetilde{\nu}_{(\widetilde{i} \, \widetilde{j})})_{1\leq \widetilde{i} \, \widetilde{j} \leq N}$ with elements defined in (\ref{Corollary:NumberOfChainsStringType:Eq:WhittleFormulaF*Tilde}); and $N_{iv}^{(n)}(\mathcal{E}_{\{\nu_{(i,j)}\}})$ is the quantity defined in the Whittle's formula (\ref{Thm:WhittleFormula:Eq}) with $u=i, v=v$. 
Combining (\ref{Corollary:NumberOfChainsStringType:Eq:NumberOfAsmissiblePathsDecomposition}) and (\ref{Corollary:NumberOfChainsStringType:Eq:ModifiedNumberAdmissiblePathsFinalForm}) we get (\ref{Corollary:NumberOfChainsStringType:Eq}) as desired.

Finally, let us consider the case when no string of chain type $X_1=(i, j), X_2=(j, i_3), ..., X_n=(i_n, i_{n+1})$ that satisfies $\mathcal{E}_{\{\nu_{(i,j)}\}}$ exists, which is the same as saying that $\text{\#}^{(i,j)}_1(\mathcal{E}_{\{\nu_{(i,j)}\}})=0$. In this case, it is easy to see that $N_{jv}^{(n-1)}(\mathcal{E}_{\{\widetilde{\nu}_{(\widetilde{i},\widetilde{j})}\}})=0$. If  $N_{iv}^{(n)}(\mathcal{E}_{\{\nu_{(i,j)}\}})\geq 1$, then by (\ref{Corollary:NumberOfChainsStringType:Eq:ModifiedNumberAdmissiblePathsFinalForm}) and the Whittle's formula for $N_{iv}^{(n)}(\mathcal{E}_{\{\nu_{(i,j)}\}})$ we know that $\widetilde{F}^*_{vj}=0$ and $F^*_{vi}>0$, which indicates that (\ref{Corollary:NumberOfChainsStringType:Eq}) is still correct. If $N_{iv}^{(n)}(\mathcal{E}_{\{\nu_{(i,j)}\}})=0$, we apply the second part of (\ref{Corollary:NumberOfChainsStringType:Eq}) and we know that it is still correct. 
\end{proof}

We provide here another technical Lemma regarding the $(v,u)$-cofactor $F^*_{vu}$ of the matrix $F^*=(\nu^*_{ij})_{1\leq i,j\leq N}$ defined in (\ref{Eq:WhittleFormulaF*}). A version of the same Lemma can be found in \cite[Lemma 4]{IEEEConditioning}. For comprehensiveness, we will also provide a short proof of this Lemma.

\begin{lemma}\label{Lm:WhittleFormulaF*CofactorSame}
Suppose that for each $1\leq v\leq N$ we have $\sum\limits_{k=1}^N \nu_{(v,k)}>0$. Then for each $1\leq v\leq N$ and each pair $1\leq i,j\leq N$ we have $F^*_{vi}=F^*_{vj}$.
\end{lemma}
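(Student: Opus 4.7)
The plan is to exploit the combination of two standard facts: that the rows of $F^*$ sum to zero, and the adjugate identity $F^*\cdot\mathrm{adj}(F^*)=\det(F^*)\,I$. The statement $F^*_{vi}=F^*_{vj}$ says precisely that column $v$ of the adjugate has all entries equal (since $\mathrm{adj}(F^*)_{iv}=F^*_{vi}$, the $(v,i)$-cofactor), equivalently, that column $v$ is a scalar multiple of $\mathbf{1}=(1,\dots,1)^T$.

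\textbf{Step 1 (row sums).} Under the hypothesis $\sum_{k=1}^N \nu_{(i,k)}>0$ for every $i$, the $i$-th row sum of $F^*$ computes to
\[
\sum_{j=1}^N \nu^*_{ij} \;=\; 1 - \sum_{j=1}^N \frac{\nu_{(i,j)}}{\sum_{k=1}^N \nu_{(i,k)}} \;=\; 1-1 \;=\; 0.
\]
Hence $F^*\mathbf{1}=0$, so $\mathbf{1}\in\ker(F^*)$ and in particular $\det(F^*)=0$.

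\textbf{Step 2 (adjugate).} Combining $\det(F^*)=0$ with the universal identity $F^*\cdot\mathrm{adj}(F^*)=\det(F^*)\,I$ gives $F^*\cdot\mathrm{adj}(F^*)=0$. Therefore every column of $\mathrm{adj}(F^*)$ lies in $\ker(F^*)$. Using the cofactor convention $\mathrm{adj}(F^*)_{ij}=F^*_{ji}$, the $v$-th column of $\mathrm{adj}(F^*)$ is $(F^*_{v1},F^*_{v2},\ldots,F^*_{vN})^T$, and we have just shown it belongs to $\ker(F^*)$.

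\textbf{Step 3 (case split on rank).} If $\mathrm{rank}(F^*)=N-1$, then $\ker(F^*)$ is one-dimensional; since it already contains the nonzero vector $\mathbf{1}$, we have $\ker(F^*)=\mathrm{span}(\mathbf{1})$. Consequently the vector $(F^*_{v1},\ldots,F^*_{vN})^T$ must be a scalar multiple of $\mathbf{1}$, i.e.\ $F^*_{vi}=F^*_{vj}$ for all $i,j$. If instead $\mathrm{rank}(F^*)\leq N-2$, every $(N-1)\times(N-1)$ submatrix of $F^*$ also has rank at most $N-2$ and is therefore singular, so all cofactors $F^*_{vi}$ vanish and the equality holds trivially. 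In either case the conclusion follows.

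There is no serious obstacle here; the only thing that requires a little care is the possibility that $F^*$ has rank strictly less than $N-1$, which is handled by the trivial branch of Step 3. The proof essentially mirrors the classical Matrix-Tree argument where rows summing to zero force all cofactors in a fixed row to coincide.
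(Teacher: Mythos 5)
Your proof is correct, and it takes a genuinely different route from the paper's. The paper argues directly on the minors: after fixing $v=1$, $i=1$, $j=2$ without loss of generality, it writes down the two $(N-1)\times(N-1)$ submatrices whose determinants give $F^*_{11}$ and $-F^*_{12}$, adds all the other columns of the first submatrix into its first column, and uses the zero row sums to recognize the result as the second submatrix with its first column negated; this yields $F^*_{11}=F^*_{12}$ by invariance of the determinant under column addition. You instead package the same zero-row-sum fact as $F^*\mathbf{1}=0$, invoke the adjugate identity $F^*\,\mathrm{adj}(F^*)=\det(F^*)I=0$ to place the vector of cofactors $(F^*_{v1},\dots,F^*_{vN})^T$ in $\ker(F^*)$, and conclude by a rank dichotomy. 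Both arguments rest on the same underlying observation, but yours is more structural: it explains at a glance \emph{why} the cofactors along a row must coincide (they form a kernel vector, and $\mathbf{1}$ spans the kernel in the generic case), it handles the degenerate case $\mathrm{rank}(F^*)\le N-2$ explicitly and uniformly, and it situates the lemma inside the familiar Matrix-Tree/Markov-chain-tree circle of ideas. The paper's column-operation proof is more elementary and needs no case split (the manipulation is valid regardless of rank), at the cost of being a computation one must simply verify. Your bookkeeping of the adjugate convention $\mathrm{adj}(F^*)_{iv}=F^*_{vi}$ is correct, so there is no gap.
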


\begin{proof}
Recall that the matrix $F^*=(\nu_{ij}^*)_{1\leq i, j\leq N}$ has the matrix elements $\nu^*_{ij}$ that we defined in (\ref{Eq:WhittleFormulaF*}). Under our assumption we find that for each $1\leq v\leq N$ we have
\begin{equation}\label{Lm:WhittleFormulaF*CofactorSame:Eq:F*RowSum0}
\sum\limits_{l=1}^N \nu^*_{vl} = 0 \ .
\end{equation}
To prove the statement of this lemma, without loss of generality we can assume that $v=1$, $i=1$ and $j=2$. Set the matrices
$$\mathcal{F}^*_{11}=\begin{pmatrix}
\nu_{22}^* & \nu_{23}^* & ... & \nu_{2N}^*
\\
\nu_{32}^* & \nu_{33}^* & ... & \nu_{3N}^*
\\
...
\\
\nu_{N2}^* & \nu_{N3}^* & ... & \nu_{NN}^*
\end{pmatrix} 
\ , \
\mathcal{F}^*_{12}=\begin{pmatrix}
\nu_{21}^* & \nu_{23}^* & ... & \nu_{2N}^*
\\
\nu_{31}^* & \nu_{33}^* & ... & \nu_{3N}^*
\\
...
\\
\nu_{N1}^* & \nu_{N3}^* & ... & \nu_{NN}^*
\end{pmatrix} \ .$$
Then $F^*_{11}=\det\mathcal{F}^*_{11}$ and $F^*_{12}=-\det \mathcal{F}^*_{12}$. It is then easy to see that by adding each of the second, third, ... , until the last columns of the matrix $\mathcal{F}^*_{11}$ to its first column, and using (\ref{Lm:WhittleFormulaF*CofactorSame:Eq:F*RowSum0}), we arrive at the matrix $-\mathcal{F}_{12}^*$. So we conclude that $F_{11}^*=F_{12}^*$. For any other $1\leq i, j \leq N$, the argument is the same. Thus the Lemma is proved. 
\end{proof}

\section{The infinite-sample limit of the Markov chain case}\label{Sec:InfiniteTimeLimitMarkovChain}

In this section, by making use of Theorem \ref{Thm:Prop:MarkovChainPosteriorEqualsPrior}, we tend to investigate the limit of the posterior probability $\mathbf{P}\big(X_1=(i,j) \big|\mathcal{E}_{\{\nu_{(i,j)}\}}\big)$ as the number of observations $n$ tends to infinity. Recall that we have introduced in Section \ref{Sec:MarkovChain} the Markov chain $\{Y_\ell\}_{\ell\geq 1}$ with finite state space $\Sigma=\{1,...,N\}$, $|\Sigma|=N$. Let the transition probability matrix of the process $\{Y_\ell\}_{\ell\geq 1}$ be given by $P=(p_{ij})_{1\leq i, j\leq N}$.
Assume the process starts from an initial probability distribution $\pi^0=(\pi^0_1,...,\pi^0_N)$, $0\leq \pi^0_i\leq 1$, $\sum\limits_{i=1}^N \pi^0_i=1$, such that $\mathbf{P}(Y_1=i)=\pi^0_i$.  Within this section, we will work under the following

\begin{assumption}[Positive Transition Probabilities]\label{Assmption:PositiveTransition}
The Markov chain $\{Y_\ell\}_{\ell\geq 1}$ has positive transition probabilities, i.e. each transition probability $p_{ij}>0$ for any $1\leq i, j\leq N$. 
Thus the Markov chain $\{Y_{\ell}\}_{\ell\geq 1}$ is ergodic, and its invariant measure is given by $\pi_i>0$, $i=1,2,...,N$ with $\sum_{i=1}^N \pi_i=1$. 
\end{assumption}

Let us fix the number of states $N$ and the number of observations $n$, so that we consider the Markov chain $Y_1,...,Y_n,Y_{n+1}$ and the ``consecutive pair" process $X_\ell=(Y_\ell, Y_{\ell+1})$ for $1\leq \ell \leq n+1$ as in Section \ref{Sec:MarkovChain}. Under this assumption, the number of trajectories $(Y_1,...,Y_{n+1})$ is finite and is equal to $N^{n+1}$. Since each trajectory corresponds to a frequency event $\mathcal{E}_{\{\nu_{(i,j)}\}}$ as defined in (\ref{Eq:MarkovChainSampleFrequencyEvent}), and two trajectories may correspond to the same frequency event \footnote{For example: when the chain $\{Y_\ell\}_{\ell \geq 1}$ is given by $1\rightarrow 1 \rightarrow 1 \rightarrow 2 \rightarrow 1$ or $1\rightarrow 2 \rightarrow 1 \rightarrow 1 \rightarrow 1$, both cases will correspond to $\mathcal{E}_{\{\nu_{(i,j)}\}}=\{\nu_{(1,2)}=\nu_{(2,1)}=1, \nu_{(1,1)}=2, \nu_{(i,j)}=0 \text{ for all other pairs of } (i,j)\}$.}, we see that the number of different frequency events $\mathcal{E}_{\{\nu_{(i,j)}\}}$ will not exceed $N^{n+1}$ and thus will be finite. Furthermore, it is easy to observe that two different frequency events must be disjoint. Thus we label all possible frequency events by the index $\lambda\in \Lambda$ where $\Lambda$ is a finite set, and we denote each frequency event by $\mathcal{E}_{\{\nu_{(i,j)}\}}^\lambda$. We claim that in the limit we have

\begin{theorem}[Asymptotic of the posterier probability]\label{Thm:AsymptoticPosterior}
For any $\varepsilon>0$ small enough, there exist some $M\geq 1$ and some $n_0=n_0(\varepsilon,M) \in \mathbb{N}$ 
such that for any $n\geq n_0$, there exists a family of frequency
events $\mathcal{E}_{\{\nu_{(i,j)}\}}^\lambda, \lambda\in \widetilde{\Lambda}\subseteq \Lambda$ such that
\begin{equation}\label{Thm:AsymptoticPosterior:Eq:FrequencyEventFullProb}
\mathbf{P}\left(\bigcup\limits_{\lambda\in \widetilde{\Lambda}} \mathcal{E}_{\{\nu_{(i,j)}\}}^\lambda\right)\geq 1-\dfrac{\varepsilon}{M} \ ,
\end{equation}
and for each frequency event $\mathcal{E}_{\{\nu_{(i,j)}\}}^\lambda$, $\lambda\in \widetilde{\Lambda}$, 
the posterior probability of $X_1$ conditioned on $\mathcal{E}_{\{\nu_{(i,j)}\}}^\lambda$ 
is close to the unconditioned probability of $X_1$, i.e. for any $1\leq i, j\leq N$ we have
\begin{equation}\label{Thm:AsymptoticPosterior:Eq:PosteriorEqualsPrior}
\left|\mathbf{P}\big(X_1=(i,j) \big|\mathcal{E}_{\{\nu_{(i,j)}\}}^\lambda\big) - 
\dfrac{\mathbf{1}^{i, \checkmark}_1 \pi^0_i}{\sum_{k_1=1}^N \mathbf{1}^{k_1, \checkmark}_1 \pi^0_{k_1}}\cdot p_{ij}\right|<\varepsilon \ .
\end{equation}
\end{theorem}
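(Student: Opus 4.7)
The plan is to take $\widetilde{\Lambda}$ to be the set of indices corresponding to ``typical'' frequency events, namely those for which the empirical pair-frequencies $\nu_{(i,j)}/n$ are uniformly close to their ergodic means $\pi_i p_{ij}$. Under Assumption~\ref{Assmption:PositiveTransition} the chain $\{X_\ell\}$ inherits irreducibility and aperiodicity from $\{Y_\ell\}$, so its pair-empirical measure satisfies a strong law of large numbers (equivalently a level-2 LDP); hence, given $\varepsilon$ and $M$, for any $\delta>0$ I can choose $n_0=n_0(\varepsilon,M,\delta)$ so that the probability that $|\nu_{(i,j)}/n-\pi_i p_{ij}|<\delta$ for \emph{every} pair $(i,j)$ is at least $1-\varepsilon/M$. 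Taking $\widetilde{\Lambda}$ to consist of precisely these frequency events gives (\ref{Thm:AsymptoticPosterior:Eq:FrequencyEventFullProb}). Note that on $\widetilde{\Lambda}$ every $\nu_{(i,j)}\ge 1$, so every state is admissible in the sense of Definition~\ref{Def:ConditionalAdmissibleStates} and the hypotheses of Corollary~\ref{Corollary:NumberOfChainsStringType} are met for all $(i,j)$.

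Next, I would substitute Corollary~\ref{Corollary:NumberOfChainsStringType} into Theorem~\ref{Thm:Prop:MarkovChainPosteriorEqualsPrior}. Writing $\widetilde{F}^{*(k)}$ for the modified matrix (\ref{Eq:WhittleFormulaF*Tilde}) built from the choice of second coordinate $k$, the factors $N^{(n)}_{iv}(\mathcal{E}^\lambda)$, $F^*_{vi}$ and $\sum_k\nu_{(i,k)}$ do not depend on the summation index and cancel from the ratio, giving
\begin{equation*}
\mathbf{P}\bigl(X_1=(i,j)\bigm|\mathcal{E}^\lambda_{\{\nu_{(i,j)}\}}\bigr)
=\frac{\mathbf{1}^{i,\checkmark}_1\pi_i^0}{\sum_{k_1}\mathbf{1}^{k_1,\checkmark}_1\pi_{k_1}^0}\cdot
\frac{\nu_{(i,j)}\,\widetilde{F}^{*(j)}_{vj}}{\sum_{k_2}\nu_{(i,k_2)}\,\widetilde{F}^{*(k_2)}_{vk_2}}.
\end{equation*}
Thus the task is reduced to showing that the second factor is $p_{ij}+o(1)$ on $\widetilde{\Lambda}$.

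The heart of the argument is a perturbation estimate for these cofactors. On $\widetilde{\Lambda}$ every entry of $F^*$ and of each $\widetilde{F}^{*(k)}$ is a Lipschitz function of the empirical pair-proportions $\nu_{(i,j)}/\sum_l \nu_{(i,l)}$ (with corrections of order $1/\nu_{(i,k)}$), so both $F^*$ and $\widetilde{F}^{*(k)}$ converge entrywise to the limit matrix $F^{\infty}=I-P$ at rate $O(\delta)$, uniformly in the parameter $k$. Since cofactors are polynomials in the entries of the matrix, this gives $\widetilde{F}^{*(k)}_{v\ell}=F^{\infty}_{v\ell}+O(\delta)$ uniformly in $k,\ell$. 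The matrix $F^{\infty}$ has all row sums equal to zero, so Lemma~\ref{Lm:WhittleFormulaF*CofactorSame} (applied to $F^\infty$ in place of $F^*$) shows that $F^{\infty}_{v\ell}=\phi_v$ is independent of $\ell$; under Assumption~\ref{Assmption:PositiveTransition} the matrix $I-P$ has rank exactly $N-1$, and the matrix-tree theorem identifies $\phi_v$ (up to a common sign) with a positive linear combination of spanning-arborescence weights, so $\phi_v\ne 0$. Consequently $\widetilde{F}^{*(j)}_{vj}/\widetilde{F}^{*(k_2)}_{vk_2}=1+O(\delta)$ uniformly.

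Combining this with the typical-set estimate $\nu_{(i,j)}/\sum_k\nu_{(i,k)}=p_{ij}+O(\delta)$ yields
\begin{equation*}
\frac{\nu_{(i,j)}\,\widetilde{F}^{*(j)}_{vj}}{\sum_{k_2}\nu_{(i,k_2)}\,\widetilde{F}^{*(k_2)}_{vk_2}}=p_{ij}+O(\delta),
\end{equation*}
so choosing $\delta$ small enough (and hence $n_0$ large enough) brings the error below $\varepsilon$, proving (\ref{Thm:AsymptoticPosterior:Eq:PosteriorEqualsPrior}). The main obstacle I anticipate is making the cofactor step genuinely quantitative: one must verify that $\phi_v$ is bounded away from zero by a constant depending only on $P$ (not on $n$ or the particular $\lambda\in\widetilde{\Lambda}$), so that the multiplicative perturbation in the cofactor ratio can be controlled; this is precisely where the positivity of the transition probabilities in Assumption~\ref{Assmption:PositiveTransition} enters in an essential way, and it also dictates how $M$ is ultimately chosen in terms of $\min_v\phi_v$ and $\min_{i,j}\pi_i p_{ij}$.
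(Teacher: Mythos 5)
Your proposal is correct, and it takes the ``direct'' route that the paper explicitly mentions in Remark \ref{Remark:AlternateProof:Prop:MarkovChainPosteriorEqualsPrior} but does not carry out: you substitute Corollary \ref{Corollary:NumberOfChainsStringType} straight into Theorem \ref{Thm:Prop:MarkovChainPosteriorEqualsPrior} and analyze the resulting cofactor ratio, whereas the paper routes the argument through the lifted probability space of Proposition \ref{Prop:MarkovChainPosteriorEqualsPrior} and an intermediate ``asymptotic conditional symmetry'' statement (Lemma \ref{Lm:AsymptoticConditionalSymmetry}), then assembles the theorem via the factorization $\mathbf{P}(X_1=(i,j)\,|\,\mathcal{E}^\lambda)=\mathbf{P}(Y_1=i\,|\,\mathcal{E}^\lambda)\cdot\mathbf{P}(X_1=(i,j)\,|\,\mathcal{E}^\lambda,Y_1=i)$ and a triangle inequality splitting the error into a ``symmetry'' part (Lemma \ref{Lm:AsymptoticMarkovChainPosteriorEqualsPrior}) and an ergodic part (Lemma \ref{Lm:AsymptoticFrequencyEvents}). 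The technical core is the same in both: the typical-set selection of $\widetilde{\Lambda}$ via the law of large numbers, Whittle's formula, the entrywise convergence $\widetilde{F}^*\to F^*$ (Lemma \ref{Lm:WhittleFormulaF*F*TildeAsymptoticallySame}), and the zero-row-sum cofactor identity (Lemma \ref{Lm:WhittleFormulaF*CofactorSame}). What the paper's detour buys is the conceptual point emphasized in Remark \ref{Remark:MarkovChainAsymptoticConditionalSymmetryAtLevelOfObservations} --- that path-level symmetry degenerates to observation-level symmetry --- at the cost of extra scaffolding; what your route buys is brevity and, notably, an explicit treatment of a quantitative point the paper glosses over: to turn $\widetilde{F}^{*}_{vj}/\widetilde{F}^{*}_{vk}\to 1$ into a uniform $1+O(\delta)$ estimate over all $\lambda\in\widetilde{\Lambda}$ and all large $n$, one needs the limiting cofactor $\phi_v$ of $I-P$ bounded away from zero by a constant depending only on $P$, which your matrix-tree argument supplies (the paper only gets $F^*_{vi}>0$ for each fixed event from positivity of $N^{(n)}_{iv}$). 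Your identification of that as the delicate step, and your resolution of it, are both sound.
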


\begin{proof}
By using the conditional probability formula (\ref{Thm:Prop:MarkovChainPosteriorEqualsPrior:Eq:ConditionalProbabilityFormula}) we have 
\begin{equation}\label{Thm:AsymptoticPosterior:Eq:ConditionalProbabilityEstimate}
\begin{array}{ll}
& \left|\mathbf{P}\big(X_1=(i,j) \big|\mathcal{E}_{\{\nu_{(i,j)}\}}^\lambda\big) - 
\dfrac{\mathbf{1}^{i, \checkmark}_1 \pi^0_i}{\sum_{k_1=1}^N \mathbf{1}^{k_1, \checkmark}_1 \pi^0_{k_1}}\cdot p_{ij}\right|
\smallskip
\\
= & \left|\mathbf{P}(Y_1=i|\mathcal{E}_{\{\nu_{(i,j)}\}}^\lambda)
\cdot \mathbf{P}\big(X_1=(i,j) \big|\mathcal{E}_{\{\nu_{(i,j)}\}}^\lambda, Y_1=i\big)-\dfrac{\mathbf{1}^{i, \checkmark}_1 \pi^0_i}{\sum_{k_1=1}^N \mathbf{1}^{k_1, \checkmark}_1 \pi^0_{k_1}}\cdot p_{ij}\right|
\\
\stackrel{\rm{(a)}}{=} & \left|\mathbf{P}(Y_1=i|\mathcal{E}_{\{\nu_{(i,j)}\}}^\lambda)
\cdot \mathbf{P}\big(X_1=(i,j) \big|\mathcal{E}_{\{\nu_{(i,j)}\}}^\lambda, Y_1=i\big)-\mathbf{P}(Y_1=i|\mathcal{E}_{\{\nu_{(i,j)}\}}^\lambda)\cdot p_{ij}\right|
\\
= & \mathbf{P}(Y_1=i|\mathcal{E}_{\{\nu_{(i,j)}\}}^\lambda)\cdot \left|\mathbf{P}\big(X_1=(i,j) \big|\mathcal{E}_{\{\nu_{(i,j)}\}}^\lambda, Y_1=i\big)-p_{ij}\right|
\\
\stackrel{\rm{(b)}}{\leq} & \left|\mathbf{P}\big(X_1=(i,j) \big|\mathcal{E}_{\{\nu_{(i,j)}\}}^\lambda, Y_1=i\big)-p_{ij}\right| \ ,
\end{array}
\end{equation}
where in (a) we have used (\ref{Prop:MarkovChainPosteriorEqualsPrior:Y_1:Eq}) in Proposition \ref{Prop:MarkovChainPosteriorEqualsPrior:Y_1} and in (b) we have used the fact that $\mathbf{P}(Y_1=i|\mathcal{E}_{\{\nu_{(i,j)}\}}^\lambda)\leq 1$. 

Given $\varepsilon>0$, we will prove in Lemma \ref{Lm:AsymptoticMarkovChainPosteriorEqualsPrior} that there exists some $M_1\geq 1$ and some
$n_0^{(1)}=n_0^{(1)}(\varepsilon, M_1)\in \mathbb{N}$, so that for any $n\geq n_0^{(1)}$ and any frequency event $\mathcal{E}_{\{\nu_{(i,j)}\}}^\lambda$, $\lambda\in \widetilde{\Lambda}$ chosen as in Lemma \ref{Lm:AsymptoticFrequencyEvents} with $\mu=\dfrac{\varepsilon}{M_1}$, we have
\begin{equation}\label{Thm:AsymptoticPosterior:Eq:EpsilonEstimate(1):Frequency}
\left|\mathbf{P}\big(X_1=(i,j) \big|\mathcal{E}_{\{\nu_{(i,j)}\}}^\lambda, Y_1=i\big)-\dfrac{\nu_{(i,j)}}{\sum_{k=1}^N \nu_{(i, k)}}\right|< \dfrac{\varepsilon}{2} \ .
\end{equation}
 
Here we identify each frequency event $\mathcal{E}_{\{\nu_{(i,j)}\}}^\lambda$, $\lambda\in \widetilde{\Lambda}$ as such that each of the ratios $\dfrac{\nu_{(i,j)}}{n}$ is close to $\pi_i p_{ij}$, where $\{\pi_i\}_{i=1}^N$ is the invariant measure of $\{Y_\ell\}_{\ell\geq 1}$ introduced in Assumption \ref{Assmption:PositiveTransition} (see Lemma \ref{Lm:AsymptoticFrequencyEvents} for details).

Notice that by the simple ergodic theorem for Markov chains (see \cite[Section 1.10]{NorrisMarkovChain}), there exists some $n_0^{(2)}=n_0^{(2)}(\varepsilon)\in \mathbb{N}$ such that for any $n\geq n_0^{(2)}$ we have
\begin{equation}\label{Thm:AsymptoticPosterior:Eq:EpsilonEstimate(2):Ergodic}
\left|\dfrac{\nu_{(i,j)}}{\sum_{k=1}^N \nu_{(i, k)}}-p_{ij}\right|< \dfrac{\varepsilon}{2} \ ,
\end{equation}
for sufficiently small $\varepsilon>0$. 

Using (\ref{Thm:AsymptoticPosterior:Eq:ConditionalProbabilityEstimate}), (\ref{Thm:AsymptoticPosterior:Eq:EpsilonEstimate(1):Frequency}) and (\ref{Thm:AsymptoticPosterior:Eq:EpsilonEstimate(2):Ergodic}) we know that if we pick $M=\max(M_1, M_2)$ and $n_0=\max(n_0^{(1)}, n_0^{(2)})$, then we can form the set of frequency events $\mathcal{E}_{\{\nu_{(i,j)}\}}^\lambda$, $\lambda\in \widetilde{\Lambda}$ satisfying (\ref{Thm:AsymptoticPosterior:Eq:FrequencyEventFullProb}) and for any $n\geq n_0$, any such frequency event $\mathcal{E}_{\{\nu_{(i,j)}\}}^\lambda$, we must have
$$\begin{array}{ll}
& \left|\mathbf{P}\big(X_1=(i,j) \big|\mathcal{E}_{\{\nu_{(i,j)}\}}^\lambda\big) - 
\dfrac{\mathbf{1}^{i, \checkmark}_1 \pi^0_i}{\sum_{k_1=1}^N \mathbf{1}^{k_1, \checkmark}_1 \pi^0_{k_1}}\cdot p_{ij}\right|
\smallskip
\\
\stackrel{\rm{(a)}}{\leq} & \left|\mathbf{P}\big(X_1=(i,j) \big|\mathcal{E}_{\{\nu_{(i,j)}\}}^\lambda, Y_1=i\big)-p_{ij}\right| 
\smallskip
\\ 
\stackrel{\rm{(b)}}{\leq} & \left|\mathbf{P}\big(X_1=(i,j) \big|\mathcal{E}_{\{\nu_{(i,j)}\}}^\lambda, Y_1=i\big)-\dfrac{\nu_{(i,j)}}{\sum_{k=1}^N \nu_{(i, k)}}\right| + \left|\dfrac{\nu_{(i,j)}}{\sum_{k=1}^N \nu_{(i, k)}}-p_{ij}\right|
\\
\stackrel{\rm{(c)}}{<} & \dfrac{\varepsilon}{2}+\dfrac{\varepsilon}{2} = \varepsilon \ ,
\end{array}$$
where in (a) we have used (\ref{Thm:AsymptoticPosterior:Eq:ConditionalProbabilityEstimate}), in (b) we have used the triangle inequality, and in (c) we have used (\ref{Thm:AsymptoticPosterior:Eq:EpsilonEstimate(1):Frequency}) and (\ref{Thm:AsymptoticPosterior:Eq:EpsilonEstimate(2):Ergodic}). 
This implies (\ref{Thm:AsymptoticPosterior:Eq:PosteriorEqualsPrior}). 
\end{proof}

\begin{remark}[$Y_1$ cannot be any state]\label{Remark:FirstPositionCannotBeAnyStateInfiniteSample}
\rm
It is not true that when the number of samples $n$ is large we must have $\mathbf{1}_1^{i, \checkmark}=1$ for any $i\in \{1,2,...,N\}$. That is to say, not all states are admissible at the first position. This is simply because of (\ref{Eq:StringOfChainTypeFrequencyBalance}), since when the frequency event $\mathcal{E}^\lambda_{\{\nu_{(i,j)}\}}$ is given, the initial state $i_1$ and the final state $i_{n+1}$ must satisfy (\ref{Eq:StringOfChainTypeFrequencyBalance}). Because of this reason, the term $\dfrac{\mathbf{1}^{i, \checkmark}_1 \pi^0_i}{\sum_{k_1=1}^N \mathbf{1}^{k_1, \checkmark}_1 \pi^0_{k_1}}\cdot p_{ij}$ in (\ref{Thm:AsymptoticPosterior:Eq:PosteriorEqualsPrior}) cannot be replaced by $\pi^0_i p_{ij}$.
\end{remark}

\begin{lemma}[Asymptotic of Proposition \ref{Prop:MarkovChainPosteriorEqualsPrior}]\label{Lm:AsymptoticMarkovChainPosteriorEqualsPrior}
For any $\varepsilon>0$ small enough, there exists some $M\geq 1$ and some $n_0=n_0(\varepsilon, M)\in \mathbb{N}$, 
so that for any $n\geq n_0$ and any of the frequency events $\mathcal{E}^{\lambda}_{\{\nu_{(i,j)}\}}$ chosen from $\lambda\in \widetilde{\Lambda}$ as in \emph{Lemma \ref{Lm:AsymptoticFrequencyEvents}} with $\mu=\dfrac{\varepsilon}{M}$, 
for any $1\leq i, j \leq N$ we have 
\begin{equation}\label{Lm:AsymptoticMarkovChainPosteriorEqualsPrior:Eq}
\left|\mathbf{P}\big(X_1=(i,j) \big|\mathcal{E}_{\{\nu_{(i,j)}\}}^\lambda, Y_1=i\big)
-\dfrac{\nu_{(i,j)}}{\sum_{k=1}^N \nu_{(i, k)}}\right|<\dfrac{\varepsilon}{2} \ .
\end{equation}
\end{lemma}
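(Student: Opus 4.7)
The plan is to take the explicit formula (\ref{Prop:MarkovChainPosteriorEqualsPrior:Eq}) from Proposition \ref{Prop:MarkovChainPosteriorEqualsPrior} and combine it with the Whittle-based formula for the count $\text{\#}^{(i,j)}_1(\mathcal{E}_{\{\nu_{(i,j)}\}})$ provided by Corollary \ref{Corollary:NumberOfChainsStringType}. After substitution, the common Whittle prefactor $N^{(n)}_{iv}(\mathcal{E}_{\{\nu_{(i,j)}\}})/F^*_{vi}$ cancels between numerator and denominator, reducing the posterior to
\[
\mathbf{P}\big(X_1=(i,j)\,\big|\,\mathcal{E}_{\{\nu_{(i,j)}\}}^\lambda, Y_1=i\big)
= \mathbf{1}^{i,\checkmark}_1 \cdot
\frac{\mathbf{1}^{j,\checkmark}_2 \cdot \nu_{(i,j)} \cdot \widetilde{F}^{*,j}_{vj}}
{\sum_{k_2=1}^{N} \mathbf{1}^{k_2,\checkmark}_2 \cdot \nu_{(i,k_2)} \cdot \widetilde{F}^{*,k_2}_{vk_2}},
\]
where I write $\widetilde{F}^{*,j}$ to make explicit that the perturbed matrix (\ref{Eq:WhittleFormulaF*Tilde}) depends on $j$. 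Thus matching the target $\nu_{(i,j)}/\sum_{k}\nu_{(i,k)}$ is equivalent to showing that the factors $\widetilde{F}^{*,k}_{vk}$ are essentially independent of $k$ on a high-probability collection of frequency events.

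I would first invoke Lemma \ref{Lm:AsymptoticFrequencyEvents} (with $\mu = \varepsilon/M$) to select the family $\widetilde{\Lambda}$ of typical frequency events satisfying $|\nu_{(i,j)}/n - \pi_i p_{ij}| < \mu$ uniformly in $(i,j)$. By Assumption \ref{Assmption:PositiveTransition}, the stationary transition masses $\pi_i p_{ij}$ are all strictly positive, so on $\widetilde{\Lambda}$ and for $n$ large enough every $\nu_{(i,j)}\ge 1$, which makes all the indicators $\mathbf{1}^{\,\cdot\,,\checkmark}_\cdot$ equal to one, and makes both matrices $F^*$ and $\widetilde{F}^{*,k}$ well-defined and bounded. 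An entrywise comparison of (\ref{Eq:WhittleFormulaF*Tilde}) with (\ref{Eq:WhittleFormulaF*}) yields $\|\widetilde{F}^{*,k}-F^*\|_\infty = O(1/n)$ uniformly over $\widetilde{\Lambda}$, because the only affected row (row $i$) has its entries modified by $O(1/\sum_k \nu_{(i,k)}) = O(1/n)$. Since cofactors are polynomial functions of matrix entries in a compact set, this gives $\widetilde{F}^{*,k}_{vk} = F^*_{vk} + O(1/n)$ uniformly in $k$.

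Next I would apply Lemma \ref{Lm:WhittleFormulaF*CofactorSame}: because every row sum of $F^*$ is zero on $\widetilde{\Lambda}$, the cofactors $F^*_{vk}$ do not depend on $k$, call their common value $F^*_v$. Moreover $F^*_v$ is bounded away from zero: on typical events $F^*_v > 0$ can be quantified using the continuity of cofactors at the limiting matrix, whose value is the nonzero cofactor associated with the ergodic transition matrix. With $\widetilde{F}^{*,k}_{vk} = F^*_v (1 + O(1/n))$, a straightforward algebraic manipulation shows
\[
\frac{\nu_{(i,j)}\,\widetilde{F}^{*,j}_{vj}}{\sum_{k}\nu_{(i,k)}\,\widetilde{F}^{*,k}_{vk}}
= \frac{\nu_{(i,j)}}{\sum_{k}\nu_{(i,k)}}\cdot \bigl(1 + O(1/n)\bigr),
\]
and choosing $M$ and $n_0$ so that the resulting error is below $\varepsilon/2$ gives the required bound (\ref{Lm:AsymptoticMarkovChainPosteriorEqualsPrior:Eq}).

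The main obstacle I anticipate is the uniform quantitative control of the ratio $\widetilde{F}^{*,j}_{vj}/F^*_{vi}$ across all the frequency events in $\widetilde{\Lambda}$, since these are determinants of perturbed matrices whose nondegeneracy must be established from ergodicity. The right way to handle this is to use the fact that on $\widetilde{\Lambda}$ the matrix $F^*$ is a small $O(\mu)$-perturbation of the deterministic matrix $(\mathbf{1}_{\{i=j\}} - p_{ij})$, whose $(v,u)$-cofactors are uniformly bounded below thanks to Assumption \ref{Assmption:PositiveTransition} together with Lemma \ref{Lm:WhittleFormulaF*CofactorSame}; then by continuity the same lower bound (up to a factor close to $1$) holds for $F^*_v$ and $\widetilde{F}^{*,k}_{vk}$ on $\widetilde{\Lambda}$, which is exactly what is needed to bound the error uniformly and pick $M$ and $n_0$ appropriately.
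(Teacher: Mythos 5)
Your proposal is correct, but it follows a genuinely different route from the one the paper actually writes out --- in fact it is precisely the ``alternate and seemingly simpler proof'' that the authors acknowledge in Remark \ref{Remark:AlternateProof:Prop:MarkovChainPosteriorEqualsPrior} and deliberately omit. The paper's own proof stays inside the lifted probability space of distinguished sequences $\mathscr{S}$ from the proof of Proposition \ref{Prop:MarkovChainPosteriorEqualsPrior}: it rewrites the posterior via (\ref{Prop:MarkovChainPosteriorEqualsPrior:Eq:PosterialMarginalDecomposition:Step:Counting}) in terms of the ratios $\gamma(k,j)$ of counts of sequences with prescribed first element, shows $|\gamma(k,j)-1|<\varepsilon/4$ via Lemma \ref{Lm:AsymptoticConditionalSymmetry}, and handles the indicator via Lemma \ref{Lm:AsymptoticIndicator}; the Whittle/cofactor analysis is pushed into those two lemmas. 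You instead substitute Corollary \ref{Corollary:NumberOfChainsStringType} directly into (\ref{Prop:MarkovChainPosteriorEqualsPrior:Eq}), cancel the common factor $N^{(n)}_{iv}/\bigl(F^*_{vi}\sum_k\nu_{(i,k)}\bigr)$, and reduce everything to showing that the cofactors $\widetilde{F}^{*,k}_{vk}$ are asymptotically equal to the common value $F^*_v$ of Lemma \ref{Lm:WhittleFormulaF*CofactorSame} and bounded away from zero. The ingredients are the same (Lemma \ref{Lm:WhittleFormulaF*F*TildeAsymptoticallySame} for the $O(1/n)$ entrywise perturbation, Lemma \ref{Lm:WhittleFormulaF*CofactorSame} for $k$-independence, continuity of cofactors at the limiting matrix $\mathbf{1}_{\{i=j\}}-p_{ij}$ for the uniform lower bound), so the two proofs are mathematically equivalent in substance; what the paper's route buys is the explicit exhibition of the ``asymptotic conditional symmetry'' (Remark \ref{Remark:MarkovChainAsymptoticConditionalSymmetryAtLevelOfObservations}), which is the conceptual point of the section, while your route is shorter and makes the quantitative $O(1/n)$ error rate visible.

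One small caution: your claim that on $\widetilde{\Lambda}$ all indicators $\mathbf{1}^{\,\cdot\,,\checkmark}_\cdot$ become $1$ merely because every $\nu_{(i,j)}\ge 1$ is not right as stated for the first position --- Remark \ref{Remark:FirstPositionCannotBeAnyStateInfiniteSample} points out that $\mathbf{1}^{i,\checkmark}_1$ can remain $0$ for some $i$ no matter how large $n$ is, because of the flow-balance constraint (\ref{Eq:StringOfChainTypeFrequencyBalance}). This does not damage your proof, since the lemma implicitly assumes $\{Y_1=i\}$ is compatible with $\mathcal{E}^\lambda_{\{\nu_{(i,j)}\}}$ (as does the paper's proof), and for $\mathbf{1}^{j,\checkmark}_2$ the positivity does follow --- but only from your later cofactor estimates (positivity of $\widetilde{F}^{*,j}_{vj}$ together with $N^{(n)}_{iv}\ge 1$), not from $\nu_{(i,j)}\ge 1$ alone. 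You should route that step through the quantitative cofactor bound rather than asserting it up front.
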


\begin{proof}
Suppose we have been given one of any frequency events 
$\mathcal{E}_{\{\nu_{(i,j)}\}}^\lambda$ with $\lambda\in \Lambda$ and also $Y_1=i$. Under this assumption, 
recall that in our proof of Proposition \ref{Prop:MarkovChainPosteriorEqualsPrior}, we have introduced the set $\mathcal{X}^{\text{distinguished}}$ in (\ref{Prop:MarkovChainPosteriorEqualsPrior:Eq:DistinguishedOccurances}) and all possible length-$n$ ordered sequences $\mathscr{S}$ consisting of distinguished elements of the form $((i,j),a)\in \mathcal{X}^{\text{distinguished}}$, such that the element-wise projection $\mathfrak{Y}$ with $((i,j),a)\stackrel{\mathfrak{Y}}{\rightarrow} (i,j)$ applied to each of the above sequence $\mathscr{S}$ results in a string of chain type $\mathfrak{Y}(\mathscr{S})$ with head $i$. 
By Fact 1' in the proof of Proposition \ref{Prop:MarkovChainPosteriorEqualsPrior}, we know that 
$$\begin{array}{ll}
& \mathscr{P}\left(\text{all possible }\mathscr{S} \text{ whose 1-st element is } ((i,j),a)\right)
\\
= & 
(\text{Number of all possible } \mathscr{S} \text{ whose 1-st element is } ((i,j),a))\cdot p
\end{array}$$
where $p\in (0,1)$ is independent of $((i,j), a)$. Since we have 
$$\sum\limits_{((i,j),a)\in \mathcal{X}^{\text{distinguished}}, i \text{ fixed}}
\mathscr{P}\left(\text{all possible }\mathscr{S} \text{ whose 1-st element is } ((i,j),a)\right)=1 \ ,$$
and by Fact 4' stated in the proof of Proposition \ref{Prop:MarkovChainPosteriorEqualsPrior}, indicating that $$(\text{Number of all possible } \mathscr{S} \text{ whose 1-st element is } ((i,k),a))$$ 
is independent of $a$, we obtain that for any choices of $1\leq a\leq \nu_{(i,k)}$, 
$$1=\sum\limits_{k=1}^N 
\nu_{(i,k)}\cdot (\text{Number of all possible } \mathscr{S} \text{ whose 1-st element is } ((i,k),a))\cdot p \ ,
$$
which gives
\begin{equation}\label{Lm:AsymptoticMarkovChainPosteriorEqualsPrior:Eq:Determine-p}
p=\dfrac{1}{\sum\limits_{k=1}^N 
\nu_{(i,k)}\cdot (\text{Number of all possible } \mathscr{S} \text{ whose 1-st element is } ((i,k),a))} \ .
\end{equation}

We recall the formula (\ref{Prop:MarkovChainPosteriorEqualsPrior:Eq:PosterialMarginalDecomposition:Step:Counting}) in the proof of Proposition \ref{Prop:MarkovChainPosteriorEqualsPrior} and we combine it with the above (\ref{Lm:AsymptoticMarkovChainPosteriorEqualsPrior:Eq:Determine-p}) to obtain
\begin{equation}\label{Lm:AsymptoticMarkovChainPosteriorEqualsPrior:Eq:Step:Counting}
\begin{array}{ll}
& \mathbf{P}(X_1=(i,j)|\mathcal{E}_{\{\nu_{(i,j)}\}}^\lambda , Y_1=i)
\\
= &  \mathbf{1}^{j, \checkmark}_2
\cdot \sum\limits_{a'=1}^{\nu_{(i,j)}} \mathscr{P}\left(\text{all possible }\mathscr{S} \text{ whose 1-st element is } ((i,j),a')\right)
\\
= & \mathbf{1}^{j, \checkmark}_2
\cdot \sum\limits_{a'=1}^{\nu_{(i,j)}} \dfrac{(\text{Number of all possible } \mathscr{S} \text{ whose 1-st element is } ((i,j),a'))}{\sum\limits_{k=1}^N 
\nu_{(i,k)}\cdot (\text{Number of all possible } \mathscr{S} \text{ whose 1-st element is } ((i,k),a))}
\\
= & \mathbf{1}^{j, \checkmark}_2
\cdot \sum\limits_{a'=1}^{\nu_{(i,j)}} \dfrac{1}{\sum\limits_{k=1}^N 
\nu_{(i,k)}\cdot \gamma(k,j)} \ ,
\end{array}  
\end{equation}
where we denote 
$$\gamma(k,j)=\frac{\text{Number of all possible } \mathscr{S} \text{ whose 1-st element is } ((i,k),a)}
{\text{Number of all possible } \mathscr{S} \text{ whose 1-st element is } ((i,j),a')} \ .$$
Notice that $\gamma(k,j)$ is independent of $a$ and $a'$. The finite existence of $\gamma(k,j)$ is guarentted by part (a) of Lemma \ref{Lm:AsymptoticConditionalSymmetry}.

Given any $\varepsilon>0$, by part (b) of Lemma \ref{Lm:AsymptoticConditionalSymmetry} we know that there exists some $M_1\geq 1$ and some $n_0^{(1)}=n_0^{(1)}(\varepsilon, M_1)\in \mathbb{N}$ such that for any $n\geq n_0^{(1)}$, 
for any of our given frequency events $\mathcal{E}_{\{\nu_{(i,j)}\}}^\lambda$, $\lambda\in \widetilde{\Lambda}$ chosen as in Lemma 1 with $\mu=\dfrac{\varepsilon}{M_1}$ and any $1\leq k,j\leq N$, that
$$|\gamma(k,j)-1|< \dfrac{\varepsilon}{4} \ .$$ 
This combined with (\ref{Lm:AsymptoticMarkovChainPosteriorEqualsPrior:Eq:Step:Counting}) enable us to estimate 
\begin{equation}\label{Lm:AsymptoticMarkovChainPosteriorEqualsPrior:Eq:FirstEstimate}
\begin{array}{ll}
& \left|\mathbf{P}(X_1=(i,j)|\mathcal{E}_{\{\nu_{(i,j)}\}}^\lambda , Y_1=i)-\mathbf{1}^{j, \checkmark}_2 
\dfrac{\nu_{(i,j)}}{\sum_{k=1}^N \nu_{(i,k)}}\right|
\\
\leq & \sum\limits_{a=1}^{\nu_{(i,j)}}\left|\dfrac{1}{\sum_{k=1}^N 
\nu_{(i,k)}\cdot \gamma(k,j)}-\dfrac{1}{\sum_{k=1}^N \nu_{(i,k)}}\right|
= \sum\limits_{a=1}^{\nu_{(i,j)}}
\dfrac{\sum_{k=1}^N \nu_{(i,k)}\left|\gamma(k,j)-1\right|}
{\left(\sum_{k=1}^N \nu_{(i,k)}\cdot \gamma(k,j)\right)
\cdot \left(\sum_{k=1}^N \nu_{(i,k)}\right)}
\\
< & \dfrac{\varepsilon}{4}\cdot \sum\limits_{a=1}^{\nu_{(i,j)}}
\dfrac{\sum_{k=1}^N \nu_{(i,k)}}
{\left(\sum_{k=1}^N \nu_{(i,k)}\cdot \gamma(k,j)\right)
\cdot \left(\sum_{k=1}^N \nu_{(i,k)}\right)}
= \dfrac{\varepsilon}{4} \cdot 
\dfrac{\nu_{(i,j)}}
{\sum_{k=1}^N \nu_{(i,k)}\cdot \gamma(k,j)}
\\
< & \dfrac{\frac{\varepsilon}{4}}{1-\frac{\varepsilon}{4}}\cdot \dfrac{\nu_{(i,j)}}
{\sum_{k=1}^N \nu_{(i,k)}}
\leq \dfrac{\frac{\varepsilon}{4}}{1-\frac{\varepsilon}{4}} \ .
\end{array}
\end{equation}
By Lemma \ref{Lm:AsymptoticIndicator} we know that there exists some $M_2\geq 1$ and some $n_0^{(2)}=n_0^{(2)}(\varepsilon, M_2)\in \mathbb{N}$ and when $n\geq n_0^{(2)}$, for any of our frequency events $\mathcal{E}_{\{\nu_{(i,j)}\}}^\lambda$, $\lambda\in \widetilde{\Lambda}$ chosen as in Lemma \ref{Lm:AsymptoticFrequencyEvents} with $\mu=\dfrac{\varepsilon}{M_2}$, we have 
$$|\mathbf{1}^{j, \checkmark}_2-1|=0 \ .$$ 

We pick $M=\max(M_1, M_2)$ and $n_0=\max(n_0^{(1)}, n_0^{(2)})$. By using (\ref{Lm:AsymptoticMarkovChainPosteriorEqualsPrior:Eq:FirstEstimate}), we can form the set of frequency events $\mathcal{E}_{\{\nu_{(i,j)}\}}^\lambda$, $\lambda\in \widetilde{\Lambda}$ satisfying Lemma \ref{Lm:AsymptoticFrequencyEvents} with $\mu=\dfrac{\varepsilon}{M}$ and for any $n\geq n_0$, any such frequency event $\mathcal{E}_{\{\nu_{(i,j)}\}}^\lambda$, we must have
\begin{equation*}\label{Lm:AsymptoticMarkovChainPosteriorEqualsPrior:Eq:SecondEstimate}
\begin{array}{ll}
& \left|\mathbf{P}(X_1=(i,j)|\mathcal{E}_{\{\nu_{(i,j)}\}}^\lambda , Y_1=i)-\dfrac{\nu_{(i,j)}}{\sum_{k=1}^N \nu_{(i,k)}}\right|
\smallskip
\\
\leq & \left|\mathbf{P}(X_1=(i,j)|\mathcal{E}_{\{\nu_{(i,j)}\}}^\lambda , Y_1=i)-\mathbf{1}^{j, \checkmark}_2 
\dfrac{\nu_{(i,j)}}{\sum_{k=1}^N \nu_{(i,k)}}\right|+
\left|\mathbf{1}^{j, \checkmark}_2-1\right|\cdot
\dfrac{\nu_{(i,j)}}{\sum_{k=1}^N \nu_{(i,k)}}
\\
< & \dfrac{\frac{\varepsilon}{4}}{1-\frac{\varepsilon}{4}} \leq \dfrac{\varepsilon}{2} \ ,
\end{array}
\end{equation*}
when $0<\varepsilon\leq 2$. This proves (\ref{Lm:AsymptoticMarkovChainPosteriorEqualsPrior:Eq}).
\end{proof}

\begin{remark}\rm\label{Remark:AlternateProof:Prop:MarkovChainPosteriorEqualsPrior}
As we have explained in Remark \ref{Remark:AlternateProof:Prop:IIDPosteriorEqualsPrior}, an alternate and seemingly simpler proof of the above Lemma \ref{Lm:AsymptoticMarkovChainPosteriorEqualsPrior} can be obtained by directly using Proposition \ref{Prop:MarkovChainPosteriorEqualsPrior} and Corollary \ref{Corollary:NumberOfChainsStringType}. The argument is parallel to the one used in the proof of Lemma \ref{Lm:AsymptoticConditionalSymmetry} below. We omit details here. However, the way of proof we adopt here reveals more of the underlying symmetric structure of the problem (i.e. ``conditional symmetry" at different levels, see Remark \ref{Remark:MarkovChainAsymptoticConditionalSymmetryAtLevelOfObservations}). We expect that such arguments based on symmetry are more fundamental and should be extended to more general classes of processes. 
\end{remark}

\begin{lemma}[Asymptotic of frequency events]\label{Lm:AsymptoticFrequencyEvents}
For any $\mu>0$ there exists some $n_0=n_0(\mu)\in \mathbb{N}$ such that for any $n\geq n_0$, 
there exists a family of frequency
events $\mathcal{E}_{\{\nu_{(i,j)}\}}^\lambda, \lambda\in \widetilde{\Lambda}\subseteq \Lambda$ with 
\begin{equation}\label{Lm:AsymptoticFrequencyEvents:Eq:FrequencyEventFullProb}
\mathbf{P}\left(\bigcup\limits_{\lambda\in \widetilde{\Lambda}} \mathcal{E}_{\{\nu_{(i,j)}\}}^\lambda\right)\geq 1-\mu \ ,
\end{equation}
and for each frequency event $\mathcal{E}_{\{\nu_{(i,j)}\}}^\lambda$, $\lambda\in \widetilde{\Lambda}$, its corresponding frequencies $\nu_{(i,j)}$ satisfy that for any $1\leq i,j\leq N$, 
\begin{equation}\label{Lm:AsymptoticFrequencyEvents:Eq:FrequencyIsProbability}
\left|\dfrac{\nu_{(i,j)}}{n}-\pi_i p_{ij}\right|<\mu \ ,
\end{equation}
where $\pi_i, i=1,2,...,N$ is the invariant measure of the Markov chain $\{Y_\ell\}_{\ell\geq 1}$ and $p_{ij}$ are the transition probabilities.
\end{lemma}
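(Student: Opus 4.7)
The plan is to recognize the statement as an immediate consequence of the ergodic theorem for the pair empirical distribution, once we observe that frequency events are precisely the level sets of the random vector $\{\nu_{(i,j)}\}_{1\leq i,j\leq N}$.

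First I would note that since $\nu_{(i,j)}=\sum_{\ell=1}^{n}\mathbbm{1}_{\{Y_\ell=i,\, Y_{\ell+1}=j\}}$, the ratio $\nu_{(i,j)}/n$ is exactly the empirical frequency of the one-step transition $(i,j)$ in the trajectory $Y_1,\ldots,Y_{n+1}$. Under Assumption \ref{Assmption:PositiveTransition}, the chain $\{Y_\ell\}_{\ell\geq 1}$ is irreducible and aperiodic on the finite state space $\Sigma$, so by the ergodic theorem for finite Markov chains (see, e.g., \cite[Section 1.10]{NorrisMarkovChain}), we have
$$\frac{\nu_{(i,j)}}{n}\;\xrightarrow[n\to\infty]{\text{a.s. and in probability}}\;\pi_i\, p_{ij}\qquad\text{for every }1\leq i,j\leq N,$$
regardless of the initial distribution $\pi^0$.

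Next I would define the ``good'' event
$$A_n^\mu=\Bigl\{\max_{1\leq i,j\leq N}\Bigl|\tfrac{\nu_{(i,j)}}{n}-\pi_i p_{ij}\Bigr|<\mu\Bigr\}.$$
Since $N$ is finite, the convergence above together with a union bound over the $N^2$ pairs $(i,j)$ gives $\mathbf{P}(A_n^\mu)\to 1$ as $n\to\infty$. Hence there exists $n_0=n_0(\mu)$ such that $\mathbf{P}(A_n^\mu)\geq 1-\mu$ for all $n\geq n_0$.

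Finally I would exploit the key structural observation that the $\nu$-vector is constant on each frequency event $\mathcal{E}^\lambda_{\{\nu_{(i,j)}\}}$: by construction, the event is defined by the exact values of $\{\nu_{(i,j)}\}$, so either every trajectory in $\mathcal{E}^\lambda_{\{\nu_{(i,j)}\}}$ lies in $A_n^\mu$ or none does. Setting
$$\widetilde{\Lambda}=\bigl\{\lambda\in\Lambda:\mathcal{E}^\lambda_{\{\nu_{(i,j)}\}}\subseteq A_n^\mu\bigr\},$$
we obtain $A_n^\mu=\bigcup_{\lambda\in\widetilde{\Lambda}}\mathcal{E}^\lambda_{\{\nu_{(i,j)}\}}$ (using that the frequency events partition the sample space). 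Therefore \eqref{Lm:AsymptoticFrequencyEvents:Eq:FrequencyEventFullProb} follows from $\mathbf{P}(A_n^\mu)\geq 1-\mu$, and \eqref{Lm:AsymptoticFrequencyEvents:Eq:FrequencyIsProbability} is automatic from the definition of $\widetilde{\Lambda}$. There is no genuine obstacle here beyond invoking the Markov ergodic theorem; the only mildly delicate point is the identification of the $A_n^\mu$ event with a union of whole frequency events, which is what makes the selection of $\widetilde{\Lambda}$ well defined.
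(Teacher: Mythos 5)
Your proposal is correct and follows essentially the same route as the paper: the paper likewise invokes the weak law of large numbers for ergodic Markov chains to get $\mathbf{P}\left(\left|\nu_{(i,j)}/n-\pi_i p_{ij}\right|\geq\mu\right)\leq\mu$ for $n$ large and then declares that this ``implies the statement.'' Your write-up merely makes explicit two steps the paper leaves implicit — the union bound over the $N^2$ pairs and the observation that the good event is a union of whole frequency events because the $\nu$-vector is constant on each $\mathcal{E}^{\lambda}_{\{\nu_{(i,j)}\}}$ — and it correctly uses convergence in probability rather than the strong law, consistent with the paper's own Remark on that point.
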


\begin{proof}
By our Assumption \ref{Assmption:PositiveTransition}, 
from the weak Law of Large Numbers for ergodic Markov chain (see \cite[Section 5.4]{KoralovTextbook}) we know that
for any $\mu>0$, there exists some $n_0=n_0(\mu)\in \mathbb{N}$ and for all $n\geq n_0$ we have
$$\mathbf{P}\left(\left|\dfrac{\nu_{(i,j)}}{n} - \pi_i p_{ij}\right|\geq \mu\right)\leq \mu \ ,$$
which implies the statement of the Lemma.
\end{proof}

\begin{remark}[Desired frequency events cannot be of full probability]\rm\label{Remark:Lm:AsymptoticFrequencyEvents:CannotUseStrongLLN}
It is not appropriate to conclude here that $\mathbf{P}\left(\bigcup\limits_{\lambda\in \widetilde{\Lambda}} \mathcal{E}_{\{\nu_{(i,j)}\}}^\lambda\right)=1$ by using the strong Law of Large Numbers for ergodic Markov chain (see
\cite[Section 1.10]{NorrisMarkovChain}), since in that case the threshold $n_0$ may depend on the element $\omega$ in the probability space $\Omega$, i.e. the assertion that
$$\mathbf{P}\left(\lim\limits_{n\rightarrow \infty}\dfrac{\nu_{(i,j)}}{n}=\pi_i p_{ij}\right)=1$$
implies that for some $\widehat{\Omega}\subseteq \Omega$ with $\mathbf{P}(\widehat{\Omega})=1$, for any $\mu>0$ and any $\omega\in \widehat{\Omega}$, there exists some $n_0\in \mathbb{N}$ that may depend on $\omega$, such that $\displaystyle{\left|\dfrac{\nu_{(i,j)}}{n}-\pi_i p_{ij}\right|<\mu}$.
\end{remark}

Recall that the indicator function $\mathbf{1}_\ell^{i, \checkmark}$ is defined as in Definition \ref{Def:ConditionalAdmissibleStates}, which indicates that state $i$ is conditionally admissble at $Y_\ell$ given the observed frequencies $\mathcal{E}_{\{\nu_{(i,j)}\}}$. As the number of observations $n$ tends to infinity, we have 

\begin{lemma}[All states are asymptotically conditionally admissible at $Y_2$]\label{Lm:AsymptoticIndicator}
For any $j\in \{1,2,...,N\}$ and any $1\leq \ell \leq n$, for any $\varepsilon>0$ and any $M\geq 1$,
there exist some $n_0\in \mathbb{N}$ such that 
when $n\geq n_0$, for any frequency event $\mathcal{E}_{\{\nu_{(i,j)}\}}^\lambda$ chosen from $\lambda\in \widetilde{\Lambda}$ as in \emph{Lemma \ref{Lm:AsymptoticFrequencyEvents}} with $\mu=\dfrac{\varepsilon}{M}$ and admitting $Y_1=i$, we have 
\begin{equation}\label{Lm:AsymptoticIndicator:Eq}
\mathbf{1}_2^{j, \checkmark}= 1 \ .
\end{equation}
That is to say, in the $n\rightarrow\infty$ asymptotic, all states are conditionally admissible at $Y_2$.
\end{lemma}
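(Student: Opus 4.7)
My strategy is to reduce the claim $\mathbf{1}_2^{j,\checkmark}=1$ to an Eulerian-path existence statement on a directed multigraph, and then use the density produced by Lemma \ref{Lm:AsymptoticFrequencyEvents} to make that existence automatic. For the given $j\in\{1,\ldots,N\}$, it suffices to exhibit a single chain-type string that satisfies $\mathcal{E}_{\{\nu_{(i,j)}\}}^\lambda$ and has first pair $X_1=(i,j)$, because this implies $\mathbf{P}(Y_2=j\mid \mathcal{E}_{\{\nu_{(i,j)}\}}^\lambda)>0$ and hence $\mathbf{1}_2^{j,\checkmark}=1$.

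I would encode the observed frequencies as the directed multigraph $G$ on the vertex set $\Sigma=\{1,\ldots,N\}$ with $\nu_{(a,b)}$ copies of the edge $(a,b)$. By Definition \ref{Def:StringOfChainType}, chain-type strings satisfying $\mathcal{E}_{\{\nu_{(i,j)}\}}^\lambda$ with head $i$ correspond bijectively to Eulerian paths in $G$ starting at $i$, with terminal vertex $v$ uniquely forced by the balance relation (\ref{Eq:StringOfChainTypeFrequencyBalance}) (so $v=i$ in the balanced closed-walk case). A chain-type string with first pair $(i,j)$ is therefore the same object as an Eulerian path from $j$ to $v$ in the reduced multigraph $G'=G\setminus\{(i,j)\}$, and prepending the edge $(i,j)$ to such a path produces the desired chain-type string.

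The classical Euler-path theorem for directed multigraphs now reduces the task to checking (i) the degree-balance condition and (ii) strong connectivity on $G'$. For (i), a short bookkeeping calculation shows that deleting $(i,j)$ decreases $\mathrm{out}(i)$ and $\mathrm{in}(j)$ each by one, shifting the original imbalances $\mathrm{out}(i)-\mathrm{in}(i)=1$ and $\mathrm{in}(v)-\mathrm{out}(v)=1$ so that in $G'$ the vertex $j$ has unit out-surplus, $v$ has unit in-surplus, and every other vertex stays balanced; the corner cases $j=v$, $i=j$, and $i=v$ collapse cleanly to Eulerian-circuit conditions that likewise admit the required closed walk. For (ii), Assumption \ref{Assmption:PositiveTransition} gives $\pi_a p_{ab}>0$ for every $(a,b)$, so one may refine $\widetilde{\Lambda}$ within Lemma \ref{Lm:AsymptoticFrequencyEvents} to enforce the tighter bound $|\nu_{(a,b)}/n-\pi_a p_{ab}|<\tfrac12\min_{c,d}\pi_c p_{cd}$, and then take $n_0$ large enough that $\nu_{(a,b)}\ge 2$ for every pair; removing one copy of $(i,j)$ then leaves at least one copy of every directed edge in $G'$, so $G'$ contains the complete directed graph on $\Sigma$ and is strongly connected.

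The main substantive step I expect is the strong-connectivity piece, specifically the calibration of the slack in Lemma \ref{Lm:AsymptoticFrequencyEvents} against $\min_{a,b}\pi_a p_{ab}$ so that every $\nu_{(a,b)}$ grows linearly in $n$ uniformly over $\widetilde{\Lambda}$; once that is in place, the degree-balance case analysis and the invocation of the Euler-path theorem are routine and together they deliver $\mathbf{1}_2^{j,\checkmark}=1$ for every $j$.
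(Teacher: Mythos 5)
Your proof is correct, but it takes a genuinely different route from the paper's. The paper proves this lemma by quantitative enumeration: it invokes the identity from Corollary~\ref{Corollary:NumberOfChainsStringType}, namely $N_{jv}^{(n-1)}(\mathcal{E}_{\{\widetilde{\nu}\}})=\frac{\nu_{(i,j)}}{\sum_k\nu_{(i,k)}}N_{iv}^{(n)}(\mathcal{E}_{\{\nu\}})\,\widetilde{F}^*_{vj}/F^*_{vi}$, and then shows this count is eventually positive by combining Lemma~\ref{Lm:WhittleFormulaF*F*TildeAsymptoticallySame} (the perturbed cofactor $\widetilde{F}^*_{vj}$ converges to $F^*_{vj}$) with Lemma~\ref{Lm:WhittleFormulaF*CofactorSame} (all cofactors in a row of $F^*$ coincide, so $F^*_{vj}/F^*_{vi}=1$). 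You instead bypass Whittle's formula entirely: you identify admissible strings with Eulerian paths in the transition multigraph, reduce the question to the existence of an Eulerian path in $G\setminus\{(i,j)\}$, and settle it with the classical directed Euler-path criterion after checking degree balance and strong connectivity. Your argument is more elementary and yields a bare existence statement directly, without needing the determinant machinery; it also makes explicit a point the paper elides, namely that $\mu=\varepsilon/M$ must be calibrated below $\min_{a,b}\pi_a p_{ab}$ (using Assumption~\ref{Assmption:PositiveTransition}) so that every $\nu_{(a,b)}$ grows linearly in $n$ uniformly over $\widetilde{\Lambda}$ --- the paper's phrase ``as $n$ is large, $\nu_{(i,j)}\geq 1$'' silently assumes the same calibration. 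What the paper's route buys in exchange is economy within the larger argument: the cofactor asymptotics are needed anyway for the quantitative statement in Lemma~\ref{Lm:AsymptoticConditionalSymmetry}, so the admissibility conclusion comes essentially for free once that machinery is in place, whereas your Euler-path argument, while cleaner in isolation, would not by itself deliver the ratio estimate~(\ref{Lm:AsymptoticConditionalSymmetry:Eq}).
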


\begin{proof}
Without loss of generality we shall suppress the upper-index $\lambda$ in $\mathcal{E}_{\{\nu_{(i,j)}\}}^\lambda$. Since $N_{iv}^{(n)}(\mathcal{E}_{\{\nu_{(i,j)}\}})\geq 1$, by (\ref{Corollary:NumberOfChainsStringType:Eq:ModifiedNumberAdmissiblePathsFinalForm}) in the proof of Corollary \ref{Corollary:NumberOfChainsStringType} we know that
$$N_{jv}^{(n-1)}(\mathcal{E}_{\{\widetilde{\nu}_{(\widetilde{i},\widetilde{j})}\}})=\dfrac{\nu_{(i,j)}}
{\sum\limits_{j=1}^N \nu_{(i,j)}}\cdot 
N_{iv}^{(n)}(\mathcal{E}_{\{\nu_{(i,j)}\}})\cdot 
\dfrac{\widetilde{F}_{vj}^*}{F_{vi}^*}  \ ,$$
where we use the same notations in as Corollary \ref{Corollary:NumberOfChainsStringType} and we refer to the reader for more details there. We then see that in order to prove (\ref{Lm:AsymptoticIndicator:Eq}) it suffices to show that 
$N_{jv}^{(n-1)}(\mathcal{E}_{\{\widetilde{\nu}_{(\widetilde{i},\widetilde{j})}\}})\geq 1$. 
As $n$ is large, by Lemma \ref{Lm:AsymptoticFrequencyEvents} we know that for any $1\leq i,j\leq N$ we must have $\nu_{(i,j)}\geq 1$. By Lemma \ref{Lm:WhittleFormulaF*F*TildeAsymptoticallySame} we know that for any $1\leq j\leq N$ we have $\widetilde{F}_{vj}^*\rightarrow F^*_{vj}$ as $n\rightarrow\infty$. By (\ref{Lm:AsymptoticFrequencyEvents:Eq:FrequencyIsProbability}) we know that the assumption of Lemma \ref{Lm:WhittleFormulaF*CofactorSame} is satisfied, and thus we know that 
$\dfrac{F_{vj}^*}{F_{vi}^*}=1$. These facts imply that $N_{jv}^{(n-1)}(\mathcal{E}_{\{\widetilde{\nu}_{(\widetilde{i},\widetilde{j})}\}})\geq 1$ when $n$ is large, and thus (\ref{Lm:AsymptoticIndicator:Eq}) is proved. 
\end{proof}

Recall in Remark \ref{Remark:MarkovChainProofIllustratesConditionalSymmetry} we have explained that the Fact 4' in the proof of Proposition \ref{Prop:MarkovChainPosteriorEqualsPrior} is different from the ``conditional symmetry" as the Fact 4 of the proof of Theorem \ref{Thm:IIDPosteriorEqualsPrior}. Here we show that as $n\rightarrow \infty$, Fact 4' will asymptotically become ``conditionally symmetric" and thus it becomes a version of Fact 4 stated in the proof of Theorem \ref{Thm:IIDPosteriorEqualsPrior}. Using the same notations as in the proof of Proposition \ref{Prop:MarkovChainPosteriorEqualsPrior}, we have

\begin{lemma}[Asymptotic conditional symmetry]\label{Lm:AsymptoticConditionalSymmetry}
For any $\varepsilon>0$, there exist some $M\geq 1$ and some $n_0\in \mathbb{N}$ such that when $n\geq n_0$, for any 
of the frequency events $\mathcal{E}_{\{\nu_{(i,j)}\}}^\lambda$ chosen from $\lambda\in \widetilde{\Lambda}$ as in \emph{Lemma \ref{Lm:AsymptoticFrequencyEvents}} with $\mu=\dfrac{\varepsilon}{M}$ and admitting $Y_1=i\in \{1,2,...,N\}$, we have

\begin{itemize}
\item[(a)] For any $j\in \{1,2,...,N\}$ and any $1\leq a\leq \nu_{(i,j)}$, 
\emph{$$\text{Number of all possible } \mathscr{S} \text{ whose 1-st element is } ((i,j),a)\geq 1 \ ;$$}
\item[(b)] For any two elements \emph{$((i,j_1), a_1)$, $((i, j_2), a_2)\in \mathcal{X}^{\text{distinguished}}$} with $j_1\neq j_2$, $j_{1,2}\in \{1,2,...,N\}$ and $1\leq a_1\leq \nu_{(i, j_1)}, 1\leq a_2\leq \nu_{(i, j_2)}$, we have
\emph{\begin{equation}\label{Lm:AsymptoticConditionalSymmetry:Eq}
\left|\dfrac{\text{Number of all possible } \mathscr{S} \text{ whose 1-st element is } ((i,j_1),a_1)}
{\text{Number of all possible } \mathscr{S} \text{ whose 1-st element is } ((i,j_2),a_2)}- 1\right|< \dfrac{\varepsilon}{4} \ .
\end{equation}}
\end{itemize}
\end{lemma}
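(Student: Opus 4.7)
The plan is to reduce both parts of the lemma to combinatorial identities already established in the proof of Proposition \ref{Prop:MarkovChainPosteriorEqualsPrior} and in Corollary \ref{Corollary:NumberOfChainsStringType}, and then to extract the asymptotics from the ergodicity assumption via the auxiliary lemmas. First, I would record the closed-form expression obtained in Fact 4' of the proof of Proposition \ref{Prop:MarkovChainPosteriorEqualsPrior}: for any distinguished element $((i,j),a)$ with $i$ fixed,
\[
N((i,j),a) := \#\{\mathscr{S}:\mathfrak{Y}(\mathscr{S})\text{ has head }i\text{ and 1-st element }(i,j)\} = \frac{\prod_{\iota=1}^I \nu_{(i_\iota,j_\iota)}!}{\nu_{(i,j)}}\cdot \mathbf{1}^{j,\checkmark}_2\cdot \text{\#}_1^{(i,j)}(\mathcal{E}_{\{\nu_{(i,j)}\}}^\lambda),
\]
which is independent of $a$. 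Then, inserting the Whittle-based formula from Corollary \ref{Corollary:NumberOfChainsStringType} for $\text{\#}_1^{(i,j)}(\mathcal{E}_{\{\nu_{(i,j)}\}}^\lambda)$, the factor $\nu_{(i,j)}$ cancels and leaves
\[
N((i,j),a) = \frac{\prod_{\iota=1}^I \nu_{(i_\iota,j_\iota)}!}{\sum_{k=1}^N \nu_{(i,k)}}\cdot N_{iv}^{(n)}(\mathcal{E}_{\{\nu_{(i,j)}\}}^\lambda)\cdot \mathbf{1}^{j,\checkmark}_2\cdot \frac{\widetilde{F}^{*,(i,j)}_{vj}}{F^*_{vi}},
\]
where I write $\widetilde{F}^{*,(i,j)}$ to emphasize that the tilde-matrix depends on the choice of $(i,j)$.

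For part (b), I would take the ratio $N((i,j_1),a_1)/N((i,j_2),a_2)$. All prefactors in the above display depend only on $i$ and on $\mathcal{E}_{\{\nu_{(i,j)}\}}^\lambda$ (not on $j$), so they cancel, leaving
\[
\frac{N((i,j_1),a_1)}{N((i,j_2),a_2)} = \frac{\mathbf{1}^{j_1,\checkmark}_2}{\mathbf{1}^{j_2,\checkmark}_2}\cdot \frac{\widetilde{F}^{*,(i,j_1)}_{vj_1}}{\widetilde{F}^{*,(i,j_2)}_{vj_2}}.
\]
Applying Lemma \ref{Lm:AsymptoticIndicator}, the indicator ratio equals $1$ for every frequency event in the family $\widetilde{\Lambda}$ (for $M$ large enough). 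For the cofactor ratio I would invoke Lemma \ref{Lm:WhittleFormulaF*F*TildeAsymptoticallySame} to conclude that $\widetilde{F}^{*,(i,j_k)}_{vj_k}\to F^*_{vj_k}$ as $n\to\infty$ (the perturbed matrix differs from $F^*$ by entries of order $1/n$ on a single row), and then Lemma \ref{Lm:WhittleFormulaF*CofactorSame}, whose hypothesis $\sum_k \nu_{(v,k)}>0$ is guaranteed by Lemma \ref{Lm:AsymptoticFrequencyEvents} together with Assumption \ref{Assmption:PositiveTransition}, to equate $F^*_{vj_1}=F^*_{vj_2}$. Quantitatively, given $\varepsilon>0$, one chooses $M$ large enough that $\mu=\varepsilon/M$ makes the row-perturbation of order $1/n$ translate into a cofactor error less than $\varepsilon/4$ uniformly, which yields the estimate \eqref{Lm:AsymptoticConditionalSymmetry:Eq}.

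For part (a), the same identity shows $N((i,j),a)\geq 1$ once $\nu_{(i,j)}\geq 1$, $\mathbf{1}^{j,\checkmark}_2=1$, $N_{iv}^{(n)}(\mathcal{E}_{\{\nu_{(i,j)}\}}^\lambda)\geq 1$, and $\widetilde{F}^{*,(i,j)}_{vj}/F^*_{vi}>0$. The first follows from Lemma \ref{Lm:AsymptoticFrequencyEvents} since $\pi_i p_{ij}>0$; the second is Lemma \ref{Lm:AsymptoticIndicator}; the third is immediate from the hypothesis that $\mathcal{E}_{\{\nu_{(i,j)}\}}^\lambda$ admits $Y_1=i$ (so some chain-type string with head $i$ exists and is counted by $N_{iv}^{(n)}$); the last positivity follows from the previous paragraph, since $F^*_{vi}$ converges to a strictly positive common cofactor value of $I-P$ (nonzero by ergodicity and the matrix-tree formula, equivalently by $F^*_{vi}=F^*_{vj_1}>0$ in the limit).

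The main obstacle will be obtaining the cofactor comparison uniformly in $\lambda\in\widetilde{\Lambda}$: the two cofactors $\widetilde{F}^{*,(i,j_1)}_{vj_1}$ and $\widetilde{F}^{*,(i,j_2)}_{vj_2}$ sit inside different perturbed matrices, so simply invoking pointwise convergence is not enough. I would handle this by observing that, within the family $\widetilde{\Lambda}$, every ratio $\nu_{(i,k)}/n$ lies in the fixed compact neighborhood $[\pi_i p_{ik}-\mu,\pi_i p_{ik}+\mu]$ bounded away from $0$, so each entry of $\widetilde{F}^{*,(i,j)}$ is a smooth function of these ratios whose Lipschitz constant is uniformly bounded; then the determinantal cofactor is also uniformly Lipschitz, and $\mu=\varepsilon/M$ can be chosen to make both cofactors within $\varepsilon/8$ of the common limit $F^*_{vj}$ (which itself is strictly positive and identical for different $j$). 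Dividing then gives the required bound.
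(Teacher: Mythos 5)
Your proposal is correct and follows essentially the same route as the paper's proof: express the count via Fact 4', substitute the Whittle-based formula of Corollary \ref{Corollary:NumberOfChainsStringType} so that the ratio in part (b) reduces to the indicator ratio times the cofactor ratio $\widetilde{F}^*_{vj_1}/\widetilde{F}^*_{vj_2}$, and then combine Lemma \ref{Lm:AsymptoticIndicator}, Lemma \ref{Lm:WhittleFormulaF*F*TildeAsymptoticallySame} and Lemma \ref{Lm:WhittleFormulaF*CofactorSame} to send this to $1$. Your closing observation about making the cofactor comparison uniform over $\lambda\in\widetilde{\Lambda}$ (via the uniform Lipschitz dependence of cofactors on the frequency ratios confined to a compact set bounded away from degeneracy) is a point the paper passes over silently, and is a worthwhile refinement rather than a deviation.
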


\begin{proof}
We make use of Fact 4' in the proof of Proposition \ref{Prop:MarkovChainPosteriorEqualsPrior}, so that for each element $((i,j),a)\in \mathcal{X}^{\text{distinguished}}$ with $i$ being fixed, 
$$\begin{array}{ll}
&\left(\text{Number of all possible } \mathscr{S} \text{ whose 1-st element is } ((i,j),a)\right)
\\
=&\dfrac{\nu_{(i_1, j_1)}! \, ... \, \nu_{(i_I, j_I)}!}{\nu_{(i,j)}}\cdot \mathbf{1}^{j,\checkmark}_2\cdot\text{\#}_1^{(i,j)}(\mathcal{E}^\lambda_{\{\nu_{(i,j)}\}}) \ .
\end{array}$$ 

Since $Y_1=i\in \{1,2,...,N\}$ is admitted by the frequency event $\mathcal{E}^\lambda_{\{\nu_{(i,j)}\}}$, we know that ${\#}_1^{(i,j)}(\mathcal{E}^\lambda_{\{\nu_{(i,j)}\}})\geq 1$. By Lemma \ref{Lm:AsymptoticIndicator} we know that when $n$ is large, $\mathbf{1}_2^{j,\checkmark}=1$. Thus part (a) is proved. 

For part (b), by (\ref{Corollary:NumberOfChainsStringType:Eq}) in Corollary \ref{Corollary:NumberOfChainsStringType}, we know that
$$\text{\#}^{(i,j)}_1(\mathcal{E}_{\{\nu_{(i,j)}\}})=\dfrac{\nu_{(i,j)}}{\sum\limits_{j=1}^N \nu_{(i,j)}}\cdot 
N_{iv}^{(n)}(\mathcal{E}_{\{\nu_{(i,j)}\}})\cdot 
\dfrac{\widetilde{F}_{vj}^*}{F_{vi}^*} \ .$$
Therefore
$$\begin{array}{ll}
& \left(\text{Number of all possible } \mathscr{S} \text{ whose 1-st element is } ((i,j),a)\right)
\\
= & \dfrac{\nu_{(i_1, j_1)}! \, ... \, \nu_{(i_I, j_I)}!}{\nu_{(i,j)}}\cdot \mathbf{1}^{j,\checkmark}_2\cdot
\dfrac{\nu_{(i,j)}}{\sum\limits_{j=1}^N \nu_{(i,j)}}\cdot 
N_{iv}^{(n)}(\mathcal{E}_{\{\nu_{(i,j)}\}})\cdot 
\dfrac{\widetilde{F}_{vj}^*}{F_{vi}^*}
\\
= & \dfrac{\nu_{(i_1, j_1)}! \, ... \, \nu_{(i_I, j_I)}!}{{\sum\limits_{j=1}^N \nu_{(i,j)}}} \cdot 
\mathbf{1}^{j,\checkmark}_2 \cdot N_{iv}^{(n)}(\mathcal{E}_{\{\nu_{(i,j)}\}})\cdot 
\dfrac{\widetilde{F}_{vj}^*}{F_{vi}^*} \ .
\end{array}$$ 
Here the matrices $F^*=(\nu_{ij}^*)_{1\leq i, j\leq N}$ and $\widetilde{F}^*=(\widetilde{\nu}^*_{\widetilde{i} \, \widetilde{j}})_{1\leq \widetilde{i}, \widetilde{j}\leq N}$ are defined in (\ref{Eq:WhittleFormulaF*}) and (\ref{Eq:WhittleFormulaF*Tilde}); and $\widetilde{F}_{vj}^*$, $F_{vi}^*$ are the $(v,j)$-th and $(v,i)$-th cofactor of these two matrices, respectively. 

Thus we have
\begin{equation}\label{Lm:AsymptoticConditionalSymmetry:Eq:Ratio}
\begin{array}{ll}
& \dfrac{\text{Number of all possible } \mathscr{S} \text{ whose 1-st element is } ((i,j_1),a_1)}
{\text{Number of all possible } \mathscr{S} \text{ whose 1-st element is } ((i,j_2),a_2)}
\\
= & \dfrac{\mathbf{1}^{j_1,\checkmark}_2\cdot  N_{iv}^{(n)}(\mathcal{E}_{\{\nu_{(i,j)}\}})\cdot 
\dfrac{\widetilde{F}_{vj_1}^*}{F_{vi}^*}}
{\mathbf{1}^{j_2,\checkmark}_2\cdot 
N_{iv}^{(n)}(\mathcal{E}_{\{\nu_{(i,j)}\}})\cdot 
\dfrac{\widetilde{F}_{vj_2}^*}{F_{vi}^*}} \ .
\end{array}
\end{equation}
By Lemma \ref{Lm:WhittleFormulaF*F*TildeAsymptoticallySame} we know that for any $1\leq j\leq N$ we have $\widetilde{F}_{vj}^*\rightarrow F^*_{vj}$ as $n\rightarrow\infty$. By (\ref{Lm:AsymptoticFrequencyEvents:Eq:FrequencyIsProbability}) we know that the assumption of Lemma \ref{Lm:WhittleFormulaF*CofactorSame} is satisfied, and thus we know that 
$\dfrac{F_{vj}^*}{F_{vi}^*}=1$. These facts together with (\ref{Lm:AsymptoticConditionalSymmetry:Eq:Ratio}) and Lemma \ref{Lm:AsymptoticIndicator} imply (\ref{Lm:AsymptoticConditionalSymmetry:Eq}). So we have also proved part (b).
\end{proof}

\begin{remark}[Asymptotic Conditional Symmetry at the level of observations of one-step transitions]\rm\label{Remark:MarkovChainAsymptoticConditionalSymmetryAtLevelOfObservations}
From Lemma \ref{Lm:AsymptoticConditionalSymmetry} we see that, as the number of observations $n$ tends to infinity, a higher level of symmetry is manifested at the fact that the numbers of admissible trajectories starting from different initial one-step transitions tend to be evenly distributed. From here, the ``conditional symmetry" at the level of sample path trajectories as shown in Fact 4' of the proof of Proposition \ref{Prop:MarkovChainPosteriorEqualsPrior} is reduced to the ``conditional symmetry" at the level of observations of one-step transitions. The latter level of symmetry is essentially the same as Fact 4 in the proof of Theorem \ref{Thm:IIDPosteriorEqualsPrior} for the i.i.d. case. By this reason, when $n\rightarrow \infty$, the ergodic finite Markov chain case should have the same kind of posterior distribution as the i.i.d case, with the only difference of replacing the frequencies of state occurrences by the frequencies of state transitions. This is exactly what Lemma \ref{Lm:AsymptoticMarkovChainPosteriorEqualsPrior} indicates.  
\end{remark}

Recall that we have defined the matrices $F^*=(\nu_{ij}^*)_{1\leq i, j\leq N}$ and $\widetilde{F}^*=(\widetilde{\nu}^*_{\widetilde{i} \, \widetilde{j}})_{1\leq \widetilde{i}, \widetilde{j}\leq N}$ in (\ref{Eq:WhittleFormulaF*}) and (\ref{Eq:WhittleFormulaF*Tilde}), respectively. The following lemma shows that under Assumption \ref{Assmption:PositiveTransition}, these two matrices have asymptotically the same elements.

\begin{lemma}[Asymptotic of the matrices $F^*$ and $\widetilde{F}^*$]\label{Lm:WhittleFormulaF*F*TildeAsymptoticallySame}
For the given $1\leq i,j\leq N$ that are used to define $\widetilde{F}^*=(\widetilde{\nu}^*_{\widetilde{i} \, \widetilde{j}})_{1\leq \widetilde{i}, \widetilde{j}\leq N}$ in \emph{(\ref{Eq:WhittleFormulaF*Tilde})}, assume that $\nu_{(i,j)}\geq 1$ in $\mathcal{E}_{\{\nu_{(i,j)}\}}$ and the \emph{Assumption \ref{Assmption:PositiveTransition}} holds. Then for any $\varepsilon>0$ there exists some $n_0\geq 1$ such that when $n\geq n_0$ and for any pair $(k,l), 1\leq k, l\leq N$ we have 
\begin{equation}\label{Lm:WhittleFormulaF*F*TildeAsymptoticallySame:Eq}
\left|\widetilde{\nu}^*_{kl}-\nu^*_{kl}\right|<\varepsilon \ .
\end{equation}
\end{lemma}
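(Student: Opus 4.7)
The plan is to reduce the claim to a row-by-row comparison of the two matrices, observing that $\widetilde{F}^*$ differs from $F^*$ only in row $i$ (the row indexed by the ``removed'' starting pair). Indeed, inspecting the definition (\ref{Eq:WhittleFormulaF*Tilde}), whenever $\widetilde{i}\neq i$ the entry $\widetilde{\nu}^*_{\widetilde{i}\,\widetilde{j}}$ is computed by precisely the same formula as $\nu^*_{\widetilde{i}\,\widetilde{j}}$ in (\ref{Eq:WhittleFormulaF*}), so for such rows we have the exact equality $\widetilde{\nu}^*_{\widetilde{i}\,\widetilde{j}}=\nu^*_{\widetilde{i}\,\widetilde{j}}$ and no estimate is required. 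It therefore suffices to handle the single row $\widetilde{i}=i$.

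For the row $\widetilde{i}=i$, write $b=\sum_{k=1}^N \nu_{(i,k)}$. Under Assumption \ref{Assmption:PositiveTransition}, by the weak ergodic theorem for Markov chains (exactly as invoked in the proof of Lemma \ref{Lm:AsymptoticFrequencyEvents}), for any $\delta>0$ one can restrict attention to frequency events $\mathcal{E}_{\{\nu_{(i,j)}\}}$ on which $\nu_{(i,k)}/n$ is within $\delta$ of $\pi_i p_{ik}>0$ for every $k$; in particular $b\geq \tfrac{1}{2}\pi_i n$ once $n$ is sufficiently large, and hence we are in the first two cases of (\ref{Eq:WhittleFormulaF*Tilde}) (where $b>1$ and $\nu_{(i,j)}\geq 1$). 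In this regime the comparison is a direct algebraic computation: for $\widetilde{j}=j$,
\begin{equation*}
\bigl|\widetilde{\nu}^*_{ij}-\nu^*_{ij}\bigr|
=\left|\frac{\nu_{(i,j)}}{b}-\frac{\nu_{(i,j)}-1}{b-1}\right|
=\frac{|b-\nu_{(i,j)}|}{b(b-1)}\leq \frac{1}{b-1},
\end{equation*}
and for $\widetilde{j}\neq j$,
\begin{equation*}
\bigl|\widetilde{\nu}^*_{i\widetilde{j}}-\nu^*_{i\widetilde{j}}\bigr|
=\nu_{(i,\widetilde{j})}\left|\frac{1}{b}-\frac{1}{b-1}\right|
=\frac{\nu_{(i,\widetilde{j})}}{b(b-1)}\leq \frac{1}{b-1}.
\end{equation*}

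Combining the two displays, every entry in row $i$ is off by at most $1/(b-1)$. Since $b\geq \tfrac12 \pi_i n \to\infty$, for any prescribed $\varepsilon>0$ we can choose $n_0=n_0(\varepsilon)$ so that $1/(b-1)<\varepsilon$ whenever $n\geq n_0$, which together with the exact equality on the other rows gives (\ref{Lm:WhittleFormulaF*F*TildeAsymptoticallySame:Eq}) uniformly in $(k,l)$.

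The only subtlety, and what I expect to be the main obstacle in writing this cleanly, is the dependence on the specific frequency event. If one interpreted the statement for an arbitrary admissible $\mathcal{E}_{\{\nu_{(i,j)}\}}$ with merely $\nu_{(i,j)}\geq 1$, the degenerate case $b=1$ would make the bound fail; the bound forces $b\to\infty$. Under Assumption \ref{Assmption:PositiveTransition} and restricting as above to frequency events arising in $\widetilde{\Lambda}$ (the way this lemma is actually invoked in the proofs of Lemmas \ref{Lm:AsymptoticIndicator} and \ref{Lm:AsymptoticConditionalSymmetry}), this degeneracy is ruled out for $n$ large, and the elementary estimates above close the argument.
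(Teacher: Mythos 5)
Your proposal is correct and follows essentially the same route as the paper's own proof: reduce to the single row $\widetilde{i}=i$ (the other rows agree exactly), carry out the identical algebraic comparison yielding the bound $1/(b-1)$ with $b=\sum_{k}\nu_{(i,k)}$, and invoke the frequency asymptotics of Lemma \ref{Lm:AsymptoticFrequencyEvents} to force $b\to\infty$. Your closing remark about the implicit restriction to frequency events in $\widetilde{\Lambda}$ is a fair observation — the paper's proof relies on the same restriction in its final step without making it explicit in the lemma statement.
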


\begin{proof}
According to (\ref{Eq:WhittleFormulaF*Tilde}), if $k\neq i$, then we actually have $\widetilde{\nu}^*_{kl}=\nu^*_{kl}$, so (\ref{Lm:WhittleFormulaF*F*TildeAsymptoticallySame:Eq}) is automatically true. Now suppose $k=i$, then since $\nu_{(i,j)}\geq 1$, we must have $\sum\limits_{l=1}^N \nu_{(i, l)}>0$. Thus according to (\ref{Eq:WhittleFormulaF*}), for each $1\leq l\leq N$ we have 
\begin{equation}\label{Lm:WhittleFormulaF*F*TildeAsymptoticallySame:Eq:FormulaForNu}
\nu^*_{il}=\mathbf{1}_{\{i=l\}}-\dfrac{\nu_{(i,l)}}{\sum\limits_{l=1}^N \nu_{(i,l)}} \ .
\end{equation}
Without loss of generality we can assume that $n$ is large, so $\sum\limits_{l=1}^{N} \nu_{(i,l)}=1 \ , \ \nu_{(i,j)}= 1$ will not happen, because otherwise we must have $n=1$. Thus we know from (\ref{Eq:WhittleFormulaF*Tilde}) that 
\begin{equation}\label{Lm:WhittleFormulaF*F*TildeAsymptoticallySame:Eq:FormulaForNu*}
\widetilde{\nu}^*_{il}=\left\{
\begin{array}{ll}
\mathbf{1}_{\{i=j\}}-\dfrac{\nu_{(i,j)}-1}{\sum\limits_{l=1}^{N} \nu_{(i,l)}-1} \ , & 
\text{ if } l=j \ ,
\\
\mathbf{1}_{\{i=l\}}-\dfrac{\nu_{(i,l)}}{\sum\limits_{l=1}^{N} \nu_{(i,l)}-1} \ , &
\text{ if } l\neq j \ .
\end{array}
\right.
\end{equation}
From (\ref{Lm:WhittleFormulaF*F*TildeAsymptoticallySame:Eq:FormulaForNu*}) and (\ref{Lm:WhittleFormulaF*F*TildeAsymptoticallySame:Eq:FormulaForNu}) we can calculate that when $l=j$, we have
$$|\nu^*_{il}-\widetilde{\nu}^*_{il}|=\left|\dfrac{\nu_{(i,j)}}{\sum\limits_{l=1}^N \nu_{(i,l)}}
- \dfrac{\nu_{(i,j)}-1}{\sum\limits_{l=1}^{N} \nu_{(i,l)}-1}\right|
=\dfrac{\sum\limits_{l=1}^N \nu_{(i,l)}-\nu_{(i,j)}}{\left(\sum\limits_{l=1}^N \nu_{(i,l)}\right)\left(\sum\limits_{l=1}^{N} \nu_{(i,l)}-1\right)}< \dfrac{1}{\sum\limits_{l=1}^{N} \nu_{(i,l)}-1} \ ,$$
and when $l\neq j$ we have
$$|\nu^*_{il}-\widetilde{\nu}^*_{il}|=\left|\dfrac{\nu_{(i,l)}}{\sum\limits_{l=1}^N \nu_{(i,l)}}
- \dfrac{\nu_{(i,l)}}{\sum\limits_{l=1}^{N} \nu_{(i,l)}-1}\right|
=\dfrac{\nu_{(i,l)}}{\left(\sum\limits_{l=1}^N \nu_{(i,l)}\right)\left(\sum\limits_{l=1}^{N} \nu_{(i,l)}-1\right)}< \dfrac{1}{\sum\limits_{l=1}^{N} \nu_{(i,l)}-1} \ .$$
Thus in order that (\ref{Lm:WhittleFormulaF*F*TildeAsymptoticallySame:Eq}) holds, it suffices to have 
$$\sum\limits_{l=1}^{N} \nu_{(i,l)}-1>\dfrac{1}{\varepsilon} \ ,$$
which is a result of (\ref{Lm:AsymptoticFrequencyEvents:Eq:FrequencyIsProbability}) in Lemma \ref{Lm:AsymptoticFrequencyEvents}. 
\end{proof}

\section{Discussion, Remarks, and Generalization}
\label{Sec:discussions}

\subsection{Koopman-Damois exponential family of models and maximum entropy principle}

Recent studies on applying the probability theory of large deviations to nanothermodynamics 
\cite{hill-book,lu-qian-20,cyq-21,time-event-thermo-I} have shed considerable new light on the nature of Gibbs' equilibrium theory of statistical mechanics and statistical chemistry: The former introduces the notion of statistical ensemble for the probability of energy and system's volume, and the latter generalizes the method to counting the number of atoms and molecules. It is clear now that Gibbs' theory, in fact the entire theory of thermodynamics proper, is an approach that combines Bayesian statistical inference and limit theorems by formulating a {\em posterior probability} for a representative member in a large system that is conditioned on a limit law.  In fact, each specific limit law also implies a family of probabilistic models with parameters (conjugate variables) that can be determined by the empirical observation as sufficient statistics.  The last aspect of thermodynamics is precisely the {\em phenomenological thermodynamics that accounts for fluctuations} first proposed by L. Szilard in 1925 \cite{szilard} and further developed by B. Mandelbrot \cite{mandelbrot_1962}, in terms of the Koopman-Damois (KD), also known as exponential, family of models \cite{koopman_1936,jeffreys_1960}.  The sufficient statistics also has a deep logic relation to the Maximum Entropy Principle (MEP) which uniquely determines the posterior distribution based on idealized ``date'', as elucidated in \cite{vanCampenhout_1981}.

The KD family is a consequence of the large-deviation posterior inference, via exponential tilting \cite{barato_2015,touchette_2015}.  The present work further shows that such a factorized form is valid even for posterior conditioned on finite observations.  More specifically, one has the measure-theoretic formulation
\begin{subequations}
\label{eqn17}
\begin{eqnarray}
     \mathbf{P}\big( \rd x \big| \hat{\theta} \big)
      &=& \exp\big[\eta\big(\hat{\theta}\big)g(x)
        - \Lambda\big(\hat{\theta}\big) \big] \mu(\rd x)
\\
      &\asymp& \exp\big[I\big(g(x)\big)\big]\mu(\rd x),
\\
       \Lambda(\theta) &=& \sup_{g}
        \big\{\eta(\theta) g -I(g)\big\},
\\
       \hat{\theta}\big(g^*\big) &=& \arg\sup_{\theta} \big\{ \eta(\theta) g^* -\Lambda(\theta)\big\}.
\end{eqnarray}
\end{subequations}
w.r.t. a reference measure $\mu$.  In (\ref{eqn17}), $\eta$ is the {\em conjugate variable} to the random variable $g(x)$, its cumulant generating function $\Lambda(g)$ is the Legendre-Fenchel transform of the LDRF $I(g)$, and $g^*$ is the empirical arithmetic mean value of observing $g$.  In classical thermodynamics, 
$I(g)$ and $\Lambda(\hat{\theta})$ are identified
with the Gibbs entropy and the Helmholtz free energy, respectively, when $\eta$ is temperature and $g$ is internal energy.  Gibbs' statistical mechanics approach with explicit consideration of atoms and molecules and Szilard-Mandelbrot's statistical inference approach to phenomenological thermodynamics including fluctuations, thus, are now unified under the Probability Theory. 

Indeed, contraction principle in the theory of large deviations consists of three parts: It provides a mathematical justification for a new, lower level LDP and it supplies a method for computing the new, corresponding LDRF in terms of a constrained optimization of the entropy function at hand.  The very contraction principle also predicts a posterior probability conditioned on the constrain; this is the central idea of Gibbs conditioning \cite{DemboZeitouniLDPBook}.  The last aspect is the statistical foundation of MEP. Alternatively stated, maximizing the entropy function under an asymptotic constraint defines a {\em conditional law} on a set of infinitely large samples, independent or correlated.  The Gibbs conditioning problem associated with Sanov's LDP for i.i.d. sample frequencies was carefully studied by van Campenhout and Cover \cite{vanCampenhout_1981}.  This version of the MEP has been applied to a wide range of problems in recent years \cite{dill_rmp}.

\subsection{A conjecture for continuous-time Markov Chains}

Let $\{Y_t\}_{t\geq 0}$ be a continuous-time Markov process with finite state space $\Sigma=\{1,2,...,N\}$ and $Q$-matrix $Q=(q_{ij})_{1\leq i,j\leq N}$ and where $q_{ii}=-\sum\limits_{j\in \{1,2,...,N\}\backslash\{i\}}q_{ij}$. Suppose we observed the process $Y_t$ during time $t\in[0,T] \ , \ T>0$. For each pairs of states $i,j\in \{1,2,...,N\}$ we record the number of jumps from $i$ to $j$ up to time $T$, and we denote it by $\nu^{[0,T]}_{(i,j)}$. Let the integer $N^{[0,T]}_{i}\geq 0$ be the number of visits of the process $\{Y_t\}_{t\geq 0}$ to state $i\in \{1,2,...,N\}$ up to time $T>0$. We record the empirical occupation times
$$m^{[0,T]}_i\equiv \dfrac{1}{N^{[0,T]}_i}\int_0^T \mathbf{1}_{\{Y_t=i\}}dt \ , \ i=1,2,...,N \ .$$
Note that if $N^{[0,T]}_i=0$, then we must have $\displaystyle{\int_0^T \mathbf{1}_{\{Y_t=i\}}dt}=0$ and for any $j\in \{1,2,...,N\}$ we have $\nu^{[0,T]}_{(i,j)}=\nu^{[0,T]}_{(j,i)}=0$. In this case, by convention, we will set $m_i^{[0,T]}=+\infty$. 

Given the frequency event
$$\mathcal{E}_{\{\nu^{[0,T]}_{(i,j)}\}}\equiv \{\nu^{[0,T]}_{(i,j)}, 1\leq i,j\leq N\} \ ,$$
such that
$$
\sum\limits_{j=1}^{N} \nu^{[0,T]}_{(i,j)} - \sum\limits_{j=1}^{N} \nu^{[0,T]}_{(j,i)} 
= \mathbf{1}_{\{i=i_1\}}-\mathbf{1}_{\{i=i_{n+1}\}} \ ,
$$
we consider all length-$n$ strings of chain type
$$X_1 = (i_1,i_2) , X_2 = (i_2,i_3) , ... , X_n = (i_n, i_{n+1}) \ ,$$
satisfying $\mathcal{E}_{\{\nu_{(i,j)}^{[0,T]}\}}$ and such that $n=\sum\limits_{i,j=1}^N \nu^{[0,T]}_{(i,j)}$. 

Given an $(i,j)$ such that $\nu^{[0,T]}_{(i,j)}\geq 1$ on the event $\mathcal{E}_{\{\nu^{[0,T]}_{(i,j)}\}}$, we define by $\text{\#}^{(i,j)}_\ell(\mathcal{E}_{\{\nu_{(i,j)}\}})$ to be the 
number of different strings of chain type $X_1=(i_1,i_2),...,X_n=(i_{n-1}, i_n)$ 
with the $\ell$-th element being $X_\ell=(i,j)$, and satisfying $\mathcal{E}_{\{\nu^{[0,T]}_{(i,j)}\}}$. Let $\mathbf{1}_\ell^{i, \checkmark}$ be the indicator function which is $1$ if there is at least one string of chain type $X_1=(i_1,i_2),...,X_n=(i_{n-1}, i_n)$ satisfying $\mathcal{E}_{\{\nu^{[0,T]}_{(i,j)}\}}$ with $X_\ell=(i, *)$, and it is $0$ otherwise. With all these, we conjecture that

\,

\noindent \textbf{Conjecture.}
Conditioned on $\mathcal{E}_{\{\nu^{[0,T]}_{(i,j)}\}}$ and $m^{[0,T]}_i$ ($i=1,2,...,N$), we have 
$$\lim\limits_{\Delta t\rightarrow 0}\dfrac{1}{\Delta t}
\mathbf{P}\left(\left.Y_{\Delta t}=j\right|\mathcal{E}_{\{\nu^{[0,T]}_{(i,j)}\}}, m_i^{[0,T]}, Y_0=i\right)
=\left\{
\begin{array}{rl}
\mathbf{1}^{i, \checkmark}_1 \cdot q_{ij}^{[0,T]}   \ , & \text{ if } j\neq i \ ;
\\
\\
-\mathbf{1}^{i, \checkmark}_1 \cdot \dfrac{1}{\widehat{m}_i^{[0,T]}} \ , & \text{ if } j=i \ .
\end{array}
\right.$$
Here for $j\neq i$ we have 
$$q_{ij}^{[0,T]}=
\dfrac{1}{\widehat{m}_i^{[0,T]}}
\cdot \dfrac{\mathbf{1}^{j, \checkmark}_2 \cdot \text{\#}^{(i,j)}_1(\mathcal{E}_{\{\nu^T_{(i,j)}\}})}
{\sum_{k_2=1}^N \mathbf{1}^{k_2, \checkmark}_2 \cdot 
\text{\#}^{(i,k_2)}_1(\mathcal{E}_{\{\nu^T_{(i,j)}\}})} \ .$$
As time $T\rightarrow\infty$, we further expect that
$$\lim\limits_{T\rightarrow\infty}\left|\widehat{m}_i^{[0,T]}-m_i^{[0,T]}\right|=0 \ , \ 
\lim\limits_{T\rightarrow\infty} m_i^{[0,T]}=m_i \ , \ 
\lim\limits_{T\rightarrow\infty} q_{ij}^{[0,T]} =q_{ij} \ .$$

\,

We expect to prove the above conjecture using similar arguments as we did in Sections \ref{Sec:MarkovChain}-\ref{Sec:InfiniteTimeLimitMarkovChain} of this paper.

\ 

\noindent{\textbf{Acknowledgements.}} WH would like to thank Louis Waitong Fan, Hao Ge, Yong Liu, Hui He and Yin Ouyang for reading an earlier version of the manuscript and helpful comments, and financial support from the Simons Foundation. HQ acknowledges Chris Burdzy and Jian-Sheng Xie for discussions and the Olga Jung Wan Endowed Professorship.

\bibliographystyle{plain}
\bibliography{bibliography_ProcessConditioned}

\end{document}